\documentclass{amsart}
\usepackage{
    amsmath,
    amssymb,
    relsize,
    color,
    cprotect,
    floatrow,
    graphicx,
    hyperref,
    url,
    wrapfig,
    subcaption,
    tikz,
    ifthen,
    calc,
    MnSymbol,
    multicol
}
\usetikzlibrary{math,calc}

\def\N{\mathbb{N}}
\def\Q{\mathbb{Q}}

\def\R{\mathbb{R}}

\newtheorem{theorem}{\bf Theorem}[section]
\newtheorem*{theorem*}{\bf Theorem}

\newtheorem{lemma}[theorem]{\bf Lemma}

\newtheorem{question}[theorem]{\bf Question}
\newtheorem*{question*}{\bf Question}
\newtheorem{proposition}[theorem]{\bf Proposition}
\newtheorem*{proposition*}{\bf Proposition}
\newtheorem{corollary}[theorem]{\bf Corollary}
\newtheorem*{corollary*}{\bf Corollary}


\newcommand{\potholder}[2]{
\begin{tikzpicture}[line width=1.5pt,black,scale=#2]
\tikzmath{\n = #1;}
\foreach \x [evaluate=\x as \y using {int(mod(\x,2))}] 
in {1,...,\n}{
\draw (\x+\y-0.5,\n+1) to[out={180*\x},in=90] (\x,\n+0.5) -- (\x,0.5) to[in={180*\x+180},out=270] (\x-\y+0.5,0);
\draw (\n+1,\x+\y-0.5) to[out={180*\x+90},in=0] (\n+0.5,\x) -- (0.5,\x) to[in={180*\x-90},out=180] (0,\x-\y+0.5);
}
\draw (0,0.5) to[in=180,out=270] (0.5,0);
\draw (\n+1,\n+0.5) to[in=0,out=90] (\n+0.5,\n+1);
\pgfresetboundingbox \clip (-1,-1) rectangle (\n+2,\n+2);
\end{tikzpicture}}

\newcommand{\starlet}[2]{
\begin{tikzpicture}[line width=1.5pt,black,scale=#2]
\tikzmath{\n = #1; \m=\n*2+1;}
\foreach \x in {1,...,\m}{
\draw ({(90+360.0*\x*\n/\m}:1) -- ({(90+360.0*(\x+1)*\n/\m}:1);
}
\pgfresetboundingbox \clip (-1.1,-1.1) rectangle (1.1,1.1);
\end{tikzpicture}}

\pgfmathdeclarefunction{ball}{1}{%
  \pgfmathparse{min(2*(#1-int(#1)),2-2*(#1-int(#1)))}%
}

\newcommand{\billiard}[2]{
\begin{tikzpicture}[line width=1.5pt,black,scale=#2]
\tikzmath{\n = #1; \m=\n*2+1; \nn = 1.0/\n; \mm = 1.0/\m; \nm = \m*\n;}
\foreach \x in {0,0.25,...,\nm}{
\draw ({ball((\x+0.25) * \mm)},{ball(\x * \nn) * 0.8}) 
-- ({ball((\x+0.5) * \mm)},{ball((\x+0.25) * \nn) * 0.8});
}
\pgfresetboundingbox \clip (-0.1,-0.1) rectangle (1.1,0.9);
\end{tikzpicture}}

\newcommand{\starbilliard}[1]{
\begin{tikzpicture}[line width=1.5pt,black,scale=#1]
\tikzmath{\n = 3; \m=\n*2+1;}
\foreach \x in {1,...,\m}
\draw[black] ({(90+360.0*\x*\n/\m}:1) -- ({(90+360.0*(\x+1)*\n/\m}:1);
\fill[white] (-1.1,-1.1) rectangle (1.1,0.0);
\foreach \x in {1,...,\m}
\draw[darkgray] ({(-90+360.0*\x*\n/\m}:1) -- ({(-90+360.0*(\x+1)*\n/\m}:1);
\draw[lightgray,dashed] (-1.1,0) -- (1.1,0);
\foreach \x in {-6,-4,-2,2,4,6}
\node[black,align=center] at ({tan(180.0/14)/sin(180.0*\x/14)*0.975},0) {\Large\textbullet};
\fill[white,fill opacity=0.5] (-1.1,-1.1) rectangle (1.1,-0.005);
\pgfresetboundingbox \clip (-1.0,-1.05) rectangle (1.0,1.05);
\end{tikzpicture}}

\newcommand{\starincrease}[1]{
\begin{tikzpicture}[scale=#1]
\draw[lightgray, line width=3pt] (-0.8,0.4) -- (0.5,-1) -- (0,0.8) -- (-0.5,-1) -- (0.8,0.4) -- (-1,-0.25) -- (1,-0.25) -- (-0.8,0.4) -- (0.5,-1);
\draw[black, line width=1.5pt] (-0.15,-0.3) -- (-0.8,0.4) -- (1,-0.25) -- (-1,-0.25) -- (0.8,0.4) -- (0.15,-0.3) to[in=315,out=225] (-0.15,-0.3);
\pgfresetboundingbox \clip (-1.0,-1.0) rectangle (1.0,1.0);
\end{tikzpicture}}

\newcommand{\figureeight}[1]{
\begin{tikzpicture}[line width = 4pt,scale=#1]
\draw[shorten <=8pt,gray,double=lightgray,double distance=1pt,line width = 2pt] 
(-0.5,0) .. controls +(-1,-0.5) and +(-1,0.5) .. (0,-1.75);
\draw[gray,double=lightgray,double distance=1pt,line width = 2pt]
(0,-1.75) .. controls +(2,-1) and +(0,-0.5) .. (2.25,-0.75);
\draw[shorten >=8pt] 
(2.25,-0.75) .. controls +(0,+0.5) and +(1,-0.5) .. (1.25,0.75);
\draw[shorten <=8pt] 
(1.25,0.75) .. controls +(-0.5,0.25) and +(0.75,-0.25) .. (-1,1.75);
\draw[shorten >=8pt] 
(-1,1.75) .. controls +(-2.25,0.75) and +(-2,-1) .. (0,-1.75);
\draw[shorten <=8pt] 
(0,-1.75) .. controls +(1,0.5) and +(0.5,-0.5) .. (-0.5,0);
\draw
(-0.5,0) .. controls +(-0.2,0.2) and +(0,-0.3) .. (-1.25,1);
\draw[shorten >=8pt,gray,double=lightgray,double distance=1pt,line width = 2pt] 
(-1.25,1) .. controls +(0,+0.2) and +(-0.2,-0.2) .. (-1,1.75);
\draw[shorten <=8pt,gray,double=lightgray,double distance=1pt,line width = 2pt] 
(-1,1.75)  .. controls +(0.75,0.75) and +(1,1.5) .. (1.25,0.75);
\draw[shorten >=8pt,gray,double=lightgray,double distance=1pt,line width = 2pt] 
(1.25,0.75) .. controls +(-0.5,-0.75) and +(0.5,0.25) .. (-0.5,0);
\node[black,align=center] at (-1.25,1) {\Huge\textbullet};
\node[black,align=center] at (2.25,-0.75) {\Huge\textbullet};
\pgfresetboundingbox \clip (-3,-2.5) rectangle (3,2.5);
\end{tikzpicture}}

\newcommand{\xmove}[1]{
\begin{tikzpicture}[line width = 3pt,scale=#1]
\draw (-3.75,0) -- (-1.25,0);
\draw (-3,-1) -- (-3,1);
\draw (-2,-1) -- (-2,1);
\node (0,0) {\Huge $\to$};
\draw (3.75,0) -- (1.25,0);
\draw (3,-1) .. controls +(90:.3) and +(-90:.3) .. (2.2,0) .. controls +(90:.3) and +(-90:.3) .. (3,1);
\draw (2,-1) .. controls +(90:.3) and +(-90:.3) .. (2.8,0) .. controls +(90:.3) and +(-90:.3) .. (2,1);
\pgfresetboundingbox \clip (-5,-1.5) rectangle (5,1.5);
\end{tikzpicture}}

\newcommand{\xmovelifted}[1]{
\begin{tikzpicture}[line width = 3pt,white,double=black,double distance = 3pt,scale=#1]
\draw[double=black] (-3,-1) -- (-3,1);
\draw[double=black] (-3.75,0) -- (-1.25,0);
\draw[double=black] (-2,-1) -- (-2,1);
\node[black] (0,0) {\Huge $\to$};
\draw[double=black] (2,-1) .. controls +(90:.3) and +(-90:.3) .. (2.8,0) .. controls +(90:.3) and +(-90:.3) .. (2,1);
\draw[double=black] (3.75,0) -- (1.25,0);
\draw[double=black] (3,-1) .. controls +(90:.3) and +(-90:.3) .. (2.2,0) .. controls +(90:.3) and +(-90:.3) .. (3,1);
\pgfresetboundingbox \clip (-5,-1.25) rectangle (5,1.25);
\end{tikzpicture}}

\newcommand{\potholderfigure}[1]{
\begin{tikzpicture}[thick,scale=#1]
\foreach \x in {2,4} \draw [white,line width=3.0pt,double=gray,double distance=5.0pt] (\x,3.5) to[in=0,out=90] (\x-0.5,4);
\foreach \x in {1,3} \draw [white,line width=3.0pt,double=gray,double distance=5.0pt] (\x,3.5) to[in=180,out=90] (\x+0.5,4);
\foreach \x in {0,2} \draw [white,line width=3.0pt,double=gray,double distance=5.0pt] (\x,0.5) to[in=180,out=270] (\x+0.5,0);
\foreach \x in {1,3} \draw [white,line width=3.0pt,double=gray,double distance=5.0pt] (\x,0.5) to[in=0,out=270] (\x-0.5,0);
\foreach \x/\y in {1/1,1/2,2/2,2/3,3/1,3/2}{
\draw [white,line width=3.0pt,double=gray,double distance=5.0pt] (\x,\y-0.51) to (\x,\y+0.51);
\draw [white,line width=3.0pt,double=black,double distance=4.0pt] (\x-0.51,\y) to (\x+0.51,\y);}
\foreach \x/\y in {1/3,2/1,3/3}{
\draw [white,line width=3.0pt,double=black,double distance=4.0pt] (\x-0.51,\y) to (\x+0.51,\y);
\draw [white,line width=3.0pt,double=gray,double distance=5.0pt] (\x,\y-0.51) to (\x,\y+0.51);}
\foreach \y in {1,3} \draw [white,line width=3.0pt,double=black,double distance=4.0pt] (0.5,\y) to[in=90,out=180] (0,\y-0.5);
\foreach \y in {2} \draw [white,line width=3.0pt,double=black,double distance=4.0pt] (0.5,\y) to[in=270,out=180] (0,\y+0.5); 
\foreach \y in {1,3} \draw [white,line width=3.0pt,double=black,double distance=4.0pt] (3.5,\y) to[in=270,out=0] (4,\y+0.5);
\foreach \y in {2} \draw [white,line width=3.0pt,double=black,double distance=4.0pt] (3.5,\y) to[in=90,out=0] (4,\y-0.5);
\node[black] at (0,0.5) {\Huge \textbullet};
\node[black] at (4,3.5) {\Huge \textbullet};
\pgfresetboundingbox \clip (-1,-0.5) rectangle (5,4.5);
\end{tikzpicture}}

\newcommand{\borromean}[1]{
\begin{tikzpicture}[scale=#1]
\draw[white,line width=3.0pt,double=gray,double distance=5.0pt] (-90:1.2) .. controls +(-0:0.6) and +(-60:0.6) .. (-0:0.7);
\draw[gray,line width=2.0pt,double=lightgray,double distance=1.0pt] (-90:1.2) .. controls +(-0:0.6) and +(-60:0.6) .. (-0:0.7);
\draw[white,line width=3.0pt,double=black,double distance=4.0pt] (-210:1.2) .. controls +(-120:0.6) and +(-180:0.6) .. (-120:0.7);
\draw[white,line width=3.0pt,double=black,double distance=4.0pt] (-330:1.2) .. controls +(-240:0.6) and +(60:0.6) .. (-240:0.7);
\draw[white,line width=3.0pt,double=gray,double distance=5.0pt] (-90:1.2) .. controls +(-180:0.6) and +(-120:0.6) .. (-180:0.7);
\draw[gray,line width=2.0pt,double=lightgray,double distance=1.0pt] (-90:1.2) .. controls +(-180:0.6) and +(-120:0.6) .. (-180:0.7);
\draw[white,line width=3.0pt,double=gray,double distance=5.0pt] (-210:1.2) .. controls +(-300:0.6) and +(120:0.6) .. (-300:0.7);
\draw[gray,line width=2.0pt,double=lightgray,double distance=1.0pt] (-210:1.2) .. controls +(-300:0.6) and +(120:0.6) .. (-300:0.7);
\draw[white,line width=3.0pt,double=black,double distance=4.0pt] (-330:1.2) .. controls +(-60:0.6) and +(0:0.6) .. (-60:0.7);
\draw[white,line width=3.0pt,double=black,double distance=4.0pt] (-90:-0.4) .. controls +(-180:0.3) and +(60:0.3) .. (-180:0.7);
\draw[white,line width=3.0pt,double=gray,double distance=5.0pt] (-210:-0.4) .. controls +(-300:0.3) and +(-60:0.3) .. (-300:0.7);
\draw[gray,line width=2.0pt,double=lightgray,double distance=1.0pt] (-210:-0.4) .. controls +(-300:0.3) and +(-60:0.3) .. (-300:0.7);
\draw[white,line width=3.0pt,double=black,double distance=4.0pt] (-330:-0.4) .. controls +(-60:0.3) and +(180:0.3) .. (-60:0.7);
\draw[white,line width=3.0pt,double=black,double distance=4.0pt] (-90:-0.4) .. controls +(-0:0.3) and +(120:0.3) .. (-0:0.7);
\draw[white,line width=3.0pt,double=gray,double distance=5.0pt] (-210:-0.4) .. controls +(-120:0.3) and +(0:0.3) .. (-120:0.7);
\draw[gray,line width=2.0pt,double=lightgray,double distance=1.0pt] (-210:-0.4) .. controls +(-120:0.3) and +(0:0.3) .. (-120:0.7);
\draw[white,line width=3.0pt,double=gray,double distance=5.0pt] (-330:-0.4) .. controls +(-240:0.3) and +(-120:0.3) .. (-240:0.7);
\draw[gray,line width=2.0pt,double=lightgray,double distance=1.0pt] (-330:-0.4) .. controls +(-240:0.3) and +(-120:0.3) .. (-240:0.7);
\node[black] at (-180:0.7) {\Huge \textbullet};
\node[black] at (-0:0.7) {\Huge \textbullet};
\node[black] at (-120:0.7) {\Huge \textbullet};
\node[black] at (-210:1.2) {\Huge \textbullet};
\node[black] at (-240:0.7) {\Huge \textbullet};
\node[black] at (-330:-0.4) {\Huge \textbullet};
\pgfresetboundingbox \clip (-1.25,-1.25) rectangle (1.25,1.25);
\end{tikzpicture}}

\newcommand{\potholderlinkfigure}[1]{
\begin{tikzpicture}[thick,scale=#1]
\foreach \x [evaluate=\x as \y using {1-2*int(mod(\x,2))}] in {1,...,4}
\foreach \z in {-0.25,0,0.25} {
\draw [white,line width=1.0pt,double=gray,double distance=3.0pt] 
(\x+\z*\y,3.5) to[in=90-90*\y,out=90] (\x-0.5*\y,4+\z);
\draw [white,line width=1.0pt,double=gray,double distance=3.0pt] 
(\x+\z*\y-1,0.5) to[in=90-90*\y,out=-90] (\x-0.5*\y-1,-\z);}
\foreach \x [evaluate=\x as \y using {1-2*int(mod(\x,2))}] in {1,...,3}
\foreach \z in {-0.25,0,0.25} {
\draw [white,line width=1.0pt,double=black,double distance=2.0pt] 
(3.5,\x+\z*\y) to[in=90*\y,out=0] (4+\z,\x-0.5*\y);
\draw [white,line width=1.0pt,double=black,double distance=2.0pt] 
(0.5,4-\x-\z*\y) to[in=-90*\y,out=180] (-\z,4-\x+0.5*\y);}
\foreach \x in {1,2,3} \foreach \y in {1,2,3}
\foreach \xa/\xb/\xc in {-0.5/-0.25/-0.15,-0.15/0/0.15,0.15/0.25/0.5}
\foreach \ya/\yb/\yc in {-0.5/-0.25/-0.15,-0.15/0/0.15,0.15/0.25/0.5}{
\draw [white,line width=1.0pt,double=black,double distance=2.0pt] 
(\x+\xa-0.01,\y+\yb) -- (\x+\xc+0.01,\y+\yb);
\draw [white,line width=1.0pt,double=gray,double distance=3.0pt] 
(\x+\xb,\y+\ya-0.01) -- (\x+\xb,\y+\yc+0.01);
\draw [white,line width=1.0pt,double=black,double distance=2.0pt,opacity=int(rand+1)] 
(\x+\xa-0.01,\y+\yb) -- (\x+\xc+0.01,\y+\yb);
}
\node[black] at (0,0.5) {\LARGE \textbullet};
\node[black] at (4,3.5) {\LARGE \textbullet};
\node[black] at (0.25,0.5) {\LARGE \textbullet};
\node[black] at (4.25,3.5) {\LARGE \textbullet};
\node[black] at (-0.25,0.5) {\LARGE \textbullet};
\node[black] at (3.75,3.5) {\LARGE \textbullet};
\pgfresetboundingbox \clip (-0.3,-0.3) rectangle (4.3,4.3);
\end{tikzpicture}}

\newcommand{\xmoves}[1]{
\begin{tikzpicture}[line width = 4pt,scale=#1]
\draw[gray,line width=2.0pt,double=lightgray,double distance=1.0pt] (-5.5,0) -- (-5.0,0);
\draw (-5.0,0) -- (-1,0);
\draw[gray,line width=2.0pt,double=lightgray,double distance=1.0pt] (-4.3,-1) -- (-4.3,1);
\draw[gray,line width=2.0pt,double=lightgray,double distance=1.0pt] (-3.4,-1) -- (-3.4,1);
\draw[gray,line width=2.0pt,double=lightgray,double distance=1.0pt] (-2.5,-1) -- (-2.5,1);
\draw (-1.6,-1) -- (-1.6,1);
\node (0,0) {\Huge $\to$};
\draw (5.5,0) -- (2.4,0);
\draw[gray,line width=2.0pt,double=lightgray,double distance=1.0pt] (2.4,0) -- (1,0);
\draw (5,-1) .. controls +(90:.4) and +(-90:.4) .. (1.6,0) .. controls +(90:.4) and +(-90:.4) .. (5,1);
\draw[gray,line width=2.0pt,double=lightgray,double distance=1.0pt] (4,-1) .. controls +(90:.3) and +(-90:.3) .. (5,0) .. controls +(90:.3) and +(-90:.3) .. (4,1);
\draw[gray,line width=2.0pt,double=lightgray,double distance=1.0pt] (3,-1) .. controls +(90:.3) and +(-90:.3) .. (4,0) .. controls +(90:.3) and +(-90:.3) .. (3,1);
\draw[gray,line width=2.0pt,double=lightgray,double distance=1.0pt] (2,-1) .. controls +(90:.3) and +(-90:.3) .. (3,0) .. controls +(90:.3) and +(-90:.3) .. (2,1);
\node[black] at (2.4,-0.03) {\Huge \textbullet};
\node[black] at (-5.0,-0.03) {\Huge \textbullet};
\pgfresetboundingbox \clip (-5,-1.5) rectangle (5,1.5);
\end{tikzpicture}}

\newcommand{\standard}[1]{
\begin{tikzpicture}[line width = 4pt,scale=#1]
\draw[shorten <=8pt,gray,double=lightgray,double distance=1pt,line width = 2pt] 
(-0.5,0) .. controls +(-1,-0.5) and +(-1,0.5) .. (0,-1.75);
\draw[gray,double=lightgray,double distance=1pt,line width = 2pt]
(0,-1.75) .. controls +(2,-1) and +(0,-0.5) .. (2.25,-0.75);
\draw[shorten >=8pt] 
(2.25,-0.75) .. controls +(0,+0.5) and +(1,-0.5) .. (1.25,0.75);
\draw[shorten <=8pt] 
(1.25,0.75) .. controls +(-0.5,0.25) and +(0.75,-0.25) .. (-1,1.75);
\draw[shorten >=8pt] 
(-1,1.75) .. controls +(-2.25,0.75) and +(-2,-1) .. (0,-1.75);
\draw[shorten <=8pt] 
(0,-1.75) .. controls +(1,0.5) and +(0.5,-0.5) .. (-0.5,0);
\draw
(-0.5,0) .. controls +(-0.2,0.2) and +(0,-0.3) .. (-1.25,1);
\draw[shorten >=8pt,gray,double=lightgray,double distance=1pt,line width = 2pt] 
(-1.25,1) .. controls +(0,+0.2) and +(-0.2,-0.2) .. (-1,1.75);
\draw[shorten <=8pt,gray,double=lightgray,double distance=1pt,line width = 2pt] 
(-1,1.75)  .. controls +(0.75,0.75) and +(1,1.5) .. (1.25,0.75);
\draw[shorten >=8pt,gray,double=lightgray,double distance=1pt,line width = 2pt] 
(1.25,0.75) .. controls +(-0.5,-0.75) and +(0.5,0.25) .. (-0.5,0);
\draw[dashed,black,line width = 4pt] 
(2.15,0) .. controls +(-0.5,0) and +(0.5,-0.5) .. (-0.25,0.65);
\draw[dashed,gray,line width = 5pt] 
(1.0,-2.1) .. controls +(-3.5,-1) and +(-3.5,-1.5) .. (-0.25,0.65);
\draw[darkgray,line width = 2pt] 
(-1.25,1) -- (-0.25,0.65);
\node[darkgray,align=center] at (-1.25,1) {\Huge\textbullet};
\node[darkgray] at (-1.5,1) {\Large \textbf{t}};
\node[darkgray,align=center] at (2.25,-0.75) {\Huge\textbullet};
\node[darkgray] at (2.5,-0.75) {\Large \textbf{s}};
\node[darkgray,align=center] at (2.15,0) {\Huge\textbullet};
\node[darkgray] at (2.4,0) {\Large \textbf{a}};
\node[darkgray,align=center] at (1.0,-2.1) {\Huge\textbullet};
\node[darkgray] at (1.0,-1.75) {\Large \textbf{b}};
\node[darkgray,align=center] at (-0.25,0.65) {\Huge\textbullet};
\node[darkgray] at (0,0.9) {\Large \textbf{t'}};
\node[black] at (-2.5,0.5) {\Large \textbf{A}};
\node[darkgray] at (0,2.5) {\Large \textbf{B}};
\node[black] at (1.25,-0.3) {\Large \textbf{A'}};
\node[darkgray] at (-2.75,-1.5) {\Large \textbf{B'}};
\pgfresetboundingbox \clip (-3,-2.75) rectangle (3,2.75);
\end{tikzpicture}}

\newcommand{\meanderEDABC}{
\begin{tikzpicture}[thick,scale=1.00]
\foreach \y in {1,3,4}{
\draw [gray,line width=2.0pt,double=lightgray,double distance=2.0pt] (-0.5,\y) to (+0.5,\y);
\draw [white,line width=4.0pt,double=black,double distance=4.0pt] (0,\y-0.5) to (0,\y+0.5);
}
\foreach \y in {2,5}{
\draw [white,line width=4.0pt,double=black,double distance=4.0pt] (0,\y-0.5) to (0,\y+0.5);
\draw [white,line width=4.0pt,double=lightgray,double distance=4.0pt] (-0.5,\y) to (0.5,\y);
\draw [gray,line width=2.0pt,double=lightgray,double distance=2.0pt] (-0.5,\y) to (0.5,\y);
}
\draw [gray,line width=2.0pt,double=lightgray,double distance=2.0pt] 
(-0.5,3) to[in=180,out=180] (-0.5,6) to (0,6);
\draw [gray,line width=2.0pt,double=lightgray,double distance=2.0pt] 
(-0.5,4) to[in=180,out=180] (-0.5,5);
\draw [gray,line width=2.0pt,double=lightgray,double distance=2.0pt] 
(-0.5,1) to[in=180,out=180] (-0.5,2);
\draw [gray,line width=2.0pt,double=lightgray,double distance=2.0pt] 
(0.5,5) to[in=0,out=0] (0.5,0) to (0,0);
\draw [gray,line width=2.0pt,double=lightgray,double distance=2.0pt] 
(0.5,4) to[in=0,out=0] (0.5,1);
\draw [gray,line width=2.0pt,double=lightgray,double distance=2.0pt] 
(0.5,3) to[in=0,out=0] (0.5,2);
\draw [white,line width=4.0pt,double=black,double distance=4.0pt] (0,0.1) to (0,0.5);
\draw [white,line width=4.0pt,double=black,double distance=4.0pt] (0,5.9) to (0,5.5);
\node[black] at (0,0) {\Huge \textbullet};
\node[black] at (-0.3,0) {\Large $s$};
\node[black] at (0,6) {\Huge \textbullet};
\node[black] at (0.3,6) {\Large $t$};
\node[black] at (-0.3,0.5) {\Large \textbf{$A$}};
\node[darkgray] at (0.5,-0.3) {\Large \textbf{$B$}};
\foreach \y in {1,2,3,4,5}{ 
\draw[-|] (-2.0,\y-0.5) -- (-2.0,\y);
\draw[|-] (-2.0,\y) -- (-2.0,\y+0.5);
\node[black] at (-2.5,\y) {\Large \y};
}
\pgfresetboundingbox \clip (-2.5,-1.25) rectangle (2.5,6);
\end{tikzpicture}}

\newcommand{\potholderEDABC}{
\begin{tikzpicture}[thick,scale=0.6]
\foreach \x in {2,4,6}
\draw [white,line width=3.0pt,double=gray,double distance=3.0pt] 
(\x,9.5) to (\x,10) to[in=0,out=90] (\x-0.5,10.5);
\foreach \x in {1,3,5}
\draw [white,line width=3.0pt,double=gray,double distance=3.0pt] 
(\x,9.5) to (\x,10) to[in=180,out=90] (\x+0.5,10.5);
\foreach \x in {0,2,4}
\draw [white,line width=3.0pt,double=gray,double distance=3.0pt] 
(\x,0.5) to (\x,0) to[in=180,out=270] (\x+0.5,-0.5);
\foreach \x in {1,3,5}
\draw [white,line width=3.0pt,double=gray,double distance=3.0pt] 
(\x,0.5) to (\x,0) to[in=0,out=270] (\x-0.5,-0.5);
\foreach \x/\y in {
2/8,2/9,
3/2,3/3,3/4,3/5,3/6,3/7,3/8,3/9,
4/4,4/5,4/6,4/7,4/8,4/9,
5/6,5/7,5/8,5/9
}{
\draw [white,line width=3.0pt,double=gray,double distance=3.0pt] (\x,\y-0.5) to (\x,\y+0.5);
\draw [white,line width=3.0pt,double=black,double distance=3.0pt] (\x-0.5,\y) to (\x+0.5,\y);
}
\foreach \x/\y in {1/9,4/3}{
\draw [white,line width=3.0pt,double=black,double distance=3.0pt] (\x-0.5,\y) to (\x+0.5,\y);
\draw [white,line width=3.0pt,double=gray,double distance=3.0pt] (\x,\y-0.5) to (\x,\y+0.5);
}
\foreach \x/\y in {2/7,3/1,5/5}{
\draw [white,line width=3.0pt,double=gray,double distance=3.0pt] (\x,\y-0.5) to (\x,\y+0.5);
\draw [white,line width=3.0pt,double=black,double distance=3.0pt] (\x-0.5,\y) to (\x+0.5,\y);
}
\foreach \x/\y in {
1/1,1/2,1/3,1/4,1/5,1/6,1/7,1/8,
2/1,2/2,2/3,2/4,2/5,2/6,
4/1,4/2,
5/1,5/2,5/3,5/4
}{
\draw [white,line width=3.0pt,double=black,double distance=3.0pt] (\x-0.5,\y) to (\x+0.5,\y);
\draw [white,line width=3.0pt,double=gray,double distance=3.0pt] (\x,\y-0.5) to (\x,\y+0.5);
}
\foreach \y in {1,3,5,7,9}
\draw [white,line width=3.0pt,double=black,double distance=3.0pt] 
(0.5,\y) to[in=90,out=180] (0,\y-0.5);
\foreach \y in {2,4,6,8}
\draw [white,line width=3.0pt,double=black,double distance=3.0pt] 
(0.5,\y) to[in=270,out=180] (0,\y+0.5);
\foreach \y in {1,3,5,7,9}
\draw [white,line width=3.0pt,double=black,double distance=3.0pt] 
(5.5,\y) to[in=270,out=0] (6,\y+0.5);
\foreach \y in {2,4,6,8}
\draw [white,line width=3.0pt,double=black,double distance=3.0pt] 
(5.5,\y) to[in=90,out=0] (6,\y-0.5);
\draw [white,line width=3.0pt,double=black,double distance=3.0pt] 
(6,9.5) to (6,10);
\draw [white,line width=3.0pt,double=black,double distance=3.0pt] 
(0,0.5) to (0,0);
\node[black] at (0,0) {\Huge \textbullet};
\node[black] at (-0.5,0) {\Large $s$};
\node[black] at (6,10) {\Huge \textbullet};
\node[black] at (6.5,10) {\Large $t$};
\foreach \x/\y in {1/9,2/7,3/1,4/3,5/5}
\node[black] at (\x+0.35,\y+0.35) {\small $c_{\x}$};
\foreach \x in {1,2,3,4,5}{ 
\draw[-|] (\x-0.5,-1.25) -- (\x,-1.25);
\draw[|-] (\x,-1.25) -- (\x+0.5,-1.25);
\node[black] at (\x,-1.75) {\x};
}
\foreach \y in {1,2,3,4,5,6,7,8,9}{ 
\draw[-|] (-1,\y-0.5) -- (-1,\y);
\draw[|-] (-1,\y) -- (-1,\y+0.5);
\node[black] at (-1.5,\y) {\y};
}
\pgfresetboundingbox \clip (-2,-2.5) rectangle (6,11);
\end{tikzpicture}}

\newcommand{\intermediateEDABC}{
\begin{tikzpicture}[thick,scale=1.25]
\foreach \x in {2,4,6}
\draw [white,line width=4.0pt,double=lightgray,double distance=4.0pt] 
(\x,9.5) to (\x,10) to[in=0,out=90] (\x-0.5,10.5);
\foreach \x in {1,3,5}
\draw [white,line width=4.0pt,double=lightgray,double distance=4.0pt] 
(\x,9.5) to (\x,10) to[in=180,out=90] (\x+0.5,10.5);
\foreach \x in {0,2,4}
\draw [white,line width=4.0pt,double=lightgray,double distance=4.0pt] 
(\x,0.5) to (\x,0) to[in=180,out=270] (\x+0.5,-0.5);
\foreach \x in {1,3,5}
\draw [white,line width=4.0pt,double=lightgray,double distance=4.0pt] 
(\x,0.5) to (\x,0) to[in=0,out=270] (\x-0.5,-0.5);
\foreach \x/\y in {1/9,2/7,3/1,4/3,5/5}{
\draw [gray,line width=2.0pt,double=lightgray,double distance=2.0pt] 
(\x,\y+0.25) to[in=0,out=90] (\x-0.25,\y+0.5) to (-0.5,\y+0.5);
}
\foreach \x/\y in {
2/8,2/9,
3/2,3/3,3/4,3/5,3/6,3/7,3/8,3/9,
4/4,4/5,4/6,4/7,4/8,4/9,
5/6,5/7,5/8,5/9
}{
\draw [white,line width=4.0pt,double=lightgray,double distance=4.0pt] (\x,\y-0.5) to (\x,\y+0.5);
\draw [white,line width=4.0pt,double=black,double distance=4.0pt] (\x-0.5,\y) to (\x+0.5,\y);
}
\foreach \x/\y in {1/9,4/3}{
\draw [white,line width=4.0pt,double=black,double distance=4.0pt] (\x-0.5,\y) to (\x+0.5,\y);
\draw [white,line width=4.0pt,double=lightgray,double distance=4.0pt] (\x,\y-0.5) to (\x,\y+0.5);
\draw [gray,line width=2.0pt,double=lightgray,double distance=2.0pt] (\x,\y-0.25) to (\x,\y+0.25);
}
\foreach \x/\y in {2/7,3/1,5/5}{
\draw [white,line width=4.0pt,double=lightgray,double distance=4.0pt] (\x,\y-0.5) to (\x,\y+0.5);
\draw [gray,line width=2.0pt,double=lightgray,double distance=2.0pt] (\x,\y-0.25) to (\x,\y+0.25);
\draw [white,line width=4.0pt,double=black,double distance=4.0pt] (\x-0.5,\y) to (\x+0.5,\y);
}
\foreach \x/\y in {
1/1,1/2,1/3,1/4,1/5,1/6,1/7,1/8,
2/1,2/2,2/3,2/4,2/5,2/6,
4/1,4/2,
5/1,5/2,5/3,5/4
}{
\draw [white,line width=4.0pt,double=black,double distance=4.0pt] (\x-0.5,\y) to (\x+0.5,\y);
\draw [white,line width=4.0pt,double=lightgray,double distance=4.0pt] (\x,\y-0.5) to (\x,\y+0.5);
}
\foreach \x/\y in {1/9,2/7,3/1,4/3,5/5}{
\draw [white,line width=3.0pt,double=white,double distance=3.0pt] 
(\x+0.25,\y-0.5) to (6.5,\y-0.5);
\draw [gray,line width=2.0pt,double=lightgray,double distance=2.0pt] 
(\x,\y-0.25) to[in=180,out=270] (\x+0.25,\y-0.5) to (6.5,\y-0.5);
\draw [gray,line width=2.0pt,double=lightgray,double distance=2.0pt] 
(\x,\y+0.25) to[in=0,out=90] (\x-0.25,\y+0.5);
}
\foreach \y in {1,3,5,7,9}
\draw [white,line width=4.0pt,double=black,double distance=4.0pt] 
(0.5,\y) to[in=90,out=180] (0,\y-0.5);
\foreach \y in {2,4,6,8}
\draw [white,line width=4.0pt,double=black,double distance=4.0pt] 
(0.5,\y) to[in=270,out=180] (0,\y+0.5);
\foreach \y in {1,3,5,7,9}
\draw [white,line width=4.0pt,double=black,double distance=4.0pt] 
(5.5,\y) to[in=270,out=0] (6,\y+0.5);
\foreach \y in {2,4,6,8}
\draw [white,line width=4.0pt,double=black,double distance=4.0pt] 
(5.5,\y) to[in=90,out=0] (6,\y-0.5);
\draw [white,line width=4.0pt,double=black,double distance=4.0pt] 
(6,9.5) to (6,10);
\draw [white,line width=4.0pt,double=black,double distance=4.0pt] 
(0,0.5) to (0,0);
\draw [gray,line width=2.0pt,double=lightgray,double distance=2.0pt] 
(0,0) -- (0,-0.5) to[in=180,out=270] (1,-1.5) -- (6.5,-1.5);
\draw [gray,line width=2.0pt,double=lightgray,double distance=2.0pt] 
(6,10) -- (6,10.5) to[in=0,out=90] (5,11.5) -- (-0.5,11.5);
\draw [gray,line width=2.0pt,double=lightgray,double distance=2.0pt] 
(-0.5,5.5) to[in=180,out=180] (-0.5,11.5);
\draw [gray,line width=2.0pt,double=lightgray,double distance=2.0pt] 
(-0.5,7.5) to[in=180,out=180] (-0.5,9.5);
\draw [gray,line width=2.0pt,double=lightgray,double distance=2.0pt] 
(-0.5,1.5) to[in=180,out=180] (-0.5,3.5);
\draw [gray,line width=2.0pt,double=lightgray,double distance=2.0pt] 
(6.5,8.5) to[in=0,out=0] (6.5,-1.5);
\draw [gray,line width=2.0pt,double=lightgray,double distance=2.0pt] 
(6.5,6.5) to[in=0,out=0] (6.5,0.5);
\draw [gray,line width=2.0pt,double=lightgray,double distance=2.0pt] 
(6.5,4.5) to[in=0,out=0] (6.5,2.5);
\node[black] at (0,0) {\Huge \textbullet};
\node[black] at (-0.3,0) {\LARGE $s$};
\node[black] at (6,10) {\Huge \textbullet};
\node[black] at (6.3,10) {\LARGE $t$};
\foreach \x/\y in {1/9,4/3}
\node[black] at (\x+0.3,\y+0.3) {\Large $c_{\x}$};
\foreach \x/\y in {2/7,3/1,5/5}
\node[black] at (\x-0.3,\y-0.3) {\Large $c_{\x}$};
\node[black] at (-0.35,0.7) {\Huge $\mathbf{H}$};
\node[gray] at (0.6,-0.1) {\Huge $\mathbf{V}$};
\node[darkgray] at (-0.45,-0.9) {\Huge $\mathbb{W}$};
\foreach \x in {0,1,2,3,4,5,6}{ 
\draw[-|] (\x-0.5,-2.25) -- (\x,-2.25);
\draw[|-] (\x,-2.25) -- (\x+0.5,-2.25);
\node[black] at (\x,-2.75) {\Large \x};
}
\foreach \x in {1,2,3}
\node[gray] at (7-\x-\x,-0.7) {\LARGE $\mathbf{+\x}$};
\foreach \x in {1,2,3}
\node[gray] at (\x+\x-1,10.7) {\LARGE $\mathbf{-\x}$};
\node[gray] at (-0.5,11.2) {\LARGE $\mathbf{-3}$};
\node[gray] at (-0.5,3.2) {\LARGE $\mathbf{-2}$};
\node[gray] at (-0.5,9.2) {\LARGE $\mathbf{-1}$};
\node[gray] at (6.4,-1.2) {\LARGE $\mathbf{+3}$};
\node[gray] at (6.4,0.8) {\LARGE $\mathbf{+2}$};
\node[gray] at (6.4,2.8) {\LARGE $\mathbf{+1}$};
\pgfresetboundingbox \clip (-1.5,-3.25) rectangle (8,11.75);
\end{tikzpicture}}

\newcommand{\meander}[1]{
\begin{tikzpicture}[scale=#1]
\draw[gray,line width=2.5pt] 
(-0.4,0) .. controls +(-225:0.4) and +(90:0.4) .. (-1,0);
\draw[black,line width=2.5pt] 
(-0.4,0) .. controls +(225:0.4) and +(-90:0.4) .. (-1,0);
\draw[black,line width=2.5pt] 
(0.4,0) .. controls +(-225:0.4) and +(45:0.4) .. (-0.4,0);
\draw[black,line width=2.5pt] 
(1,0) .. controls +(270:0.4) and +(-45:0.4) .. (0.4,0);
\draw[gray,line width=2.5pt] 
(1,0) .. controls +(-270:0.4) and +(45:0.4) .. (0.4,0);
\draw[gray,line width=2.5pt] 
(0.4,0) .. controls +(225:0.4) and +(-45:0.4) .. (-0.4,0);
\node[black] at (1,0) {\Large \textbullet};
\node[black] at (-1,0) {\Large \textbullet};
\node[black] at (1.5,0) {\Huge $\to$};
\node[black] at (-0.5,-0.4) {\Large $C$};
\pgfresetboundingbox \clip (-1,-1) rectangle (2,1);
\end{tikzpicture}}

\newcommand{\whitehead}[1]{
\begin{tikzpicture}[scale=#1]
\draw[white,line width=1.5pt,double=black,double distance=3.0pt] 
(-1.5,0) .. controls +(225:0.5) and +(-90:1) .. (-2.65,0);
\draw[white,line width=1.5pt,double=black,double distance=3.0pt] 
(1.5,0) .. controls +(-225:1.6) and +(45:1.6) .. (-1.5,0);
\draw[white,line width=1.5pt,double=gray,double distance=3.0pt] 
(-1,0) .. controls +(-225:1.2) and +(90:1.5) .. (-3,0);
\draw[white,line width=1.5pt,double=black,double distance=3.0pt] 
(-1,0) .. controls +(225:1.2) and +(-90:1.5) .. (-3,0);
\draw[white,line width=1.5pt,double=black,double distance=3.0pt] 
(1,0) .. controls +(-225:1) and +(45:1) .. (-1,0);
\draw[white,line width=1.5pt,double=black,double distance=3.0pt] 
(3,0) .. controls +(270:1.5) and +(-45:1.2) .. (1,0);
\draw[white,line width=1.5pt,double=black,double distance=3.0pt] 
(2.65,0) .. controls +(270:1) and +(-45:0.5) .. (1.5,0);
\draw[white,line width=1.5pt,double=gray,double distance=3.0pt] 
(2.65,0) .. controls +(-270:1) and +(45:0.5) .. (1.5,0);
\draw[white,line width=1.5pt,double=gray,double distance=3.0pt] 
(1.5,0) .. controls +(225:1.6) and +(-45:1.6) .. (-1.5,0);
\draw[white,line width=1.5pt,double=gray,double distance=3.0pt] 
(3,0) .. controls +(-270:1.5) and +(45:1.2) .. (1,0);
\draw[white,line width=1.5pt,double=gray,double distance=3.0pt] 
(-1.5,0) .. controls +(-225:0.5) and +(90:1) .. (-2.65,0);
\draw[white,line width=1.5pt,double=gray,double distance=3.0pt,shorten >=3.5pt] 
(1,0) .. controls +(225:1) and +(-45:1) .. (-1,0);
\node[black] at (3,0) {\Huge \textbullet};
\node[black] at (-3,0) {\Huge \textbullet};
\node[black] at (2.65,0) {\Huge \textbullet};
\node[black] at (-2.65,0) {\Huge \textbullet};
\pgfresetboundingbox \clip (-3,-1.25) rectangle (3,1.25);
\end{tikzpicture}}

\newcommand{\arc}[1]{
\begin{tikzpicture}[scale=#1]
\draw[darkgray,line width=3pt,densely dotted,->]
(-6,0) -- (6,0);
\node[darkgray] at (-6,-0.3) {\Large $A$};
\foreach \i in {1,2,3,4,5}{
\draw[black,line width=4pt] 
(\i,0) -- (\i,0.7);
\draw[gray,line width=2pt,double=lightgray,double distance=2pt] 
(\i,0) -- (\i,-0.7);
\node[black] at (\i,0) {\Huge \textbullet};
\draw[black,line width=4pt] 
(-\i,0) -- (-\i,-0.7);
\draw[gray,line width=2pt,double=lightgray,double distance=2pt] 
(-\i,0) -- (-\i,0.7);
\node[black] at (-\i,0) {\Huge \textbullet};
}
\node[black] at (-5,1) {\Large $R_1$};
\node[black] at (-5,-1) {\Large $B_1$};
\node[black] at (-4.65,-0.3) {\Large $s_1$};
\node[black] at (-4,1) {\Large $R_2$};
\node[black] at (-4,-1) {\Large $B_2$};
\node[black] at (-3.65,-0.3) {\Large $s_2$};
\node[black] at (-1,1) {\Large $R_k$};
\node[black] at (-1,-1) {\Large $B_k$};
\node[black] at (-0.65,-0.3) {\Large $s_k$};
\node[black] at (5,-1) {\Large $R_k$};
\node[black] at (5,1) {\Large $B_k$};
\node[black] at (5.35,-0.3) {\Large $t_k$};
\node[black] at (2,-1) {\Large $R_2$};
\node[black] at (2,1) {\Large $B_2$};
\node[black] at (2.35,-0.3) {\Large $t_2$};
\node[black] at (1,-1) {\Large $R_1$};
\node[black] at (1,1) {\Large $B_1$};
\node[black] at (1.35,-0.3) {\Large $t_1$};
\pgfresetboundingbox \clip (-6,-1.25) rectangle (6,1.25);
\end{tikzpicture}}

\newcommand{\parallelize}[1]{
\begin{tikzpicture}[scale=#1]
\draw[gray,line width=2pt,double=lightgray,double distance=2pt] 
(-3,-3.5) -- (-3,-2.5) to[out=90,in=-90] (1.5,1.5) -- (1.5,3.5) 
(-1,-3.5) -- (-1,-2.5) to[in=90,out=90] (5,-2.5) -- (5,-3.5)
(1,-3.5) -- (1,-2.25) to[in=90,out=90] (3,-2.25) -- (3,-3.5)
(-0.5,3.5) -- (-0.5,1) to[in=-90,out=-90] (-2.5,1) -- (-2.5,3.5);
\draw[darkgray,line width=2pt,densely dotted]
(-6,-3.5) -- (-6,3.5) 
(6,-3.5) -- (6,3.5);
\draw[black,line width=4pt]
(-6,-3) -- (6,-3) 
(-6,3) -- (6,3);
\node[black] at (-6.35,2.7) {\Large $s_2$};
\node[black] at (-6.35,-3.3) {\Large $s_1$};
\node[black] at (6.35,2.7) {\Large $t_2$};
\node[black] at (6.35,-3.3) {\Large $t_1$};
\draw[gray,line width=2pt,double=lightgray,double distance=2pt] 
(-6,3) -- (-6.5,3) 
(-6,-3) -- (-6.5,-3) 
(6,3) -- (6.5,3)
(6,-3) -- (6.5,-3);
\node[black] at (6,3) {\Huge \textbullet};
\node[black] at (6,-3) {\Huge \textbullet};
\node[black] at (-6,3) {\Huge \textbullet};
\node[black] at (-6,-3) {\Huge \textbullet};
\node[darkgray] at (-6.5,0) {\Large $S_{12}$};
\node[darkgray] at (6.5,0) {\Large $T_{12}$};
\node[black] at (-4.3,-3.4) {\Large $B_1$};
\node[black] at (-4.3,3.4) {\Large $B_2$};
\draw[black,line width=4pt,dashed]
(6,3) to[in=90,out=180] (5.5,2.5) -- (5.5,-2) to[out=270,in=0] (5,-2.5) -- (-4,-2.5) to[in=270,out=180] (-4.5,-2) -- (-4.5,1) to[in=180,out=90] (-4,1.5) -- (4.5,1.5) to[in=270,out=0] (5,2) -- (5,2.5) to[in=0,out=90] (4.5,3);
\draw[black,line width=4pt,dashed]
(-6,-3) to[in=270,out=0] (-5.5,-2.5) -- (-5.5,2) to[out=90,in=180] (-5,2.5) -- (4.375,2.5) to[in=0,out=0] (4.375,2) -- (-4.5,2) to[in=90,out=180] (-5,1.5) -- (-5,-2.5) to[in=180,out=270] (-4.5,-3);
\pgfresetboundingbox \clip (-6,-4) rectangle (6,4);
\end{tikzpicture}}

\newcommand{\straight}[1]{
\begin{tikzpicture}[scale=#1]
\draw[white,line width=2pt,double=lightgray,double distance=3pt]
(0,-0.25) to[out=270,in=180] (2.5,-0.75) -- (5.5,-0.75);
\draw[white,line width=2pt, double=lightgray, double distance=3pt]
(1,-0.25) to[out=270,in=180] (5.5,-2) to [out=0,in=270] (8,0) -- (8,4) to[out=90,in=0] (5.5,6) to[out=180,in=90] (0,4.25)
(2,-0.25) to[out=270,in=180] (5.5,-1.75) to [out=0,in=270] (7.75,0) -- (7.75,4) to[out=90,in=0] (5.5,5.75) to[out=180,in=90] (1,4.25)
(3,-0.25) to[out=270,in=180] (5.5,-1.5) to [out=0,in=270] (7.5,0) -- (7.5,4) to[out=90,in=0] (5.5,5.5) to[out=180,in=90] (2,4.25)
(4,-0.25) to[out=270,in=180] (5.5,-1.25) to [out=0,in=270] (7.25,0) -- (7.25,4) to[out=90,in=0] (5.5,5.25) to[out=180,in=90] (3,4.25)
(5,-0.25) to[out=270,in=180] (5.5,-1) to [out=0,in=270] (7,0) -- (7,4) to[out=90,in=0] (5.5,5) to[out=180,in=90] (4,4.25)
(5.5,-0.75) to [out=0,in=270] (6.75,0) -- (6.75,4) to[out=90,in=0] (5.5,4.75) to[out=180,in=90] (5,4.25);
\foreach \i in {1,2,3,4,5}
\draw[black,line width=3pt]
(\i,-0.25) -- (\i,4.25);
\draw[white,line width=2pt,double=black,double distance=3pt]
(0.0,-0.25) to[out=90,in=270] (0.0,0.0) to[out=90,in=270] (3.5,0.875) to[out=90,in=270] (2.5,1.25) to[out=90,in=270] (5.5,1.875) to[out=90,in=270] (0.5,2.75) to[out=90,in=270] (2.5,3.5) to[out=90,in=270] (0.0,4.0) to[out=90,in=270] (0.0,4.25);
\draw[white,line width=2pt,double=black,double distance=3pt]
(2,3.5) -- (2,0.0)
(1,3.0) -- (1,1.5)
(1,4.0) -- (1,3.5)
(3,1.5) -- (3,0.5)
(5,2.0) -- (5,0.0);
\pgfresetboundingbox \clip (-0.5,-2.5) rectangle (8.5,6.5);
\end{tikzpicture}}

\newcommand{\trace}[2]{
\begin{tikzpicture}[line width=1.5pt,black,scale=#2]
\tikzmath{\n = #1;}
\foreach \x [evaluate=\x as \y using {int(mod(\x,2))}] 
in {1,...,\n}{
\draw (0,2*\x+1) to[out=270,in=180] (0.5,2*\x+0.5) -- (\n+0.5,2*\x+0.5) to[out=0,in=90] (\n+1,2*\x) to[out=270,in=0] (\n+0.5,2*\x-0.5) -- (0.5,2*\x-0.5) to[out=180,in=90] (0,2*\x-1) (\x,2*\n+1) -- (\x,0) to[out=270,in=270] (1.5*\n-0.5*\x+2,0) -- (1.5*\n-0.5*\x+2,2*\n+1) to[out=90,in=90] (\x-1,2*\n+1);
}
\draw (0,1) to[out=270,in=180] (0.5,0.5) -- (\n+0.5,0.5) to[out=0,in=90] (\n+1,0) to[in=270,out=270] (\n+1.5,0) -- (\n+1.5,2*\n+1) to[out=90,in=90] (\n,2*\n+1);
\pgfresetboundingbox \clip (-0.5,-0.5*\n-0.5) rectangle (1.5*\n+2,2.5*\n+2);
\end{tikzpicture}}

\newcommand{\iton}[1]{
\begin{tikzpicture}[scale=#1]
\draw[line width=2pt,white,double=black,double distance=2pt] 
(7,1) -- (11,1) 
(7,2) -- (11,2) 
(7,3) -- (11,3)
(7,4) -- (11,4) 
(7,5) -- (11,5) 
(7,6) -- (11,6)
(7,7) -- (11,7);
\draw[line width=2pt,white,double=black,double distance=2pt]
(7,0) to[out=0,in=270]
(8,1) -- (8,7)
to[out=90,in=180] (9,8) to[out=0,in=90] (10,7) -- (10,1) to[out=270,in=180]
(11,0) to[out=0,in=270]
(12,1) -- (12,7)
to[out=90,in=180] (13,8);
\draw[line width=2pt,white,double=black,double distance=2pt]
(9,1) -- (13,1) 
(9,2) -- (13,2) 
(9,3) -- (13,3) 
(11,4) -- (13,4) 
(11,5) -- (13,5) 
(11,6) -- (13,6) 
(11,7) -- (13,7);
\draw[black,line width=3pt,->] 
(4,3.5)--(6,3.5);
\draw[line width=2pt,white,double=black,double distance=2pt] 
(1,1) -- (3,1) 
(1,2) -- (3,2) 
(1,3) -- (3,3);
\draw[line width=2pt,white,double=black,double distance=2pt]
(1,0) to[out=0,in=270]
(2,1) -- (2,7)
to[out=90,in=180] (3,8);
\draw[line width=2pt,white,double=black,double distance=2pt]
(1,4) -- (3,4) 
(1,5) -- (3,5) 
(1,6) -- (3,6) 
(1,7) -- (3,7);
\pgfresetboundingbox \clip (0,-1) rectangle (14,9);
\end{tikzpicture}}

\newcommand{\fence}[3]{
\begin{tikzpicture}[line width = 2.5pt,scale=#1]
\foreach \x in {0, 1, ..., #2}{
\foreach \y [evaluate=\y as \z using {int(mod(\x+\y,2))}] in {1,...,#3}{
\draw (\x,\y) .. controls +(30+120*\z:1) and +(-30-120*\z:1) .. (\x,\y+1);}}
\foreach \x in {2, 4, ..., #2}{
\draw (\x-2,1) .. controls +(-30:0.5) and +(210:0.5) .. (\x-1,1);
\draw (\x,#3+1) .. controls +(150:0.5) and +(30:0.5) .. (\x-1,#3+1);}
\draw (0,#3+1) .. controls +(150:0.5) and +(90:#3+2) .. (-0.4,0.5);
\draw (#2,1) .. controls +(-30:0.5) and +(-90:#3+2) .. (#2+0.4,#3+1.5);
\pgfresetboundingbox \clip (-1,0) rectangle (#2+1,#3+2);
\end{tikzpicture}}

\newcommand{\knit}[3]{
\begin{tikzpicture}[line width = 2.5pt,scale=#1]
\foreach \x in {0,1, ..., #2}{
\foreach \y [evaluate=\y as \z using {int(mod(\y,2))}] in {1,...,#3}{
\draw (\x+1.5*\z,\y) .. controls +(90:2) and +(-90:2) .. (\x+1.5-1.5*\z,\y+1);}}
\foreach \x in {2, 4, ..., #2}{
\draw (\x-0.5,1) .. controls +(-90:0.5) and +(-90:0.5) .. (\x+0.5,1);
\draw (\x,#3+1) .. controls +(90:0.5) and +(90:0.5) .. (\x-1,#3+1);}
\pgfresetboundingbox \clip (0,0.5) rectangle (#2+1.5,#3+1.5);
\end{tikzpicture}}

\newcommand{\net}[3]{
\begin{tikzpicture}[line width = 1.25pt,scale=#1]
\foreach \x in {0,1, ..., #2}{
\foreach \y [evaluate=\y as \z using {int(mod(\y+\x,2))}] in {1,...,#3}{
\draw (\x+1.5*\z,\y) .. controls +(-90:0.95) and +(90:0.95) .. (\x+1.5-1.5*\z,\y+1);}}
\foreach \x in {2, 4, ..., #2}{
\draw (\x-0.5,1) .. controls +(90:0.25) and +(90:0.25) .. (\x-1,1);
\draw (\x,#3+1) .. controls +(-90:0.25) and +(-90:0.25) .. (\x+0.5,#3+1);}
\pgfresetboundingbox \clip (0,0.5) rectangle (#2+1.5,#3+1.5);
\end{tikzpicture}}

\newcommand{\twistA}[1]{
\begin{tikzpicture}[scale=#1]
\foreach \x in {-0.4,-0.2,0,0.2,0.4} {
\draw [white,line width=0.5pt,double=gray,double distance=3.0pt] 
(0.875,\x-0.5) to[out=180,in=-90] (\x,0.25) -- (\x,2.5) .. controls +(0,0.5) and +(0,-0.5) ..  (-\x,4);
\draw[white,line width=0.5pt,double=gray,double distance=3.0pt] (\x,4) .. controls +(0,0.5) and +(0,-0.5) ..  (-\x,5.5) -- (-\x,7.25) to[out=90,in=0] (-0.875,8-\x);}
\foreach \y/\w in {0.5/1,1/1,2.25/1,6.5/1,7/1} {
\draw [white,line width=0pt,double=black,double distance=4.0pt] 
(-\w,\y) -- (\w,\y);
\foreach \x in {-0.4,-0.2,0,0.2,0.4} {
\draw [draw=gray, circle, fill=white] (\x,\y) circle (0.08); \node at (\x,\y) {\tiny $\ast$};}}
\pgfresetboundingbox \clip (1,-1) rectangle (1,9);
\end{tikzpicture}}

\newcommand{\twistB}[1]{
\begin{tikzpicture}[scale=#1]
\draw [white,line width=1.5pt,double=black,double distance=4.0pt] (0.65,4) to[in=0,out=0] 
(0.65,4.5) -- (-1,4.5);
\foreach \x in {-0.4,-0.2,0,0.2,0.4} {
\draw [white,line width=0.5pt,double=gray,double distance=3.0pt] 
(0.875,\x-0.5) to[out=180,in=-90] (\x,0.25) -- (\x,1.5) .. controls +(0,0.5) and +(0,-0.5) ..  (-\x,3) -- (-\x,4.75);
\draw[white,line width=0.5pt,double=gray,double distance=3.0pt] (\x,4.75) .. controls +(0,0.5) and +(0,-0.5) ..  (-\x,6.25) -- (-\x,7.25) to[out=90,in=0] (-0.875,8-\x);}
\foreach \y/\w in {0.5/1,1/1,4/0.65,6.5/1,7/1} {
\draw [white,line width=0pt,double=black,double distance=4.0pt] 
(-\w,\y) -- (\w,\y);
\foreach \x in {-0.4,-0.2,0,0.2,0.4} {
\draw [draw=gray, circle, fill=white] (\x,\y) circle (0.08); \node at (\x,\y) {\tiny $\ast$};}}
\draw [white,line width=1.5pt,double=black,double distance=4.0pt] (-0.65,4) to[in=180,out=180] 
(-0.65,3.5) -- (1,3.5);
\pgfresetboundingbox \clip (-1,-1) rectangle (1,9);
\end{tikzpicture}}

\newcommand{\twistC}[1]{
\begin{tikzpicture}[scale=#1]
\draw [white,line width=1.5pt,double=black,double distance=4.0pt] (0.65,5) to[in=0,out=0] 
(0.65,4.5) -- (-0.65,4.5);
\draw [white,line width=1.5pt,double=black,double distance=4.0pt] (0.65,5.5) to[in=0,out=0] 
(0.65,6) -- (-1,6);
\foreach \x in {-0.4,-0.2,0,0.2,0.4} {
\draw [white,line width=0.5pt,double=gray,double distance=3.0pt] 
(0.875,\x-0.5) to[out=180,in=-90] (\x,0.25) -- (\x,1) .. controls +(0,0.5) and +(0,-0.5) ..  (-\x,2.5);
\draw[white,line width=0.5pt,double=gray,double distance=3.0pt] (\x,2.5) .. controls +(0,0.5) and +(0,-0.5) ..  (-\x,4) -- (-\x,7.25) to[out=90,in=0] (-0.875,8-\x);}
\foreach \y/\w in {0.5/1,1/1,5/0.65,6.5/1,7/1} {
\draw [white,line width=0pt,double=black,double distance=4.0pt] 
(-\w,\y) -- (\w,\y);
\foreach \x in {-0.4,-0.2,0,0.2,0.4} {
\draw [draw=gray, circle, fill=white] (\x,\y) circle (0.08); \node at (\x,\y) {\tiny $\ast$};}}
\draw [white,line width=1.5pt,double=black,double distance=4.0pt] (-0.65,4.5) to[in=180,out=180] 
(-0.65,4) -- (1,4);
\draw [white,line width=1.5pt,double=black,double distance=4.0pt] (-0.65,5) to[in=180,out=180] 
(-0.65,5.5) -- (0.65,5.5);
\pgfresetboundingbox \clip (-1,-1) rectangle (1,9);
\end{tikzpicture}}

\newcommand{\twistD}[1]{
\begin{tikzpicture}[scale=#1]
\foreach \x in {-0.4,-0.2,0,0.2,0.4} {
\draw [white,line width=0.5pt,double=gray,double distance=3.0pt] 
(6.875,\x-0.5) to[out=180,in=-90]
(6+\x,0.25) -- (6+\x,1.5) .. controls +(0,0.5) and +(0,-0.5) ..  (6-\x,3);
\draw[white,line width=0.5pt,double=gray,double distance=3.0pt] (6+\x,3) .. controls +(0,0.5) and +(0,-0.5) ..  (6-\x,4.5);
\draw[white,line width=0.5pt,double=gray,double distance=3.0pt] (0-\x,5.5) --  (-\x,7.25) to[out=90,in=0] (-0.875,8-\x);
\draw[white,line width=0.5pt,double=gray,double distance=3.0pt] (3+\x,4.5) --  (3+\x,5.5);}
\foreach \y/\z in {0.5/6,1/6,5/3,6.5/0,7/0} {
\draw [white,line width=0pt,double=black,double distance=4.0pt] 
(-1,\y) -- (7,\y);
\foreach \x in {-0.4,-0.2,0,0.2,0.4} {
\draw [draw=gray, circle, fill=white] (\z+\x,\y) circle (0.08); \node at (\z+\x,\y) {\tiny $\ast$};}}
\foreach \x in {-0.4,-0.2,0,0.2,0.4} {
\draw [white,line width=0.5pt,double=gray,double distance=3.0pt] 
(6+\x,4.5) -- (6+\x,7.25) to[in=0,out=90] (5.25,8+\x) to[in=90,out=180] (4.5-\x,7.25) -- (4.5-\x,0.25) to[in=0,out=-90] (3.75,-0.5+\x) to[in=-90,out=180] (3+\x,0.25) -- (3+\x,4.5);
\draw [white,line width=0.5pt,double=gray,double distance=3.0pt] 
(3+\x,5.5) -- (3+\x,7.25) to[in=0,out=90] (2.25,8+\x) to[in=90,out=180] (1.5-\x,7.25) -- (1.5-\x,0.25) to[in=0,out=-90] (0.75,-0.5+\x) to[in=-90,out=180] (0+\x,0.25) -- (0+\x,5.5);}
\draw [white,line width=1.5pt,double=black,double distance=4.0pt] (5.25,5) -- (6.75,5);
\draw [white,line width=1.5pt,double=black,double distance=4.0pt] (0.75,5) -- (2.25,5);
\pgfresetboundingbox \clip (-1,-1) rectangle (5,9);
\end{tikzpicture}}

\newcommand{\maxloop}{
\tikz{
\draw[black,line width=1.2pt,double=white,double distance=1.2pt] (0,0.25) .. controls +(180:0.15) and +(180:0.4) .. (0.25,0);
\draw[black,line width=1.2pt,double=white,double distance=1.2pt] (-0.25,0) .. controls +(0:0.4) and +(0:0.15) .. (0,0.25);
\pgfresetboundingbox \clip (-0.25,0) rectangle (0.25,0.3);}}

\newcommand{\minloop}{
\tikz{
\draw[black,line width=1.2pt,double=white,double distance=1.2pt] (0.25,0.25) .. controls +(180:0.4) and +(180:0.15) .. (0,0);
\draw[black,line width=1.2pt,double=white,double distance=1.2pt] (0,0) .. controls +(0:0.15) and +(0:0.4) .. (-0.25,0.25);
\pgfresetboundingbox \clip (-0.25,0.05) rectangle (0.25,0.3);}}

\newcommand{\maxtwist}{
\tikz{
\foreach \x in {1,2,3}{
\draw[white,line width=1pt,double=black,double distance=1pt] (0.1*\x-0.9,0) to[out=90,in=180] (-0.5,0.4-0.1*\x) to[out=0,in=180] (0,0.1*\x);
\draw[white,line width=1pt,double=black,double distance=1pt] (0.5+0.1*\x,0) to[out=90,in=0] (0.5,0.1*\x) to[out=180,in=0] (0,0.4-0.1*\x);}
\pgfresetboundingbox \clip (-0.85,0.075) rectangle (0.85,0.3);}}

\newcommand{\mintwist}{
\tikz{
\foreach \x in {2,1,0}{
\draw[white,line width=1pt,double=black,double distance=1pt] (-0.1*\x-0.6,0.3) to[out=-90,in=180] (-0.5,0.2-0.1*\x) to[out=0,in=180] (0,0.1*\x);
\draw[white,line width=1pt,double=black,double distance=1pt] (0.8-0.1*\x,0.3) to[out=-90,in=0] (0.5,0.1*\x) to[out=180,in=0] (0,0.2-0.1*\x);}
\pgfresetboundingbox \clip (-0.85,0.075) rectangle (0.85,0.3);}}


\subjclass[2010]{57M25}

\keywords{potholder, meander, one-pure braids}

\title{Universal Knot Diagrams}

\pagestyle{plain}

\date{}

\author{
Chaim Even-Zohar
\and 
Joel Hass
\and 
Nati Linial
\and 
Tahl Nowik
}

\begin{document}
\begin{abstract}
We study collections of planar curves that yield diagrams for all knots. In particular, we show that a very special class called {\em potholder} curves carries all knots. This has implications for realizing all knots and links as special types of meanders and braids. We also introduce and apply a method to compare the efficiency of various classes of curves that represent all knots. 
\end{abstract} 

\maketitle

\vspace{-4.9ex}
\section{Introduction}

If a knot $K$ has a diagram whose underlying curve is~$C$ then we say that $K$ is {\em carried} by~$C$. In other words, some choice of the overcrossing and undercrossing arcs at every crossing point of~$C$ yields a diagram of~$K$. We are mainly concerned with infinite families of closed immersed planar curves $\mathcal{U} = \{C_1,C_2,\dots\}$ that eventually carry all knots. We say that $\mathcal{U}$ is {\em universal} if every knot is carried by all but finitely many curves in $\mathcal{U}$. 

The study of universal sequences of curves echoes a central theme in knot theory, of looking for simple or special ways to represent all knots. These include closed braids, grid diagrams, petal diagrams, bridge presentations, and many others.

This notion is also relevant to the evolving research of random knots. A~variety of random models have been proposed and studied recently \cite[see references therein]{even2017models}. One approach is to take a planar curve and choose every crossing independently at random. This produces a distribution over the resulting knot types. A~universal sequence of curves can be used in this fashion to construct distributions that eventually yield all knots. 

\begin{figure}[hb]
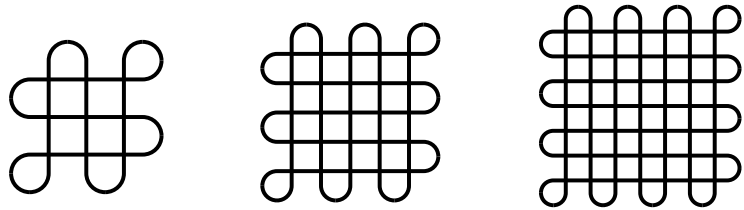

\centering
\potholder{3}{0.5} \;
\potholder{5}{0.39} \;\;\;
\potholder{7}{0.33}
\caption{Potholders}
\label{potholders}
\end{figure}

\bigskip\noindent
{\bf Examples of Universal Diagrams} 

\nopagebreak \medskip \noindent 
\textbf{(1)} The \emph{potholder} curve $P_n$ has $2n+1$ horizontal and $2n+1$ vertical line segments that are connected to form a closed immersed curve with $(2n+1)^2$ crossing points. The source for this name is that these curves resemble potholders used to handle hot pots, or to be placed under them. Figure~\ref{potholders} shows them for $n=1$, $2$, and $3$. 

Knots carried by potholder curves were studied by Grosberg and Nechaev \cite{grosberg1992algebraic, nechaev1996statistics}, under the name {\em lattice knots}. They studied the number of unknots carried by such curves via a connection to the Potts model of statistical mechanics. See also Adams et~al.~\cite{adams2017volume} who used the term potholder for similar curves, and Obeidin~\cite[Figure 14]{obeidin2016volumes}, following Garoufalidis and L{\^e}~\cite{garoufalidis2011asymptotics}. 

W.~Thurston suggested the possibility that diagrams similar to potholders are universal in a discussion of universality at {\it MathOverflow} \cite{38813}. Thurston wrote:
\begin{quotation}
I have a strong hunch that any knot type can be arranged in a form that has projection to one of these trajectories.
\end{quotation}  
The universality of potholders was also raised in~\cite[Problem 4]{burton2016computational}, as were other questions about universality of planar curves that are addressed in this paper. A main result in this work is that this family of curves is universal. Namely, every knot is carried by every large enough potholder. 

\begin{theorem}
\label{allpotholders}
All knots have a potholder diagram.
\end{theorem}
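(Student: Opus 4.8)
The plan is to reduce the statement to a fact about braids and then to realize an arbitrary braid inside a sufficiently large potholder by choosing its crossings appropriately. By Alexander's theorem, any knot $K$ is the closure $\hat\beta$ of some braid $\beta \in B_k$, which I write as a word $\beta = \sigma_{i_1}^{\epsilon_1}\cdots\sigma_{i_\ell}^{\epsilon_\ell}$ in the Artin generators. So it suffices to show that for every $k$ and every such word, the closed braid $\hat\beta$ is carried by $P_n$ for some $n$ depending on $k$ and $\ell$. (Enlarging $P_n$ afterwards only adds trivially resolvable crossings, so existence of one good $n$ is what matters here.)

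First I would fix a baseline resolution that straightens the curve as much as possible and understand the resulting diagram: choosing, say, every vertical strand to pass over every horizontal strand turns the potholder into an unknotted diagram whose underlying curve is a simple serpentine weave. The point of this ground state is that it exhibits the potholder as a bundle of long strands that run essentially parallel across the diagram, together with the forced turn-backs at the top, bottom, and sides. I would use these long parallel strands as the $k$ strands of the braid, and the forced serpentine turn-backs as the arcs that perform the closure of $\hat\beta$.

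Next comes the heart of the construction: implementing the braid word. The key tool is a local move --- an \emph{$x$-move} --- that takes a short segment of the curve passing through a bundle of parallel strands and, by flipping the crossings along that segment, converts it into a full twist of the bundle, that is, a product of braid generators. By selecting which segments of the potholder are \emph{active} in this way, and setting all remaining crossings to the ground-state resolution so that the inactive strands simply slide past one another, I would spell out the generators $\sigma_{i_1}^{\epsilon_1},\dots,\sigma_{i_\ell}^{\epsilon_\ell}$, one per active row. Taking $n$ large enough to provide one active row for each letter of $\beta$, plus enough width to host all $k$ strands, gives a resolution of $P_n$ whose diagram is $\hat\beta$ after discarding the trivial parts.

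The main obstacle is the rigidity of the potholder: unlike a grid diagram or a free braid template, its connectivity is completely fixed by the serpentine pattern, so I cannot choose how strands enter and leave. Thus the real work is twofold. First, I must verify that the forced turn-backs really do implement the braid closure and not some other tangling, which means checking the parity and connectivity of the serpentine --- this is exactly where the odd dimension $2n+1$, making $P_n$ a single closed curve, is used. Second, I must show that every crossing not used to encode a generator can be resolved so that the corresponding leftover arcs form split, unknotted, unlinked pieces that reduce by Reidemeister moves to a single unknotted strand, leaving the knot type equal to $\hat\beta$. Controlling this leftover --- and hence bounding the required $n$ --- is the delicate step; I expect it to be carried out by an explicit sequence of local straightening moves rather than by a closed-form estimate.
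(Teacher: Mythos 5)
There is a genuine gap at the heart of your construction, namely the step where you ``spell out the generators $\sigma_{i_1}^{\epsilon_1},\dots,\sigma_{i_\ell}^{\epsilon_\ell}$'' by flipping crossings along an active row. In a potholder diagram every crossing is an intersection of a horizontal segment with a vertical segment; two vertical segments never cross each other, and likewise for two horizontal ones. Consequently, if your $k$ braid strands are $k$ parallel vertical strands of $P_n$, then \emph{no} choice of over/under data anywhere in the diagram produces a crossing between two strands of the bundle: the diagram restricted to the bundle is crossingless for every resolution. A transversal horizontal segment woven through the bundle can clasp or link with individual strands, but it cannot, at the diagram level, realize $\sigma_i$ or a full twist of the bundle. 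So the proposed ``$x$-move'' is not a valid operation on crossing choices, and the claim that an active row implements a braid letter is exactly the unproven content of the theorem, not a reduction of it. (A symptom of the same obstruction: every resolution of a potholder is automatically a \emph{meander} diagram --- two simple arcs, the union of the horizontals and the union of the verticals --- so one cannot simply ``draw'' an arbitrary braid closure inside it; one must first show the knot admits a diagram of this very restricted combinatorial type.) There is also a secondary issue: the serpentine turn-backs of $P_n$ join the top of one vertical strand to the top of its neighbor, which is a \emph{plat} closure rather than the standard braid closure, so even the framing of the reduction via Alexander's theorem needs adjustment (to Birman's plat-closure analogue).

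The paper's proof takes a genuinely different route that is designed around precisely this obstruction. It first shows (Lemma~\ref{Xmoves} and Proposition~\ref{StandardMeander}) that every knot has a \emph{standard meander} diagram --- two simple arcs $A$ and $B$ meeting at change points on a common face --- via regular homotopies ($\mathfrak{X}$-moves, which create new crossings rather than flip existing ones). A standard meander is encoded by a permutation $\sigma$ recording the order in which $B$ meets $A$, and the potholder diagram is then built explicitly: the crossing at $(i,j)$ is chosen according to whether $j$ is below, at, or above $2\sigma(i)-1$, and an explicit height function on an intermediate diagram $M=H\cup W$ exhibits a three-dimensional isotopy to the potholder resolution. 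If you want to salvage a braid-theoretic approach, the viable target is the $1$-pure braids of Corollary~\ref{StrongAlexander} (one strand winding among straight ones), which is the braid structure a potholder actually supports --- but in the paper that corollary is deduced \emph{from} Theorem~\ref{allpotholders}, not used to prove it.
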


\smallskip
\noindent 
\textbf{(2)} The \emph{star} $S_n$ with $2n+1$ tips is a polygonal curve with $(2n+1)$ segments and $(n-1)(2n+1)$ crossing points, as illustrated in Figure~\ref{stars} for $n=2$, $3$, and~$4$. Star diagrams are special types of closed braids with $n$ strands, and they coincide with $(n,2n+1)$ torus knot projections~\cite{hayashi2012minimal, chang2015electrical}.

\begin{figure}[H]
\centering
\starlet{2}{1.6}
\starlet{3}{1.6}
\starlet{4}{1.6}
\caption{Stars}
\label{stars}
\end{figure}

Stars were shown to be universal by Adams et al.~in \cite{adams2015knot} and \cite[Section 4]{adams2015bounds}. See also~\cite{even2016invariants}.

\smallskip
\noindent 
\textbf{(3)} The trajectory of a billiard ball moving in the unit square with a slope $\pm p/q \in \Q$ forms the polygonal $p\,{:}\,q$ \emph{billiard curve}. The billiard curves of slope $5\,{:}\,2$, $7\,{:}\,3$, and~$9\,{:}\,4$ are depicted in Figure~\ref{billiards}.

\begin{figure}[H]
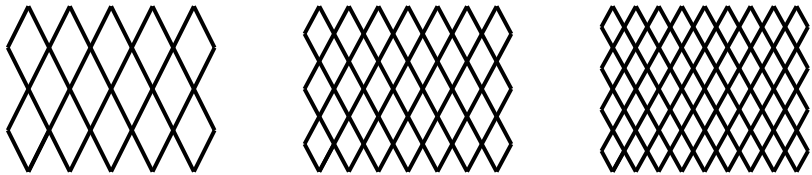

\centering
~ \\
\billiard{2}{2.75} \;\;\;
\billiard{3}{2.75} \;\;\;
\billiard{4}{2.75}
\caption{Billiard curves}
\label{billiards}
\end{figure}

The billiard curve is plane-isotopic to the well-studied \emph{Lissajous curve} or \emph{harmonic curve} in two dimensions~\cite{lissajous1857memoire,comstock1897real}. Knots carried by these curves or similar ones were studied by several researchers under the names \emph{billiard knots}, \emph{harmonic knots}, \emph{Lissajous knots}, \emph{Fourier knots}, \emph{Chebyshev knots}, and \emph{checkerboard knot diagrams}~\cite{kauffman1997fourier, jones1998lissajous, trautwein1995harmonic, buck1994random, bogle1994lissajous, lamm1997there, boocher2009sampling, koseleff2011chebyshev, cohen2015random, soret2016lissajous, rivin2016random}. 
  
Every knot is carried by a billiard curve, for some $p/q \in \Q$~\cite{koseleff2011chebyshev,lamm2012fourier}. In Figure~\ref{starbilliard} below, we show how every $(2n+1)$-star knot is carried by a \mbox{$(2n+1)\,{:}\,n$} billiard curve as in Figure~\ref{billiards}. Therefore, this sequence $B_n$ of billiard curves is also universal.

\smallskip
\noindent 
\textbf{(4)} Let $D_1, D_2, \dots$ be the underlying curves of minimum-crossing diagrams of all knot types, ordered in some way that is non-decreasing on crossing numbers. This sequence is far from being universal, since many knots are {\em not} carried by all but a finite subset of the curves. For example, consider the infinitely many diagrams that only carry two-bridge knots.

However, the sequence of connected sums of the first $n$ knot projections, $C_n = D_1 \# D_2 \# \,\cdots\, \# D_n$ is universal. Indeed, any knot is obtained from some $D_i$ with some choice of crossings, and the other crossings can always be chosen to make the remaining summands unknotted. We ignore the ambiguity in the definition of the connected sum of planar curves, as any choice for it would work. 

\medskip
These four examples illustrate several aspects of the notion of universality, and raise further questions. Universality is a qualitative property and quantitative aspects of it are of interest. Thus even though the last example is universal, it seems quite {\em inefficient} in carrying all knots. We will shortly return to this issue. 

Clearly a finite union of universal sets of curves is universal. However, each of the examples above seems as one {\em increasing} object, that perhaps even {\em converges}. Precise statements on such properties will be given below.

It is a major challenge to find simple properties that {\em characterize} universal curves. In  Sections~\ref{zigzag}-\ref{questions} we suggest some necessary and sufficient conditions, and examine further examples. We discuss open questions of this type and directions for future research.

\medskip
We assume throughout that the curves and knot diagrams we work with are regular and have transverse intersections in a finite number of double points.

\bigskip\noindent
{\bf Efficiency of Universal Diagrams}

\nopagebreak \medskip\noindent
Let $f:\N \to \R$. A set of curves $\mathcal{U}$ is \emph{$f$-efficient} if the curves with up to~$x$ crossings carry all knots with up to~$f(x)$ crossings. In other words, for every $x \in \N$, every knot~$K$ with up to $f(x)$ crossings is carried by some curve $C \in \mathcal{U}$ with at most~$x$ crossings. We say that $f$ is {\em the efficiency} of $\mathcal{U}$, if for every~$x \in \N$ the value of $f(x) \in \N$ is the largest for which the above condition holds. Note that this definition makes sense not only for a universal set, but for any set of curves that carry all knots. 

Let us demonstrate this notion of efficiency with the above examples of universal sets of curves.

\smallskip
\noindent 
\textbf{(1)} 
It is shown in~\cite{even2017distribution} that any $n$-crossing knot $K$ can be isotoped to a $(2n+1)$-petal diagram. It follows that $K$ is carried by a $(2n+1)$-star, with $\leq 2n^2$ crossing points. Thus the star curves $S_n$ are $\sqrt{x/2}$-efficient.

This efficiency is tight up to a multiplicative constant. This means that there exist $n$-crossing knots that are only carried by stars with $\Omega(n^2)$ crossings. One can show this, for example, using the additivity of the bridge index under the connected sum. In conclusion,

\begin{proposition*}
The efficiency of the star curves $S_n$ is $\Theta(\sqrt{x})$.
\end{proposition*}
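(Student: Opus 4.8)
The plan is to establish matching upper and lower bounds on the efficiency $f$. The lower bound $f(x) = \Omega(\sqrt{x})$ is exactly the content of the petal-diagram argument already recorded above: every $n$-crossing knot admits a $(2n+1)$-petal diagram, hence is carried by the star $S_n$, whose crossing number is $(n-1)(2n+1) \leq 2n^2$. Reading this the other way, a star with at most $x$ crossings carries every knot of crossing number at most $\sqrt{x/2}$, so $f(x) \geq \sqrt{x/2}$. It remains to prove the complementary bound $f(x) = O(\sqrt{x})$, that is, to exhibit for each $x$ a knot with only $O(\sqrt{x})$ crossings that no star with at most $x$ crossings can carry.

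The engine for the upper bound is the bridge index. First I would record the structural fact, noted above, that $S_n$ is a closed braid on $n$ strands; consequently every knot carried by $S_n$, for any choice of over- and undercrossings, is the closure of an $n$-braid and therefore has bridge number at most $n$. Thus a star with at most $x$ crossings, having $n = O(\sqrt{x})$ strands, can only carry knots of bridge number $O(\sqrt{x})$.

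Next I would produce knots of small crossing number but large bridge number. Take $K_m$ to be the connected sum of $m$ trefoils. Each trefoil has bridge number $2$, so by Schubert's additivity of the bridge index under connected sum, $b(K_m) = m+1$, while $K_m$ has a diagram with only $3m$ crossings. Combining, if $K_m$ is carried by $S_n$ then $m+1 \leq n$. Choosing $m$ proportional to $\sqrt{x}$ yields a knot with $O(\sqrt{x})$ crossings whose bridge number exceeds the $O(\sqrt{x})$ bound above for any star of at most $x$ crossings. Hence $K_m$ is not carried by any such star, giving $f(x) = O(\sqrt{x})$ and, together with the lower bound, $f = \Theta(\sqrt{x})$.

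The main obstacle, and really the only nonroutine input, is the bridge-number bound for knots carried by a star. Everything else is either quoted (the petal result and the braid description of stars) or classical and essentially combinatorial (Schubert additivity and the crossing counts). So the crux is to justify carefully that realizing a knot as the closure of an $n$-strand braid forces bridge number at most $n$, which I would do by putting the closed braid in a Morse position whose only maxima are the $n$ closure arcs over the top.
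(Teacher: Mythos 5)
Your proposal is correct and follows essentially the same route as the paper: the lower bound comes from the petal-diagram result (an $n$-crossing knot is carried by $S_n$, which has at most $2n^2$ crossings), and the upper bound from Schubert's additivity of the bridge index, which the paper explicitly cites as the intended argument. You have merely filled in the details the paper leaves as a sketch, namely that $S_n$ is a closed $n$-braid so carries only knots of bridge number at most $n$, while connected sums of $m$ trefoils have bridge number $m+1$ with only $3m$ crossings.
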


\begin{figure}[b]
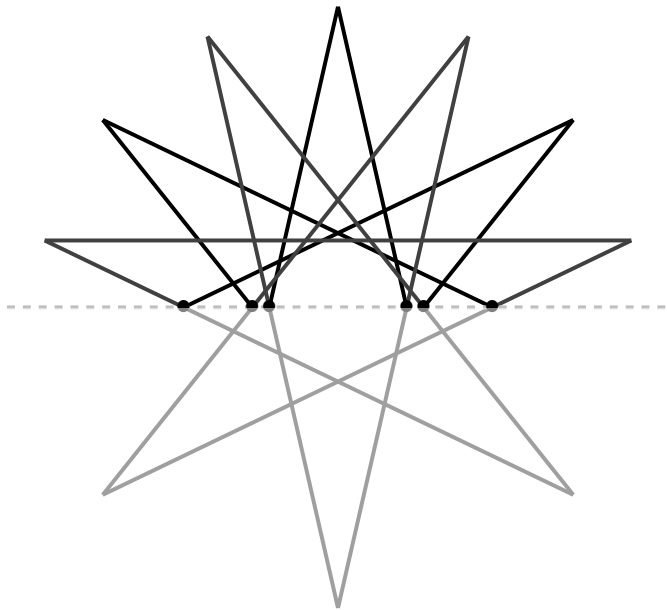

\centering
~ \\
\starbilliard{4.0}
\caption{Folding the $7$-star to a $7\,{:}\,3$ billiard curve, shown in the upper half plane. The $6$ points on the bisecting axis correspond to the $3$ left and $3$ right extremes of the billiard trajectory.}
\label{starbilliard}
\end{figure}

\smallskip
\noindent 
\textbf{(2)} 
It is possible to turn a $(2n+1)$-star diagram into a billiard diagram by folding across a bisecting axis as shown in Figure~\ref{starbilliard}. This yields a~\mbox{$(2n+1)\,{:}\,n$} billiard curve with $4n^2-n-1$ crossings. It is easy to see that a folded curve carries all knots that are carried by the original one. Therefore, these curves are $\sqrt{x/4}$-efficient. Tightness up to a constant follows similar to star curves.

\begin{proposition*}
The efficiency of the \mbox{$(2n+1)\,{:}\,n$} billiard curves $B_n$ is $\Theta(\sqrt{x})$. 
\end{proposition*}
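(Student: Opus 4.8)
The bound splits into a lower and an upper estimate for the efficiency function $f$. The lower estimate, $f(x) = \Omega(\sqrt{x})$, is already contained in the discussion preceding the statement: the $(2n+1)$-star carries every $n$-crossing knot and folds to the $(2n+1):n$ billiard curve $B_n$, which has $4n^2-n-1$ crossings and carries at least the same knots. Hence $B_n$ carries all $n$-crossing knots while having $\Theta(n^2)$ crossings, so curves with up to $x$ crossings carry all knots with up to $\sqrt{x/4}$ crossings. What remains is the matching upper bound $f(x) = O(\sqrt{x})$, that is, tightness, which I would establish exactly as for the star curves, through the bridge index.

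The crucial step is to show that every knot carried by $B_n$ has bridge number $O(n)$. I would read the horizontal coordinate of the unit square as a Morse height function on the billiard curve. Its local maxima are precisely the points where the trajectory touches the right-hand edge, and for slope $(2n+1):n$ there are exactly $n$ of these (the $n$ right extremes visible in Figure~\ref{starbilliard}). The number of maxima is a property of the underlying immersed curve and is unaffected by how the crossings are resolved, since resolving a crossing changes only the local embedding into $3$-space and not the critical points of the planar projection. Therefore, for every choice of over/under information, the resulting knot is presented with $n$ maxima, so its bridge number is at most $n$.

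I would then exhibit a family of low-crossing knots of large bridge number, using the additivity of the bridge index under connected sum, $\mathrm{br}(K_1 \# K_2) = \mathrm{br}(K_1) + \mathrm{br}(K_2) - 1$. Let $K_m$ be the connected sum of $m$ trefoils. It has a diagram with $3m$ crossings, so its crossing number is at most $3m$, while its bridge number is $\mathrm{br}(K_m) = m+1$. If $K_m$ were carried by $B_n$, then the bridge bound forces $m+1 \le n$, so $n = \Omega(m)$ and $B_n$ has $\Omega(m^2)$ crossings. Thus a knot with at most $3m$ crossings can only be carried by billiard curves with $\Omega(m^2)$ crossings. Feeding this into the definition of efficiency: if curves with up to $x$ crossings carry all knots with up to $f(x)$ crossings, then taking $m \approx f(x)/3$ gives $x = \Omega(f(x)^2)$, hence $f(x) = O(\sqrt{x})$. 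Combined with the lower estimate, this yields $f(x) = \Theta(\sqrt{x})$.

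The main obstacle is the bridge bound, and specifically the claim that the maxima count is intrinsic to the curve and thus bounds the bridge number uniformly over all knots the curve carries; once this is pinned down, the trefoil connected-sum family together with additivity of the bridge index makes the tightness argument routine, mirroring the star case almost verbatim. A minor point to verify is that the connected sum of $m$ trefoils has crossing number $\Theta(m)$, which holds because these knots are alternating and the crossing number is additive for alternating knots.
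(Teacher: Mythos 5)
Your proposal is correct and follows essentially the same route as the paper: the lower bound comes from folding the universal $(2n+1)$-star into $B_n$ with $O(n^2)$ crossings, and tightness comes from bounding the bridge number of any knot carried by $B_n$ by the number of maxima of the horizontal height function (which is $n$ and is unchanged by the choice of crossings, since lifting does not alter the planar projection), then feeding in connected sums of trefoils via Schubert's additivity of the bridge index. The paper only sketches the tightness step ("follows similar to star curves \dots using the additivity of the bridge index under the connected sum"), and your write-up supplies exactly the details it leaves implicit.
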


This argument improves in general the efficiency that follows from previous proofs that billiard curves represent all knots. For example, Lamm's argument in~\cite{lamm2012fourier} yields a $p\,{:}\,q$ billiard diagram with $O(p^2r)$ crossings where $r$ is the length of a $p$-strand braid representation of the knot. 

Note also that the above construction has the feature that the ratio $p\,{:}\,q$ is bounded away from $0$ and $\infty$, and converges to $2$. We remark that there is nothing very special about the sequence \mbox{$(2n+1)\,{:}\,n$}. In fact, there are similar reductions from $p\,{:}\,q$ to $(p+q)\,{:}\,q$, or to $p\,{:}\,(mq)$ if $m$ is relatively prime to $2p$. One can use such arguments to show that the \mbox{$(3n+1)\,{:}\,3n$}, and therefore \mbox{$(n+1)\,{:}\,n$} billiard curves are universal as well.

\smallskip
\noindent 
\textbf{(3)} 
The efficiency of the cumulative connected sum $C_n$ defined above is much lower than that of stars and billiard curves. It is known that the number of knots with up to~$n$ crossings is exponential in~$n$. Thus this collection of curves is $\Omega(\log x)$-efficient.

This cannot be improved by more than a constant factor, as the first prime knot with $n$ crossings is not carried until all summands with less than $n$ crossings have occurred. 

\begin{proposition*}
The efficiency of the cumulative connected sums $C_n$ is $\Theta(\log x)$.
\end{proposition*}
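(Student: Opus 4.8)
The plan is to prove the two bounds separately, writing $c(K)$ for the crossing number of a knot $K$ and $k(m)$ for the number of knot types with $c(K)\le m$. The one external input I would invoke is the exponential growth of the count of knots: there are constants $c_0>1$ and $C$ such that the number of knots with crossing number exactly $m$ is at least $c_0^{\,m}$, while $k(m)\le C^{\,m}$ (the upper bound is the fact quoted in the text; the lower bound follows already from counting a single family such as rational or $(2,m)$-torus knots). I also record two elementary facts: the underlying curve of $C_n$ has $\sum_{i=1}^n c(K_i)$ double points, and choosing the crossings of $C_n=D_1\#\cdots\#D_n$ independently in each summand yields the connected sum of the knots carried by the individual $D_i$.

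For the lower bound $f(x)=\Omega(\log x)$, I would observe that all knots with $c(K)\le m$ occur among $K_1,\dots,K_{k(m)}$, and that $C_{k(m)}$ carries each of them: realize the desired knot in its own summand and unknot every other summand, since every diagram carries the unknot via a descending choice of crossings. The number of crossings of $C_{k(m)}$ is $\sum_{i=1}^{k(m)}c(K_i)\le m\,k(m)\le m\,C^{\,m}\le (C+1)^{\,m}$ for large $m$. Hence whenever $m\le \log x/\log(C+1)$ the curve $C_{k(m)}$ has at most $x$ crossings and carries every knot with at most $m$ crossings, so $f(x)\ge m$, giving $f(x)=\Omega(\log x)$.

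For the upper bound $f(x)=O(\log x)$, I would fix for each $m\ge 3$ a \emph{prime} knot $P$ with $c(P)=m$ (for instance a $(2,m)$-torus knot for odd $m$, or a twist knot). The crucial step is that $C_n$ can carry $P$ only through a single summand: a crossing choice on $C_n$ produces a connected sum $K'_1\#\cdots\#K'_n$, and by uniqueness of prime decomposition this equals the prime $P$ only if exactly one factor is $P$ and the rest are unknots. Since any diagram carrying $P$ has at least $c(P)=m$ double points, the distinguished summand $D_j$ satisfies $c(K_j)\ge m$, forcing $j>k(m-1)$ and hence $n\ge k(m-1)+1$. Therefore any $C_n$ carrying $P$ contains all of $D_1,\dots,D_{k(m-1)}$ and so has at least $\sum_{i=1}^{k(m-1)}c(K_i)\ge (m-1)\,c_0^{\,m-1}$ crossings, which is exponential in $m$. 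Consequently, if $f(x)=m$ then $P$ is carried by a curve with at most $x$ crossings, whence $x\ge (m-1)\,c_0^{\,m-1}$ and $m\le 1+\log x/\log c_0=O(\log x)$. Combining the two bounds yields $f(x)=\Theta(\log x)$.

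I expect the main obstacle to be the structural step in the upper bound: justifying carefully that the crossings of a connected-sum curve may be chosen independently in the summands, so that the knots carried by $C_n$ are precisely the connected sums of knots carried by the pieces, and then using primeness together with uniqueness of prime decomposition to isolate a single summand. Everything else reduces to the (known) exponential estimates on $k(m)$ and routine arithmetic with $\sum_i c(K_i)$.
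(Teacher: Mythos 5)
Your proposal is correct and follows essentially the same route as the paper: the lower bound comes from the exponential count of knot types (realize the target knot in its own summand and unknot the rest), and the upper bound comes from observing that a prime knot with $m$ crossings cannot be carried until every summand with fewer than $m$ crossings has already appeared, forcing exponentially many crossings. The paper states this in two sentences; your write-up simply makes explicit the uniqueness-of-prime-decomposition step and the arithmetic that the paper leaves implicit.
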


\smallskip
\noindent 
\textbf{(4)}
Despite their superficial similarity with stars and billiard curves, the efficiency of potholders is presently unknown. 

\begin{question*}
How efficient are potholder diagrams?
\end{question*}

Out of the 84 knots with up to nine crossings, 82 are already carried by the \mbox{5-by-5} potholder curve~\cite{even2018diagram}. However, the reductions in our proofs can only show that potholders are $\Omega(\log x)$-efficient. See Corollary~\ref{meander-log}.

\medskip
More generally, it is natural to ask how efficient any universal sequence can be, whether it is $O(\sqrt{x})$, or~$\,x-\Omega(1)$, or anything in between. In Section~\ref{questions} we formulate this question precisely, and discuss some alternative approaches to universality and efficiency.

\bigskip \noindent 
{\bf Convergence of Universal Diagrams}

\nopagebreak \medskip \noindent
The set of planar curves is pre-ordered by the following relation. We write $C \leq C'$ if every knot that is carried by $C$ is also carried by $C'$. We say that a sequence of planar curves is \emph{increasing} if it is linearly ordered by this relation. This relation is dual in a sense to one studied by Kouki Taniyama, who defined a partial order on knots based on the curves that carry them~\cite{taniyama1989partial}.

One can consider another relation between curves, the partial order by inclusion. A~curve~$C$ is a \emph{subcurve} of~$C'$, denoted $C \subseteq C'$, if~$C$ is planar isotopic to a curve that is obtained from~$C'$ by one or more iterations of the following move. Cut $C'$ at two points adjacent to the same face, take one of the resulting two parts, and close it with a shortcut through that face. A~special case is when the two cutting points and the shortcut coincide at a crossing. For example $S_n \subseteq S_{n+1}$ as demonstrated by Figure~\ref{starincrease}.

\begin{figure}[b]
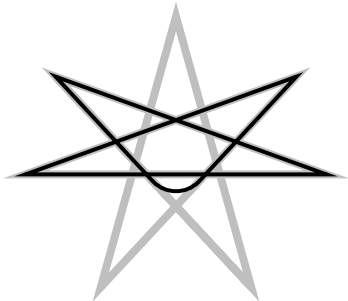

\centering
\starincrease{2}
\caption{The $5$-star is a subcurve of the $7$-star}
\label{starincrease}
\end{figure}

Observe that if $C \subseteq C'$ then $C \leq C'$. Indeed, if a knot is carried by~$C$, then it is also carried by~$C'$, with the same choice of crossings in the part that corresponds to the specified subcurve~$C$. The remaining part of~$C'$ can be made redundant if it is chosen to be decreasing and always passing under~$C$.

These relations let us refine our notion of a universal sequence. Recall the four examples mentioned in the introduction: potholder, stars, billiard curves, and cumulative connected sums. For each one of these sequences, one can verify that every curve is a subcurve of the subsequent one. The following proposition follows.

\begin{proposition*}
The universal sequences $P_n$, $S_n$, $B_n$ and $C_n$ are increasing.
\end{proposition*}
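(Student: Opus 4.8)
The plan is to reduce the statement to the subcurve relation. Recall from the discussion preceding the proposition that $C \subseteq C'$ implies $C \leq C'$, and that both $\subseteq$ and $\leq$ are transitive (the latter because "carried by" propagates along the relation). Hence, to prove that a sequence $X_1, X_2, \dots$ is increasing it suffices to exhibit, for every $n$, a single-step inclusion $X_n \subseteq X_{n+1}$: the resulting chain $X_1 \subseteq X_2 \subseteq \cdots$ yields $X_i \leq X_j$ whenever $i \le j$, so any two members of the sequence are comparable under $\leq$, which is exactly the meaning of increasing. I would therefore treat each of the four families separately and verify this one inclusion.

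First I would dispose of the two easy cases. For the stars, the inclusion $S_n \subseteq S_{n+1}$ is precisely what Figure~\ref{starincrease} exhibits, so nothing further is needed there. For the cumulative connected sums, $C_{n+1} = C_n \# D_{n+1}$ by definition; cutting the curve at the two points where the summand $D_{n+1}$ is attached and closing one part with a shortcut across the connecting face deletes $D_{n+1}$ and returns a curve planar-isotopic to $C_n$. This is a single application of the subcurve move, so $C_n \subseteq C_{n+1}$.

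The potholder and billiard cases carry the substance of the argument. For the potholders, passing from $P_{n+1}$ back to $P_n$ amounts to stripping off the outermost frame of the $(2n+3)\times(2n+3)$ weave so as to leave the $(2n+1)\times(2n+1)$ weave. I would describe how a bounded number of subcurve cuts, each made across an outer face of $P_{n+1}$, successively remove the extra boundary row and column together with the turnbacks that join them, and then check that what remains is planar-isotopic to $P_n$. The billiard case I would handle in the same spirit, showing that the $(2n+1):n$ trajectory sits inside the $(2n+3):(n+1)$ trajectory by cutting along suitable near-boundary faces of the square and closing up to reproduce the smaller slope pattern. A convenient alternative is to route the billiard inclusions through the folding construction of Figure~\ref{starbilliard}, which realizes $B_n$ as a folded $S_n$, combined with $S_n \subseteq S_{n+1}$, once one checks that the folding respects the subcurve relation.

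The main obstacle is making the potholder and billiard trimming moves combinatorially precise: specifying exactly which pairs of cut points lie on a common face and verifying that the shortcuts and the subsequent planar isotopy recover the smaller curve without creating or destroying crossings. Each such verification is elementary and is most cleanly presented by a figure, in direct analogy with Figure~\ref{starincrease} for the stars. The transitivity reduction above then immediately upgrades these single-step inclusions to the assertion that all four sequences $P_n$, $S_n$, $B_n$, and $C_n$ are increasing.
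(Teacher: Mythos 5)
Your proposal is correct and follows essentially the same route as the paper, which likewise derives the proposition from the one-step subcurve inclusions $X_n \subseteq X_{n+1}$ together with the observation that $\subseteq$ implies $\leq$; the paper merely asserts that "one can verify" these inclusions (illustrating only the star case in Figure~\ref{starincrease}), whereas you sketch the verifications in more detail. No substantive difference in approach.
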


In some sense, this means that such a set of planar curves can be viewed as a single ``limit'' object. A more precise and slightly stronger formulation of this property is as follows. We say that a sequence of curves {\em converges} if there exists an infinite curve $C : \R \to \R^2$ such that the $n$th curve is the restriction of~$C$ to~$[-n,n]$. Note that the crossing points of this curve $C$ might not be a discrete set in the plane. The following proposition is easily verified, similarly to the previous one.

\begin{proposition*}
The universal sequences $P_n$, $S_n$, $B_n$ and $C_n$ converge.
\end{proposition*}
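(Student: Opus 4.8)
The plan is to produce, for each of the four sequences, a single properly immersed curve $C:\R\to\R^2$ together with a parametrization under which the restriction $C|_{[-n,n]}$, closed up by a shortcut through the outer face, is planar isotopic to the $n$th curve in the sequence. This is exactly the content of convergence, and it refines the preceding increasing property: there, each curve was only shown to be a subcurve of its successor, whereas here these subcurve realizations must be made mutually compatible across all $n$ at once, so that they glue into one ambient curve.

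First I would fix, in each sequence, a central arc common to every curve, to serve as the image of a neighborhood of the parameter origin. Recall that the increasing property was established by exhibiting each curve as a subcurve of the next, and in every one of the four cases (see e.g.\ Figure~\ref{starincrease} for the stars) the smaller curve sits as an inner portion of the larger, the passage from the $n$th to the $(n+1)$st curve adding new segments around the outside. I would then parametrize the $n$th curve as $\gamma_n:[-n,n]\to\R^2$ symmetrically about this central arc, arranging that the two new stretches appearing in the $(n+1)$st curve are traversed over the parameter ranges $[-(n+1),-n]$ and $[n,n+1]$, so that $\gamma_{n+1}$ restricted to $[-n,n]$ reproduces $\gamma_n$ up to the closing shortcut.

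With such a nested family of parametrizations in hand, the infinite curve is defined by $C(t)=\gamma_n(t)$ for any $n\ge|t|$; the nesting guarantees that this is well defined and independent of the choice of $n$, and by construction $C|_{[-n,n]}$ closes up to the $n$th curve. For the cumulative connected sums $C_n=D_1\#\cdots\#D_n$ the compatible parametrization is immediate: one appends the projection $D_{n+1}$ at an outer end, leaving the inner summands untouched, so the limit curve is simply the infinite connected sum. For the stars one reads the required outward growth off Figure~\ref{starincrease}, and the billiard case then follows from the folding map of Figure~\ref{starbilliard}, which converts a star into the corresponding billiard curve and commutes with this growth.

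The main obstacle is the potholder sequence: unlike the others, the central-arc-plus-outer-layer description of $P_n\subseteq P_{n+1}$ must be turned into an explicit, parametrization-preserving nesting, because the serpentine structure of $P_{n+1}$ interleaves its new horizontal and vertical runs with the old ones rather than simply surrounding them. I would handle this by locating the two points at which $P_{n+1}$ is cut and shortcut to produce $P_n$, verifying that these cuts lie at the two ends of the symmetric parametrization, and confirming that the newly inserted runs are traversed entirely within $[-(n+1),-n]\cup[n,n+1]$. Once this bookkeeping is checked uniformly in $n$, the limit curve exists in all four cases and the proposition follows.
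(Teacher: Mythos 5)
Your proposal is correct and matches the paper's approach: the paper gives no argument beyond ``easily verified, similarly to the previous one,'' i.e.\ it relies on exactly the subcurve nestings $P_n\subseteq P_{n+1}$, $S_n\subseteq S_{n+1}$, etc., made compatible into a single parametrization, which is what you carry out. Your elaboration---in particular the observation that the cut points must sit at the ends $\pm n$ of the parametrization uniformly in $n$, and that the potholder case (new material forming one arc at the far corner) is the one requiring this check---is a faithful filling-in of the verification the authors leave to the reader.
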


See~\cite{champanerkar2016geometrically} for another notion of convergence for knot diagrams.

\bigskip\noindent
{\bf Universality of Potholders and Meanders}

\nopagebreak \medskip\noindent
The {\em simple arc index} of an immersed curve is the minimal $k$ such that the curve can be split into $k$ simple arcs. An immersed curve with simple arc index two is called a {\em meander}. A knot diagram with simple arc index two is called a {\em meander diagram}. Figure~\ref{figureeight} shows such simple arcs in two different diagrams of the figure-eight knot.

\begin{figure}[b]
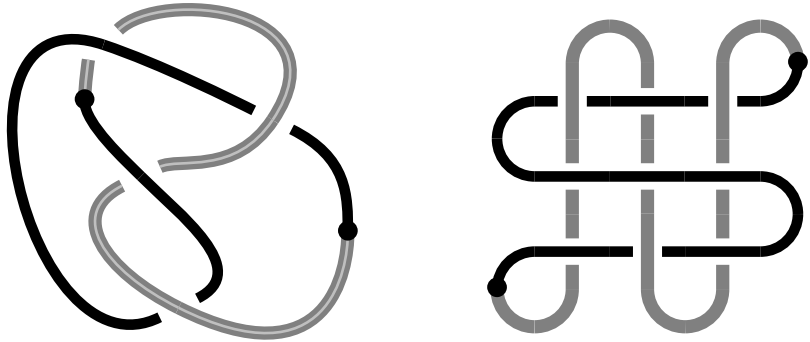

\centering
~ \\
\figureeight{1}
\potholderfigure{1}
\caption{Meander and potholder diagrams of the figure-eight knot}
\label{figureeight}
\end{figure}

The term ``meander'' is used elsewhere in mathematics to describe the intersection pattern of two self-avoiding planar curves, as these resemble a road crossing a meandering river~\cite{di2000folding, la2003approaches}. The use of this term in the context of knot and link diagrams originates in the work of Kappraff, Radovic and Jablan \cite{radovic2015meander, kappraff2016meanders}. 

It is natural to ask which knots can be represented by a meander diagram. The following result in this direction was established by von Hotz~\cite{von1960arkadenfadendarstellung} and by Ozawa~\cite{ozawa2007edge}. See also the more recent works,~\cite{adams2011complementary}, \cite{owad2018straight} and~\cite{belousov2018meander}. 

\nopagebreak
\begin{theorem}
\label{allmeanders}
All knots have a meander diagram.
\end{theorem}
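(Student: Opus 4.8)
The plan is to deduce Theorem~\ref{allmeanders} directly from Theorem~\ref{allpotholders}, by observing that the potholder curve is itself a meander. First I would verify that the underlying curve of every $P_n$ has simple arc index two. Following the curve, the $2n+1$ vertical segments, together with the turnbacks along the top and bottom, assemble into a single serpentine that visits each column once and never crosses itself; symmetrically, the $2n+1$ horizontal segments with the turnbacks along the left and right assemble into a second self-avoiding serpentine. These two arcs are joined to each other at exactly two points (the two corners where a turnback switches from vertical to horizontal), and they meet one another transversally precisely at the $(2n+1)^2$ grid points. Cutting the closed curve at those two corners therefore exhibits $P_n$ as a union of two simple arcs, so $P_n$ is a meander and every potholder diagram is a meander diagram. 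This is already visible in the two pictures of the figure-eight knot in Figure~\ref{figureeight}.

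Granting this, Theorem~\ref{allmeanders} is immediate: by Theorem~\ref{allpotholders} every knot $K$ is carried by some $P_n$, and the crossing assignment witnessing this turns the same two serpentine arcs into a meander diagram of $K$. Since the splitting into the horizontal and the vertical serpentine does not depend on the over/under data, the freedom exploited in Theorem~\ref{allpotholders} is exactly the freedom available in the meander picture. In particular, this recovers the classical theorem of von Hotz and of Ozawa as a corollary of the (stronger) universality of potholders.

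The step that carries all the weight is the requirement that the simple arc index be \emph{exactly two}, rather than merely finite. This is what makes Theorem~\ref{allmeanders} a genuine statement: a generic route to a low-complexity presentation, such as a bridge or plat presentation with bridge number $b$, only decomposes the curve into $O(b)$ simple arcs, and collapsing these down to two is the real difficulty. The potholder resolves this for free through its rigid grid structure, which is why I would present the reduction above as the main argument. The one caveat is circularity: if the proof of Theorem~\ref{allpotholders} is itself routed through meanders, then this derivation cannot stand alone, and one must instead build the two serpentine arcs directly from a presentation of $K$ --- the hard part there being, again, to close the winding ``road'' arc into a single embedded curve that crosses the straight ``river'' arc, while the crossing choices still reproduce $K$ rather than a simplification of it.
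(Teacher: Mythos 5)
Your reduction is circular in exactly the way you feared, and the caveat you append at the end is not a side remark but the fatal flaw. In this paper Theorem~\ref{allpotholders} is proved by starting from a \emph{standard meander diagram} of $K$ (Proposition~\ref{StandardMeander}), which in turn begins with ``an arbitrary meander diagram $D$ for a knot $K$'' --- that is, it presupposes Theorem~\ref{allmeanders}. So the implication you propose, ``potholders are meanders, hence universality of potholders implies universality of meanders,'' while logically valid as a one-line observation (and your verification that $P_n$ splits into the horizontal and vertical serpentines is correct, and is indeed noted in the paper), cannot serve as a proof of Theorem~\ref{allmeanders} here. You correctly identify that in this situation ``one must instead build the two serpentine arcs directly from a presentation of $K$,'' but you do not do so; that construction is the entire content of the theorem, and your proposal omits it.

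The paper's actual argument is a direct one: color a basepoint of an arbitrary diagram red and the rest black, and repeatedly extend the red arc; whenever the next crossing along the black arc is black--black, perform a sequence of $\mathfrak{X}$-moves (Figure~\ref{xmove}) along the embedded black segment preceding it, which eliminates one black--black crossing at the cost of at most tripling the number of red--black crossings (Lemma~\ref{Xmoves}). Each $\mathfrak{X}$-move lifts to an ambient isotopy in $\R^3$ for every choice of over/under data at the two crossings involved, so the knot type is preserved and the process terminates in a meander diagram of $K$. Nothing resembling this inductive elimination of black--black crossings appears in your proposal, so the proof has a genuine gap: the key idea is missing, not merely deferred.
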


Theorem~\ref{allmeanders} follows from a simple construction that starts with an arbitrary knot diagram and produces a meander diagram of the same knot. We provide a detailed proof in Section~\ref{meander} so that we can analyze and extend it. 

The second diagram in Figure~\ref{figureeight} demonstrates that potholder diagrams are a special class of meander diagrams. Indeed, one arc includes the horizontal segments and the other arc includes the vertical ones. The two change points are at the lower left and upper right corners. In Section~\ref{sec:potholder} we strengthen Theorem~\ref{allmeanders} and establish the universality of potholders.

\begin{theorem*}
[\bf \ref{allpotholders}]
All knots have a potholder diagram.
\end{theorem*}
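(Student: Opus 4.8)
The plan is to upgrade the meander diagram supplied by Theorem~\ref{allmeanders} into a potholder diagram, exploiting the observation already made in the text that a potholder is itself a meander whose two simple arcs are the union of its horizontal segments and the union of its vertical segments. Given a knot $K$, I would first fix a meander diagram $D$ of $K$ and put it in a convenient normal form. Draw the vertical arc of $D$ as a comb of parallel vertical segments joined only by nested arcs at the top and bottom, and let the horizontal arc thread left-to-right and back across this comb at successive heights. After an isotopy this makes every crossing of $D$ lie on a rectangular grid, with the horizontal arc meeting a contiguous block of vertical segments at each height; note that a horizontal arc running back and forth in this way is exactly a comb of horizontal segments, matching the horizontal arc of a potholder.

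Next I would embed this comb-form meander into the complete grid of a large potholder $P_n$. Since the potholder grid contains one crossing for every pair consisting of a horizontal and a vertical strand, the $k$ genuine crossings of $D$ can be placed at prescribed grid positions, arranged along a staircase dictated by the permutation that records, step by step, which vertical segment the horizontal arc crosses next. The remaining grid positions, together with any extra horizontal levels and vertical segments needed to reach the counts $2n+1$ and form a full $(2n+1)\times(2n+1)$ grid, are crossings that do not occur in $D$ and must be rendered inert.

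The third step sets the crossings. At the $k$ staircase positions I copy the over/under data of $D$; at every other grid position I invoke the trivialization already used to justify the subcurve relation, choosing the inert part of the curve to be decreasing and to pass under the part carrying $D$. This makes the added material an unknotted strand that is unlinked from the rest, so it isotopes away without changing the knot type, exactly as in the argument that $C \subseteq C'$ implies $C \leq C'$. The region-by-region uniform crossing choices off the staircase are precisely what implements this.

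The final and most delicate step is to reconcile the connection patterns, and this is where I expect the main obstacle to lie. The potholder $P_n$ joins its horizontal segments into one arc and its vertical segments into another by a rigid, prescribed snake along the boundary, whereas the comb-form meander $D$ uses arbitrary nested arcs to close up its two arcs. I would absorb this discrepancy into the boundary of the grid, rerouting the potholder's corner turns through wrap-around arcs on the four sides so that the global arc structure becomes exactly that of $D$ while the diagram remains the single closed curve $P_n$. Checking that these reroutings reproduce precisely the potholder curve, keep the diagram a single component, and leave the trivialization of the inert crossings undisturbed is the technical heart of the proof; once it is verified, the completed potholder carries $K$, which establishes the theorem.
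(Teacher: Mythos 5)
Your overall strategy coincides with the paper's: normalize a meander diagram of $K$, embed its $n$ genuine crossings along a staircase $c_i=(i,2\sigma(i)-1)$ inside a rectangular grid of horizontal and vertical strands, copy the crossing data of the meander at those positions, and choose the remaining grid crossings so that the surplus material can be isotoped away. The paper additionally insists on a \emph{standard} meander (Proposition~\ref{StandardMeander}), so that the two change points can sit at opposite corners adjacent to the outer face; your ``convenient normal form'' would need to include this.

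However, your third step contains a genuine gap. You propose to render the off-staircase crossings inert by taking ``the inert part of the curve to be decreasing and to pass under the part carrying $D$,'' invoking the trivialization used to show $C\subseteq C'\Rightarrow C\leq C'$. That argument applies when the extra material is a separate arc, obtained by cutting, whose only interactions with the carried diagram are crossings that can all be made undercrossings. Here there is no such separate arc: every off-staircase crossing is a crossing between the horizontal snake $H$ and the vertical snake $V$, and both of these arcs also carry the genuine crossings $c_1,\dots,c_n$. The meander is not a subcurve of the potholder in the paper's sense, so the trivialization lemma simply does not apply. Moreover, the uniform ``all under'' choice is not merely unjustified but appears to be wrong: in the paper's construction the crossing at $(i,j)$ is chosen to be $H$ under $V$ for $j<2\sigma(i)-1$ and $H$ over $V$ for $j>2\sigma(i)-1$, precisely so that each subarc $V_i$ of $V$ between consecutive staircase crossings lies \emph{entirely} above or \emph{entirely} below $H$, alternating with $i$ (see Figure~\ref{figMeanderToPotholder}). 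This is what allows each $V_i$ to be placed at a single constant height and swept off to the right (above everything) for $i$ even, or to the left (below everything) for $i$ odd, matching the corresponding subarc $W_i$ of an intermediate meander diagram isotopic to $D$. With your choice, the subarcs that must travel over the top of the diagram would be pinned underneath $H$, and no constant-height isotopy is available. Your final step (reconciling the closing arcs of the meander with the potholder's boundary snake), which you correctly flag as the technical heart, is exactly what the paper's intermediate diagram $M=H\cup W$ and its explicit height assignment accomplish; as written, your proposal leaves both the crossing rule and this verification unsupplied.
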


\medskip
In Section~\ref{sec:link} we extend these results to links. A \emph{meander diagram for a $k$-component link} $L$ is a link diagram representing $L$ in which each component is decomposed into at most two simple arcs, which we can color black and red. Moreover all crossings occur between two different colors. See Figure~\ref{linkmeander} for meander diagrams of 3-component links.

\begin{figure}[b]
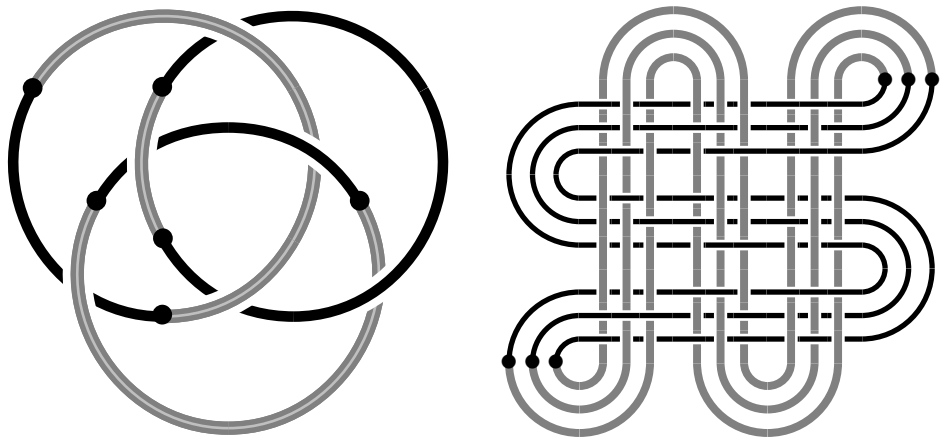

\centering
~ \\
~ \\
\borromean{2.5} \;\; \potholderlinkfigure{1.25}
\\ ~
\caption{Meander and potholder diagrams of 3-component links}
\label{linkmeander}
\end{figure}

Note that \cite{kappraff2016meanders} studied a different notion, in which a meander link consists of a pair of simple closed curves that intersect pairwise. That definition applies only to two-component links where each component is unknotted.  Ours is more general, as we show in the following theorem.

\begin{theorem}
\label{alllinksmeanders}
All links have a meander diagram.
\end{theorem}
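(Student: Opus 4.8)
The plan is to generalize the construction behind Theorem~\ref{allmeanders} from a single closed curve to a disjoint union of closed curves. Recall that that construction takes an arbitrary knot diagram and reroutes it into a curve that splits into one ``outgoing'' simple arc and one ``returning'' simple arc, which we color black and red, so that every crossing occurs between the two colors. I would start from an arbitrary diagram $D$ of the link $L = K_1 \cup \dots \cup K_k$ and run the same straightening on all components at once against a single common axis, so that each component $K_i$ contributes one black arc and one red arc. This immediately supplies the required decomposition of each component into at most two simple arcs; what then remains is to guarantee that every crossing — now possibly between two different components — is bichromatic.

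Concretely, I would first bring $L$ into a grid-like (arc-presentation) form in which the whole diagram consists of horizontal and vertical segments, every crossing is the transverse intersection of a horizontal segment with a vertical one, and verticals always pass over. Coloring all horizontal segments red and all vertical segments black then makes \emph{every} crossing bichromatic by construction, including the inter-component ones; the examples in Figures~\ref{figureeight} and~\ref{linkmeander} are exactly of this shape. At this stage, however, a single component is typically cut into many red and many black segments, so the simple-arc count per component is far larger than two, and the real work lies in reducing it.

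The heart of the argument is therefore the same merging step already used for the potholder in the knot case: within each component I would splice its horizontal segments into one simple red arc and its vertical segments into one simple black arc, routing the splices through the outer face (as in the boundary connections of the potholder) so that they create no new crossings and, in particular, no monochromatic ones. Performing this merge one component at a time reduces every $K_i$ to at most two simple arcs while preserving the global red/black coloring, and hence the property that all crossings occur between different colors. This yields a meander diagram of $L$ in the sense defined above.

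I expect the main obstacle to be precisely the bookkeeping of this per-component merging in the presence of the other components: each splicing arc must be routed so that it neither meets a segment of its own color nor disturbs the already-fixed arcs of the other components, and one must verify that the two color-change points introduced on each $K_i$ never force an illegal same-color crossing. Isolating each component into its own horizontal and vertical ``bands'' and carrying out the merges from the outside inward should keep this routing unobstructed, so that the whole construction goes through uniformly in the number of components $k$.
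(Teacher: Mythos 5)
Your opening paragraph is in fact the paper's entire proof: one seeds each component with a small red segment and then reruns the argument of Lemma~\ref{Xmoves} verbatim, eliminating black--black crossings one at a time by $\mathfrak{X}$-moves. Each $\mathfrak{X}$-move lifts to an isotopy of the link exactly as in Theorem~\ref{allmeanders}, every new crossing created by an $\mathfrak{X}$-move involves a red strand (so no black--black or red--red crossings are introduced), and the number of black--black crossings strictly decreases, so the process terminates in a meander link diagram. Had you stopped there, the proposal would be essentially the paper's argument.

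Instead you pivot to a grid-diagram construction, and that is where a genuine gap opens. First, the claim that the splicing arcs ``create no new crossings'' is false for a general grid diagram: unlike the potholder, whose horizontal and vertical segments span the full width and height so that their endpoints sit on the outer face, the segments of a grid diagram end at interior corners, and any arc from such a corner to the outer face must cross other strands. Second, and more seriously, re-chaining a component from the alternating order $H_1V_1H_2V_2\cdots H_mV_m$ into ``all horizontals, then all verticals'' is not an isotopy of connecting arcs: it is a wholesale reordering of the traversal, and nothing in the proposal shows that the resulting spatial curve is the same knot -- cutting all $2m$ corners and reconnecting the segments in a new cyclic order can easily produce the unknot. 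This reordering is precisely the hard content of the theorem; in the paper it is achieved either by $\mathfrak{X}$-moves that are individually shown to lift to isotopies (Lemma~\ref{Xmoves}), or, in the proof of Theorem~\ref{allpotholders}, by an explicit height-function isotopy that starts from a diagram \emph{already} decomposed into two simple arcs. The ``merging step already used for the potholder'' that you invoke therefore does not exist in the form you need: it merges two arcs into a potholder, not $2m$ grid segments into two arcs. Without a lemma certifying that your splicing preserves the link type and creates only bichromatic crossings, the construction does not go through.
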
  

We also extend the notion of a potholder diagram to $k$-component links. The diagram on the right hand side of Figure~\ref{linkmeander} is a 3-by-3 potholder diagram of some random 3-component link. See Section~\ref{sec:link} for a precise definition. We prove the following theorem. 

\begin{theorem}
\label{alllinkspotholders}
All links have a potholder diagram.
\end{theorem}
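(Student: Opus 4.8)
The plan is to run the same meander-to-potholder construction that proves Theorem~\ref{allpotholders}, but starting from a link meander diagram and checking that it respects the component structure. By Theorem~\ref{alllinksmeanders}, the given $k$-component link $L$ has a meander diagram $D$ in which every component is cut into at most two simple arcs, colored black and red, and every crossing joins a black strand to a red one. The target, defined in Section~\ref{sec:link}, is a potholder grid diagram whose black arcs carry the horizontal segments and whose red arcs carry the vertical ones, now split into $k$ closed curves realizing the $k$ components of $L$.

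First I would recall the reorganization behind Theorem~\ref{allpotholders}. One isotopes the black and red arcs so that, across a central square region, the black arc appears as a bundle of parallel horizontal strands and the red arc as a bundle of parallel vertical strands, meeting in a full grid of crossings; the remaining ``returns'' outside the grid reconnect the strand-ends in the cyclic pattern dictated by $D$. Because a meander has crossings only between colors, this central region is automatically a grid of crossings between horizontal and vertical strands, that is, a potholder block, and the reconnecting arcs can be pushed to its boundary. The key point is that the procedure is \emph{local}: it only reroutes strand-ends and never introduces or removes a crossing between two strands of the same color, so it is insensitive to how many closed components the diagram finally has.

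Next I would apply this construction to $D$ directly. Since the only inputs it uses are the horizontal strands, the vertical strands, and the boundary reconnection pattern, feeding it a $k$-component meander produces a grid diagram whose reconnections split the strands into exactly $k$ closed curves, one per component of $L$. I would then pad the grid out to a full $(2n+1)\times(2n+1)$ potholder block by inserting extra rows and columns of crossings; by the subcurve argument of the introduction, each added crossing can be resolved so as to pass monotonically beneath everything, leaving both the link type and the component count unchanged. The outcome is a potholder diagram of $L$ in the sense of Section~\ref{sec:link}.

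The step I expect to be the main obstacle is confirming that the boundary reconnections recover precisely the $k$ prescribed components, neither fusing two of them nor spawning a stray unknotted circle. In the knot case there is a single cyclic order to respect and this is automatic; for a link I would instead track how the reconnections pair up the strand-ends and verify that every elementary isotopy of the reorganization, as well as each padding step, preserves the number of resulting closed curves. This bookkeeping is the one genuinely new ingredient beyond the argument for Theorem~\ref{allpotholders}; once it is settled, the remainder of the proof is identical.
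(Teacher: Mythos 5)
There is a genuine gap, and it stems from glossing over what a \emph{potholder link diagram} actually is. The paper's definition (Section~\ref{sec:link}) requires the $k$ components to be $k$ \emph{parallel copies} of a single potholder curve --- each component contributes both horizontal and vertical strands, running in a thin band alongside the others --- not, as you describe, a grid whose boundary reconnections happen to close up into $k$ circles. Consequently the real work is not the component-counting bookkeeping you flag at the end, but rather arranging the input meander diagram so that the $k$ black arcs are mutually parallel and the $k$ red arcs are mutually parallel before the knot construction is applied. This is the content of Theorem~\ref{AllLinksStandardMeander} (all links have \emph{standard} meander diagrams), which occupies most of Section~\ref{sec:link}: one first repositions all $2k$ change points along a single transverse arc, then iteratively isotopes $B_1, B_2, \dots$ (and likewise the red arcs) into parallel position by finger moves lying above the rest of the link. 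Your proposal jumps straight from Theorem~\ref{alllinksmeanders}, whose output has no parallelism whatsoever, to ``feed it into the knot construction,'' and there is no reason the result would be $k$ parallel copies of anything.

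The second missing ingredient is the framing problem. Once the $k$ components are parallel, the proof of Theorem~\ref{allpotholders} is applied to the band of $k$ strands treated as a single thickened curve, but the isotopy in that proof uses Reidemeister~I moves at maxima and minima of the vertical arcs, and on a $k$-strand band each such move introduces a full twist. These twists must be transported along the potholder curve (Figure~\ref{twists}) and shown to cancel in pairs; the paper does this by identifying the twists with descents of the permutation $\sigma$ and invoking the fact that a standard meander has equally many descents on each side of $H$. Your claim that the construction ``is insensitive to how many closed components the diagram finally has'' is exactly where this fails: for $k=1$ a Reidemeister~I move is harmless, but for $k>1$ it changes the diagram of the parallel band unless the twists are cancelled. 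Neither the parallelization step nor the twist-cancellation argument appears in your proposal, and both are essential.
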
  

\medskip
In Section~\ref{sec:properties}
We discuss several consequences and applications. In particular, we show application to braids. A braid is called \emph{1-pure} if all strands except for one are parallel straight lines~\cite{artin1947theory}, as in Figure~\ref{straightbraid}.

\nopagebreak
\begin{corollary}
\label{StrongAlexander}
All knots are closures of $1$-pure braids.
\end{corollary}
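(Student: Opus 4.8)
The plan is to deduce the corollary from Theorem~\ref{allmeanders}, by exhibiting any meander diagram as the closure of a $1$-pure braid. A meander diagram consists of two simple arcs meeting at two change points; after a planar isotopy I take one of them, the \emph{axis}, to be a straight horizontal segment, so that the second arc, the \emph{weaver}, meets the axis transversally in all of the crossings, alternating above and below the axis between consecutive crossings. The goal is to redraw this picture so that the axis (suitably split) becomes the family of parallel straight strands of a braid, while the weaver becomes the single remaining strand, with the original diagram recovered as the braid closure.

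Concretely, I would proceed in three stages. First, record the combinatorics of the meander by sliding all crossings onto the axis, obtaining an arc presentation in which the weaver contributes a collection of arcs lying alternately above and below the axis. Second, \emph{parallelize}: comb each excursion of the weaver so that its two ends run parallel to the axis, trading every place where the weaver reverses direction for one or more additional strands that run straight and parallel. This is the step that converts the non-monotone weaver into a strand that is monotone in the direction of the axis. Third, \emph{straighten} the result into braid position, taking the braid direction along the axis: the axis together with the combed parallel portions become $n-1$ straight parallel strands, the weaver becomes the one non-straight strand, and closing the braid reproduces the meander. These three stages are exactly the passage from the arc form through the parallelized form to the straightened form.

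It then remains to check that the diagram produced is a genuine braid, i.e.\ every strand is monotone in the braid direction; that all but one of its strands are straight and mutually parallel, so that the braid is $1$-pure; and that its closure is the original knot. Connectivity is automatic, since the construction only isotopes the meander; the real content is that the straight strands can be taken to be parallel slanted lines realizing a cyclic shift, so that together with the nested closure arcs they form the single axis curve and the whole picture is honestly the closure of a $1$-pure braid. The main obstacle is the parallelize step: one must absorb every reversal of the weaver into additional straight strands without creating crossings among the straight strands and without changing the knot type, all while keeping the weaver monotone in the braid direction. Verifying that this combing can always be carried out, uniformly over all meanders, and that it leaves the isotopy class unchanged, is the technical heart of the argument. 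The same scheme applies verbatim to the potholder diagrams of Theorem~\ref{allpotholders}, whose vertical segments already supply the straight strands.
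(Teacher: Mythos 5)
There is a genuine gap: the step you yourself identify as ``the technical heart of the argument'' --- the \emph{parallelize} step that converts the non-monotone weaver of an arbitrary meander into a strand monotone in the braid direction, at the cost of extra straight parallel strands, without changing the knot type --- is exactly where all the difficulty of the corollary lives, and it is not carried out. A closed $1$-pure braid diagram is indeed a meander-like diagram of a very special shape (one arc is a monotone weaver, the other a nested spiral of parallel segments joined through the outer face), but passing from a \emph{general} meander to this shape is essentially equivalent to Theorem~\ref{allpotholders} itself: each ``combing'' of a reversal of the weaver inserts a new segment that must cross large portions of the rest of the diagram, and one must choose all the new crossings so that the segment can be isotoped entirely above or below everything it meets. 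Deferring this to a final ``verification'' leaves the corollary unproved; starting from Theorem~\ref{allmeanders} rather than Theorem~\ref{allpotholders} does not make it easier. There is also a structural worry in your sketch: you let the family of straight strands be assembled from pieces of \emph{both} the axis and the weaver, but in a $1$-pure braid closed into a knot the straight strands together with their closure arcs must form a single simple arc and the winding strand the complementary one, so the decomposition you describe does not obviously yield a one-component closure.

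The paper's proof avoids all of this by taking the potholder theorem as the input. A potholder diagram already has the horizontal arc monotone in the vertical direction (the would-be braid direction) and the vertical segments straight; what remains is that the vertical segments of a potholder are joined to their neighbours alternately at the top and bottom (a zigzag) rather than by closure arcs through the outer face. To reroute them, the paper observes that in the diagram produced by Theorem~\ref{allpotholders} each vertical segment has a single change from over-crossings to under-crossings, inserts a pair of extra vertical segments per column (the $I\to N$ move of Figure~\ref{fig:ItoNmove}) to make every other vertical segment \emph{consistent} (all-over or all-under), and then slides the consistent segments through the outer face to the right, yielding a closed potholder braid as in Figure~\ref{toBraid}. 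Your closing remark that the scheme ``applies verbatim'' to potholders misses precisely this consistency/rerouting issue, which is the actual content of the paper's argument once Theorem~\ref{allpotholders} is in hand.
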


\begin{figure}[t]
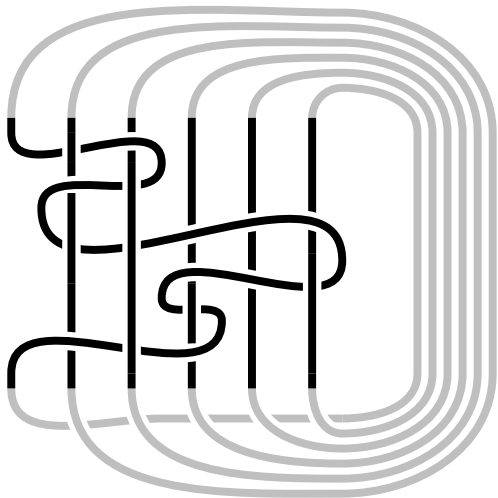

\centering
\straight{0.8}
\caption{A 1-pure braid with 6 strands [black], and the knot obtained by standard closure [gray].}
\label{straightbraid}
\end{figure}

Theorems~\ref{allpotholders} and \ref{alllinkspotholders} also answer a question by Adams, Shinjo and Tanaka \cite{adams2011complementary}, about the universality of diagrams with restricted edge numbers for the faces.

\begin{corollary}
\label{faces}
All links have a diagram with exactly two odd-sided faces.
\end{corollary}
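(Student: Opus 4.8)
The plan is to deduce this from Theorem~\ref{alllinkspotholders}: since every link has a potholder diagram, it suffices to show that a potholder diagram always has exactly two odd-sided faces. As a sanity check on the target number, recall that the number of odd-sided faces of any link diagram is necessarily even: a diagram with $C$ crossings is a $4$-valent plane graph with $2C$ edges, so the face degrees sum to $4C$, an even number, which forces an even number of odd-sided faces. Thus two is the smallest value one could hope to realize with odd faces present, and the goal is to realize it on the nose.

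First I would establish the knot case by classifying the faces of the potholder $P_n$ explicitly. The $(2n+1)\times(2n+1)$ grid of crossings produces three easily understood families of faces: the $4n^2$ interior unit squares, each bounded by four edges; the $4n$ bigons formed along the boundary, where each turnaround edge runs parallel to the outermost grid edge joining the same pair of crossings; and the two monogons at the change points in the lower-left and upper-right corners, where the free horizontal and vertical stubs at the corner crossing close up through $s$ (respectively $t$) into a single self-loop. The interior squares and turnaround bigons are even-sided, while each corner monogon is one-sided. Euler's formula gives $F=C+2=4n^2+4n+3$ faces in total, and since the squares, bigons, and monogons already account for $4n^2+4n+2$ of them, exactly one face remains: the outer face. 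Its degree is then forced by $\sum(\text{sides})=4C$ to equal $4(2n+1)^2-(16n^2+8n+2)=8n+2$, which is even. Hence the two corner monogons are the only odd-sided faces, proving the knot case.

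The main obstacle is the passage to links, where one must check that the multi-component potholder of Section~\ref{sec:link} still contributes exactly two odd faces, rather than one monogon per component. The key point is that a link potholder has the same grid interior as $P_n$—so its interior faces are again squares—and differs only in how the boundary turnarounds and the two corner tangles are reconnected in order to split the single curve into $k$ components. I would argue that every such boundary reconnection keeps the affected faces even (the turnaround regions remain bigons, or merge into still-even-sided boundary faces), and that the strand-ends at each of the two change corners close up so as to contribute a single monogon, exactly as in the knot case. Verifying this last claim—that the $k$ strand-ends meeting at each corner produce exactly one odd face—is the crux; once it is in hand, the same Euler-count bookkeeping forces the outer face to be even and leaves precisely the two corner monogons as the odd-sided faces. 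Corollary~\ref{faces} then follows immediately, and I would remark that the same analysis applied to general meander diagrams (Theorem~\ref{alllinksmeanders}) yields an alternative route.
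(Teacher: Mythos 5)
Your proposal is correct and follows essentially the same route as the paper: invoke Theorem~\ref{alllinkspotholders} and observe that a potholder (link) diagram has exactly two odd-sided faces, namely the two 1-gons at the change-point corners, all other faces (interior squares, turnaround bigons, outer face) being even. The link-case detail you flag as the crux --- that the $k$ nested strand-ends at each corner produce one innermost monogon while the $k-1$ faces between consecutive corner arcs are 4-gons --- is exactly the assertion the paper also makes without further justification, and it does hold, so there is no substantive gap.
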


\smallskip 
\noindent {\bf Remark.}
After submitting this article, we learned that Corollary~\ref{StrongAlexander} appears in a paper by G. Makanin from 1989~\cite{makanin1989analogue}. He uses the term unary braids, and shows their universality by different methods.  We thank Sergei Chmutov for bringing this to our attention.

\section{Meanders} \label{meander} 

We show that every knot can be represented by a meander diagram. We start by describing a procedure to turn a curve into a meander.

An \emph{$\mathfrak{X}$-move} at two successive crossings $c, c'$ is a regular homotopy of the curve supported in a neighborhood of the arc connecting $c, c'$ that creates a pair of new crossings as in Figure~\ref{xmove}. Note that this move always increases the number of crossings, and that we do not consider its inverse.

\begin{figure}[htbp]
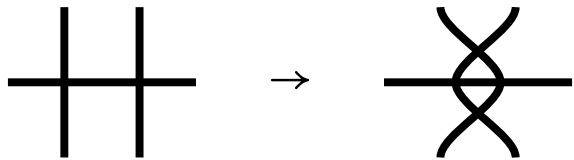

\centering
\xmove{1}
\caption{The $\mathfrak{X}$-move}
\label{xmove}
\end{figure}

\begin{lemma} \label{Xmoves}
Any closed curve can be transformed into a meander by a sequence of $\mathfrak{X}$-moves.
\end{lemma}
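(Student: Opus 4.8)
The plan is to use $\mathfrak{X}$-moves to bring the curve to a normal form in which it is manifestly a meander, and then simply read off the two simple arcs. The cleanest target is the \emph{comb} form: one monotone strand (the ``spine'') that meets every crossing exactly once, with the rest of the curve leaving the spine transversally and reconnecting its ends by pairwise non-crossing, nested arcs on each side. In such a picture the spine is one simple arc and the union of the nested side-arcs is the second simple arc, with every crossing lying between the two --- exactly the structure of the potholder diagram in Figure~\ref{figureeight}. So the whole task is to show that $\mathfrak{X}$-moves can always reach this form.

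The engine is the following reading of Figure~\ref{xmove}: an $\mathfrak{X}$-move takes two strands that cross a common strand at two consecutive crossings and routes them past one another, inserting a clasp --- the promised pair of new crossings --- between them. In effect it applies an adjacent transposition to the order of the strands emanating from a fixed strand. First I would choose a basepoint, an orientation, and the sub-path that is to become the spine, and use planar isotopy to stretch it into a long monotone arc. Then I would repeatedly apply $\mathfrak{X}$-moves to sweep every crossing onto the spine and to sort the connection pattern of the arcs hanging off it, on each side, into a nested non-crossing pattern. Once both side-patterns are nested, each of the two arcs is embedded and the diagram is a meander.

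The step I expect to be the main obstacle is proving that this procedure terminates. Since every $\mathfrak{X}$-move strictly \emph{increases} the number of crossings, termination cannot be read off the crossing count --- a naive sort creates fresh crossings at every step and need not visibly stop. The real work is to equip the process with a genuine monovariant: a well-founded complexity, such as an inversion count measuring how far the current order of the side-connections is from a fixed nested target, that strictly decreases under each move despite the growth in crossings, together with the check that a configuration of complexity zero is precisely a comb-form meander. Given such a measure, the geometric realizability of the nested side-arcs follows automatically from planarity, completing the argument.
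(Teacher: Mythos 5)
Your plan correctly identifies the crux --- that $\mathfrak{X}$-moves only ever increase the crossing number, so termination must come from some other monovariant --- but it then stops exactly there: you say the ``real work'' is to find a well-founded complexity that decreases under each move, and you do not supply one. That is the entire content of the lemma, so as written this is a gap, not a proof. Worse, the candidate you gesture at (an inversion count on the side-connections relative to a fixed nested target) is not obviously monotone: each $\mathfrak{X}$-move inserts a clasp, i.e.\ two \emph{new} crossings between the two transposed strands, and these new crossings change the very connection pattern whose inversions you are counting. You would also need to check at every step that the two crossings you want to transpose are \emph{successive} along the curve (the move is only defined for consecutive crossings joined by an arc free of other crossings), which a generic sorting step does not guarantee. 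Finally, your target normal form is stronger than necessary: a meander only requires a decomposition into two simple arcs, not that the side-arcs be nested and non-crossing --- that refinement is the ``standard meander'' property, which the paper obtains later (Proposition~\ref{StandardMeander}) by an isotopy argument, not by $\mathfrak{X}$-moves. Aiming for the comb form makes the termination problem harder than it needs to be.

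The paper's proof resolves the termination issue with a two-coloring. Color a basepoint red and the rest black, and grow the red arc along the curve: a black--black crossing ahead can be absorbed directly (it becomes red--black), while if the first black--black crossing is $c_k$, preceded by red--black crossings $c_1,\dots,c_{k-1}$ on an embedded black segment, a run of $k-1$ $\mathfrak{X}$-moves transposes $c_k$ to the front so it can be absorbed. The point is that every new crossing created by these moves is red--black and no red--red crossing ever appears, so the number of black--black crossings strictly decreases by one per round; when it reaches zero the red and black arcs are both simple and the curve is a meander. That count of black--black crossings is precisely the monovariant your proposal is missing.
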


\nopagebreak
\begin{proof}
Let $C$ be an arbitrary curve with $n$ crossings. We start by choosing a base point and direction, and color the base point red and the rest of the curve black. As long as the next crossing in the black arc has type black-black, we extend the red arc, crossing the transverse black arc, so that the crossing point turns red-black. Otherwise, we will modify the curve by $\mathfrak{X}$-moves.  

Suppose that the type of the next crossing in the black arc is black-red. As we traverse the current black arc we encounter crossings $c_1, c_2, \dots, c_m$. If there are no black-black crossings we are done, because the curve is already a meander with the current black and red arcs.

Otherwise, let $c_k$ be the first black-black crossing. Since $c_1,\dots,c_{k-1}$ are all red-black, the black segment from $c_1$ to $c_k$ does not self-intersect and hence is embedded. See the left hand side of Figure~\ref{xmoves} for an example where $k=4$. In a neighborhood of this segment we perform a ($k-1$)-long sequence of $\mathfrak{X}$-moves at $(c_{k-1},c_k), \dots, (c_2,c_3), (c_1, c_2)$. We end up with a new configuration, as illustrated in the right hand side of Figure~\ref{xmoves}.

\begin{figure}[bh]
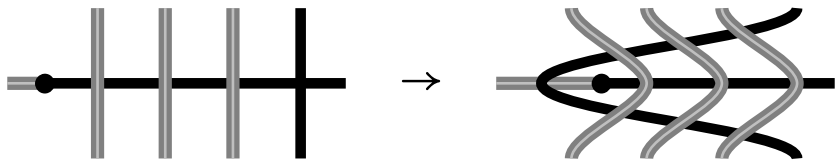

\centering
\xmoves{1}
\caption{Three $\mathfrak{X}$-moves}
\label{xmoves}
\end{figure}

Note that the new curve has $2(k-1)$ more crossing points. However, it has one fewer black-black crossing, and no red-red crossing point has been created. Therefore, by induction on the number of black-black crossings, the process terminates after at most $n$ such modifications with a curve that has only red-black crossings, i.e., a meander.
\end{proof}

Now we use this lemma about meander curves to deduce that meander knot diagrams can be used to represent all knots.

\begin{theorem*}[\bf\ref{allmeanders}]
All knots have meander diagrams.
\end{theorem*}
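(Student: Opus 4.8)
The plan is to promote Lemma~\ref{Xmoves} from a statement about curves to a statement about diagrams, by tracking over/under information through the sequence of $\mathfrak{X}$-moves. Given an arbitrary knot $K$, I would begin with any knot diagram $D$ of $K$ and let $C$ be its underlying curve. By Lemma~\ref{Xmoves}, there is a finite sequence of $\mathfrak{X}$-moves transforming $C$ into a meander curve $C'$. The only thing left to do is to equip the intermediate curves with crossing data so that the knot type is preserved at every step, ending with a genuine meander diagram of $K$.

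The key observation is that each $\mathfrak{X}$-move can be realized as a Reidemeister~II move on the diagram. An $\mathfrak{X}$-move is supported in a neighborhood of an embedded arc and creates exactly two new crossings between two strands that did not previously cross in that region, while leaving every pre-existing crossing untouched; the old crossings therefore retain their over/under assignments inherited from $D$. At the two new crossings I would assign the data so that one of the two strands passes entirely over the other at \emph{both} crossings. With this choice the local modification is precisely a Reidemeister~II move, namely a finger of one strand pushed across the other and ready to be withdrawn, and hence it does not change the knot type.

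Carrying out this bookkeeping at every step produces a sequence of diagrams $D = D_0, D_1, \dots, D_N$ in which each $D_{i+1}$ arises from $D_i$ by a Reidemeister~II move, and $D_N$ is supported on the meander curve $C'$. Since Reidemeister~II moves preserve the knot type, $D_N$ represents $K$; and since $C'$ has simple arc index two, $D_N$ is by definition a meander diagram. Thus $K$ has a meander diagram, as claimed.

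The step I expect to require the most care is the verification that the $\mathfrak{X}$-move, together with the Reidemeister~II choice of crossings, genuinely leaves the knot type unchanged. Concretely, one must confirm from the local picture in Figure~\ref{xmove} that the two freshly created crossings lie between a single pair of strands and can be oriented so that one strand lies over the other at both, so that the move is undoable by withdrawing the finger. Once this local claim is established, the rest of the argument is just the inductive transport of crossing data along the sequence supplied by Lemma~\ref{Xmoves}.
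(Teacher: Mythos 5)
Your argument is correct and is essentially the proof in the paper: reduce to Lemma~\ref{Xmoves} and lift each $\mathfrak{X}$-move to a knot-type-preserving isotopy by a suitable choice of over/under at the two new crossings, keeping all old crossings as they were. The one quibble is that the move is not literally a Reidemeister~II move, since the bigon bounded by the two new crossings is not empty---it contains the arc joining $c$ and $c'$ and those two old crossings---so withdrawing the finger is really a detour move (a Reidemeister~II together with Reidemeister~III's, which is what the paper's case check and Figure~\ref{Xmovelifted} encode); but your choice of making one strand pass entirely over the other at both new crossings is exactly what makes this an isotopy, so the argument goes through.
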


\begin{proof}
Let $K$ be a knot with an arbitrary diagram. By Lemma~\ref{Xmoves} there is a sequence of $\mathfrak{X}$-moves that transform the curve in the diagram of $K$ into a meander. We show that each $\mathfrak{X}$-move on a projection of $K$ can be lifted to an isotopy in $\R^3$ that preserves the knot type of $K$. 

\begin{figure}[hbt]
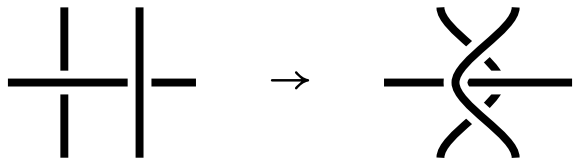

\centering
\xmovelifted{1}
\caption{A lifted $\mathfrak{X}$-move}
\label{Xmovelifted}
\end{figure}

Two successive crossings $c, c'$ give rise to four possible combinations of over-crossings or under-crossings. In each case an $\mathfrak{X}$-move lifts to an isotopy. One of the cases is shown in Figure~\ref{Xmovelifted}.
\end{proof}

We briefly discuss the implications of this argument for the efficiency of meanders. Given an $n$-crossing knot diagram, how many crossings are needed for an equivalent meander diagram? The above reduction yields an upper bound as follows.

The number of crossings in the meander is determined by the process described in Lemma~\ref{Xmoves}. Starting with $n$ black-black crossings, we turn the curve into a meander in $n$ steps. In every step, first the number of red-black crossings is at most tripled, and then one black-black crossing turns red-black. At the end all crossings are red-black. Solving the recurrence, the resulting meander has $(3^n-1)/2$ crossing points in the worst case. The following corollary summarizes this computation.

\nopagebreak
\begin{corollary}
\label{meander-log}
Meanders are at least $(\log_3 x)$-efficient.
\hfill $\square$
\end{corollary}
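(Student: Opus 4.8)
The plan is to turn the informal crossing count preceding the statement into a clean recurrence and then invert it. Recall the construction underlying Lemma~\ref{Xmoves} and Theorem~\ref{allmeanders}: beginning from an $n$-crossing diagram of a knot $K$, one eliminates the black-black crossings one at a time, ending with a meander diagram of $K$. Each step either extends the red arc across a single black-black crossing, or, when the next crossing is black-red, performs a short sequence of $\mathfrak{X}$-moves and then extends. Crucially, every crossing created is red-black and no red-red crossing is ever produced, so the only quantity I need to follow is the number of red-black crossings, and its value when the process halts is exactly the crossing number of the resulting meander.

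First I would set up the recurrence. Let $a_j$ be the number of red-black crossings present after $j$ of the original black-black crossings have been eliminated, so that $a_0 = 0$; since each step removes exactly one black-black crossing, the process halts after precisely $n$ steps, and $a_n$ is the number of crossings of the final meander. Consider the step that eliminates the $(j{+}1)$-st black-black crossing. If it is a plain extension then $a_{j+1} = a_j + 1$. Otherwise we traverse the current black arc to its first black-black crossing $c_k$; the preceding crossings $c_1,\dots,c_{k-1}$ are all red-black, hence $k-1 \le a_j$, and the step creates $2(k-1)$ new red-black crossings through $\mathfrak{X}$-moves and then turns $c_k$ red-black. In either case
\begin{equation*}
a_{j+1} \;\le\; a_j + 2(k-1) + 1 \;\le\; a_j + 2a_j + 1 \;=\; 3a_j + 1 .
\end{equation*}

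Next I would solve and invert. From $a_0 = 0$ and $a_{j+1} \le 3a_j + 1$ a routine induction gives $a_j \le (3^{j}-1)/2$, so the meander obtained from an $n$-crossing diagram has at most $(3^{n}-1)/2 < 3^{n}/2$ crossings. To read off the efficiency, suppose a knot $K$ has at most $\log_3 x$ crossings; then its crossing number $n$ satisfies $3^{n} \le x$, so $K$ has a meander diagram with at most $(3^{n}-1)/2 < x$ crossings. Thus meanders with up to $x$ crossings carry all knots with up to $\log_3 x$ crossings, which is precisely the assertion that they are at least $(\log_3 x)$-efficient.

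The one point demanding care --- the main, if mild, obstacle --- is the inequality $k-1 \le a_j$ at each modification step, i.e.\ that the number of $\mathfrak{X}$-moves never exceeds the current stock of red-black crossings. This is also where the worst case lives: the factor $3$ arises exactly when every existing red-black crossing lies on the black arc ahead of its first black-black crossing, so that each of them spawns two further red-black crossings in a single step. I would also double-check, as in Lemma~\ref{Xmoves}, that no red-red crossing is ever introduced, which is what guarantees that $a_n$ accounts for all crossings of the terminal meander.
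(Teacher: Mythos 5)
Your proposal is correct and follows essentially the same route as the paper: the text preceding the corollary states exactly the recurrence ``at most tripled, then one black-black crossing turns red-black,'' i.e.\ $a_{j+1}\le 3a_j+1$ with $a_0=0$, solves it to $(3^n-1)/2$, and the corollary records the resulting $\log_3 x$ bound. Your write-up merely makes explicit the justification $k-1\le a_j$ and the final inversion, both of which are consistent with the paper's argument.
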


It is not clear if all knots can be represented by much smaller meanders than those provided by this procedure. A super-logarithmic efficiency for meanders would be interesting. 

However, it can be improved by a multiplicative constant to at least $\log_2 x$ as shown by Nicholas Owad in a recent preprint~\cite{owad2018straight}. He does so using the fact that the shortest distance of a self-crossing of the black arc to its ends is at most half its length. Thus the number of red-black crossings at most doubles, rather than triples. 

It is still easy to draw a knot diagram that leads to an exponential number of crossings by these algorithms. This is the case for a certain $3n$-crossing diagram of the connected sum of $n$ trefoils, while another $3n$-crossing diagram of the same knot is already a meander.

\medskip
We finish our discussion of meander knots by a slight strengthening of this notion.
We call the two points in a meander curve where it changes from one color to the other {\em change} points, and show that we can find a diagram for any knot in which these two points lie on the boundary of a single face. We call such a curve a {\em standard meander curve} and the corresponding diagram a {\em standard meander diagram}. 

After a planar isotopy, standard meander diagrams can be drawn as the union of an arc formed by the unit interval $I$ on the $y$-axis in the $xy$-plane and a simple arc connecting $(0,0)$ to $(0,1)$ that intersects the $y$-axis at a finite set of points on~$I$. See the left diagram on Figure~\ref{figMeanderPot}.

We strengthen Theorem~\ref{allmeanders} so that any knot is realized by this special type of meander diagram. See~\cite[Theorem 2.1]{adams2011complementary} for an earlier proof of this stronger version.

\begin{proposition}
\label{StandardMeander}
All knots have standard meander diagrams. 
\end{proposition}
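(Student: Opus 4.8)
The plan is to produce the diagram directly from a meander diagram of $K$, whose existence is guaranteed by Theorem~\ref{allmeanders}. Write the underlying meander curve as $C = B \cup R$, where $B$ (black) and $R$ (red) are the two simple arcs, meeting precisely at the change points $s$ and $t$, so that every crossing of $C$ is a transverse intersection point of $B$ with $R$. Because $B$ is an embedded arc in the plane, there is an ambient planar isotopy taking $B$ onto the segment $I = \{0\}\times[0,1]$ with $s\mapsto(0,0)$ and $t\mapsto(0,1)$; it carries $R$ to a simple arc joining $(0,1)$ to $(0,0)$ and meeting $I$ transversally in a finite interior set, and, being a planar isotopy of the diagram, it leaves the crossing data and hence the knot type of $K$ untouched. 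The only extra requirement hidden in the word \emph{standard} is that $s$ and $t$ lie on the boundary of a common face, and since a planar isotopy permutes faces, it suffices to verify this for the meander \emph{shadow}. Thus everything reduces to the following purely planar statement.

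The crux is the claim that whenever two simple arcs $B$ and $R$ share exactly their endpoints $s,t$ and cross transversally in their interiors, the points $s$ and $t$ lie on a common face of $B\cup R$. I would prove this by induction on the number $m$ of crossings, working entirely at the level of the shadow, where no over/under information is involved and crossings may be freely removed. If $m=0$ then $C$ is a simple closed curve, which has exactly two faces, each containing every point of $C$ on its boundary; in particular $s$ and $t$ share a face. For $m\ge 1$ we may take $B=I$ and let $y_m$ be the highest point at which $R$ meets $I$; the finger of $I$ above height $y_m$ is crossing-free and ends at the change point $t$. Pushing the red strand through $(0,y_m)$ off the top of this finger is a homotopy of the shadow that deletes one crossing and keeps $C$ a union of two simple arcs with the same change points. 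The resulting shadow has $m-1$ crossings, so by induction $s$ and $t$ lie on a common face $F'$; reinstating the crossing only subdivides the faces near the top of $I$, and one checks that the face carrying both $s$ and $t$ survives, so $s$ and $t$ remain cofacial.

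The main obstacle is exactly this last bookkeeping step: verifying that the face shared by $s$ and $t$ after deleting the outermost crossing is not split apart when the crossing is reinstated, and that the deletion can always be arranged to keep $R$ simple (this is why I would sweep the finger nearest a change point, where the arc being moved is embedded and crossing-free). An alternative route to the same lemma, which I would keep in reserve, is to cut the sphere open along $R$ to obtain a disk whose boundary is a doubled copy of $R$ with $s$ and $t$ as its two corner points; the arc $B$ then becomes a system of $m+1$ disjoint chords, and the claim turns into the combinatorial assertion that the two corners of this chord diagram always lie in a common complementary region. Once the lemma is in hand, the straightening of the first paragraph finishes the proof and delivers the explicit normal form described before the statement.
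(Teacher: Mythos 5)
There is a genuine gap, and it sits exactly at the reduction you make in your first paragraph. The lemma you need --- that whenever two simple arcs $B$ and $R$ meet precisely at their common endpoints $s,t$ and cross transversally in their interiors, the points $s$ and $t$ lie on a common face of $B\cup R$ --- is false. A two-crossing counterexample: let $R$ be the segment from $s=(0,0)$ to $t=(10,0)$, and let $B$ run from $s$ up and over to cross $R$ at $p_1=(5,0)$ from above, descend to $y=-2$, travel right to $x=12$, rise to $y=2$ (thereby encircling the endpoint $t$ without meeting $R$, which stops at $x=10$), come back left to $x=7$, descend to cross $R$ at $p_2=(7,0)$ from above, and finally reach $t$ from below by a short arc in $\{y\le 0\}$. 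This is a meander curve with two double points, hence exactly four faces: the two faces incident to $s$ are the rectangle bounded by the first lobe of $B$ and $R|_{[0,5]}$ and the unbounded face, while the two faces incident to $t$ are the lens between $R|_{[7,10]}$ and the final arc of $B$ and the bounded region enclosed by the large lobe of $B$. These are four distinct faces, so $s$ and $t$ are not cofacial. Your reserve formulation makes the failure transparent: the middle chord of $B$ leaves $p_1$ on one side of $R$ and arrives at $p_2$ on the other side (having gone around the corner $t$), so in the cut-open disk it joins the interiors of the two boundary arcs and separates the corner $s$ from the corner $t$; no complementary region meets both. Correspondingly, your induction breaks at precisely the step you flagged: reinstating the outermost crossing can split the face that carried both change points.

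Because the obstruction is real and the face structure is a planar-isotopy invariant, no argument that only isotopes the given meander diagram can succeed; the diagram must be changed. That is what the paper's proof does: it cuts the short arc of $D$ through $s$, picks a point $t'$ in a face adjacent to $t$, and reroutes the two loose ends along new simple arcs $A'$ and $B'$ to $t'$ (possible since neither $A$ nor $B$ separates the plane, with $A'$ disjoint from $A$ and $B'$ disjoint from $B$), assigning the new arcs heights above the rest of the diagram so that the knot type is preserved. The result has more crossings than $D$ but has change points $t$ and $t'$ on a common face by construction. Some such crossing-adding modification is unavoidable, and your proposal contains no mechanism for it.
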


\begin{proof}
We begin with an arbitrary meander diagram $D$ for a knot $K$. This consists of a pair of simple arcs $A$ and $B$,  running between the two change points $s$ and $t$. 

\begin{figure}[t]
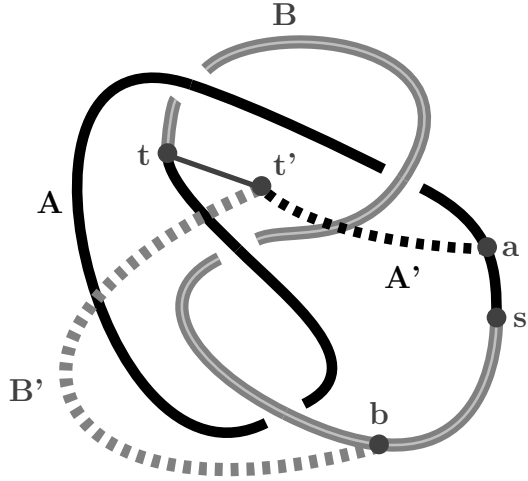

\centering
\standard{1.25}
\caption{Turning a meander diagram into a standard one}
\label{standard}
\end{figure}

Let $a$ be a point on $A$ close to $s$, $b$ a point on $B$ close to $s$, and $t'$ a point in a face adjacent to~$t$. Thus there exists a simple arc $tt'$ in the face running from $t$ to~$t'$. See Figure~\ref{standard}.

Neither $A$ nor $B$ separate the plane, so we can find simple arcs $A'$ and $B'$ running from $a$ to $t'$ and from $b$ to $t'$, with $A'$ disjoint from $A$ and $B'$ is disjoint from $B$. We can pick $A'$ and $B'$ to be disjoint from $tt'$ and also from the arc of $D$ between $a$ and $b$.

We now form a new diagram $D'$ by starting at $t$, and successively traversing $A, A', B', B$, so that the segment of $D$ between $a$ and~$b$ is replaced by the new arcs. The heights of strands of $K$ projecting to $A$ and $B$ are left unchanged. The height of~$A'$ is set to be larger than any point of $A$ and $B$ except for a vertical segment at its endpoint where it connects to $A$. The height of $B'$ is set to be greater still, above any point of $A$ and $B$ and $A'$ except for vertical segments at its endpoints where it connects to $B$ and $A'$. This choice of heights ensures that $D'$ also gives a diagram representing $K$. Moreover the diagram $D'$ is a standard meander diagram with arcs $A \cup A'$ and $B \cup B'$, with change points $t$ and $t'$ both meeting the face including the arc $tt'$.
\end{proof}

\section{Potholder Knots} 
\label{sec:potholder}

Recall, from the introduction to this paper, the definition of a potholder curve as in Figure~\ref{potholders}, and a potholder knot diagram, as in Figure~\ref{figureeight}. We now show that potholders are universal for knots.

\begin{theorem*}[\bf\ref{allpotholders}]
Any knot $K$ can be realized by a potholder knot diagram.
\end{theorem*}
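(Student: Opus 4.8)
The plan is to start from a standard meander diagram of $K$, which exists by Proposition~\ref{StandardMeander}, and to realize it inside a large potholder grid. Such a diagram is a straight vertical arc on the $y$-axis meeting a simple weaving arc transversally at finitely many points $p_1,\dots,p_m$, with both change points lying on a single face. Since a standard meander is already one straight strand crossed by a second strand, the idea is to spread its $m$ crossings onto the grid of a large potholder $P_n$ and to choose every extra crossing so that it becomes redundant, leaving a diagram representing $K$.

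The first step is to encode the combinatorics. Reading along the straight arc lists the crossings in one order, reading along the weaving arc lists them in another, and because both arcs are embedded, the pieces of the weaving strand lying between consecutive crossings form a non-crossing family of arches, each lying entirely on one side of the axis. I would then place the $m$ crossings into distinct rows and columns of the $(2n+1)\times(2n+1)$ array, for $n$ large, so that the column order matches the straight-arc order and the row order matches the weaving-arc order; this is just laying down a permutation pattern. The straight strand is routed along the columns, as the vertical snake, and the weaving strand along the rows, as the horizontal snake, so that each strand meets the crossings in exactly its meander order.

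The heart of the matter, and the main obstacle, is reconciling the \emph{rigid} turn-back pattern of an honest potholder --- whose two snakes fold back in a fixed, strictly alternating left--right manner at the boundary --- with the \emph{flexible} left/right nesting that the meander's connecting arches demand. I would resolve this by first drawing an intermediate diagram in which the connections between successive crossings are carried by disjoint wires that exit the grid and wrap around its outside, routed according to the non-crossing matchings of the meander; planarity guarantees that these wires can be drawn disjointly, and the standard-meander hypothesis lets both change points be pushed to the boundary so that they sit at two corners of the array, exactly as in a potholder. Using the spare even rows and columns of the enlarged grid, these wrap-around wires are then combed into the standard alternating turn-backs of a genuine potholder curve $P_n$, at the cost of introducing further crossings.

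The final step is to verify that every crossing outside the original $m$ can be made redundant: I would assign heights so that all strands created during the completion pass beneath everything else, so that the added arcs form a split, unknotted tangle while the essential crossings retain the over/under data read off from the meander, leaving the knot type $K$ unchanged. The delicate point is to carry out the combing and the height assignment \emph{simultaneously}, checking that the combing keeps the outer routing planar, that it does not disturb the $m$ essential crossings, and that the redundant strands can genuinely be lifted clear; this is where the embeddedness of the two meander arcs, the non-crossing structure of their arches, and the freedom to enlarge $n$ must be combined with care.
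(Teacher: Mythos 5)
Your overall architecture coincides with the paper's proof in Section~\ref{sec:potholder}: start from a standard meander diagram (Proposition~\ref{StandardMeander}), lay its $m$ crossings into the grid as a permutation pattern with one meander arc carried by the rows and the other by the columns, pass through an intermediate diagram whose connecting arcs wrap around the outside of the grid, and then argue that every grid crossing other than the $m$ essential ones can be chosen so as to be removable. Up to transposing the roles of rows and columns, this is the same decomposition, including the intermediate diagram $M=H\cup W$ of Figure~\ref{figMeanderToPotholder}.

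The gap is in your final step, precisely the one you flag as delicate. You propose to ``assign heights so that all strands created during the completion pass beneath everything else, so that the added arcs form a split, unknotted tangle.'' This mechanism does not apply: the completion strands are not a split tangle but subarcs of the two connected snakes, and the vertical snake is a single arc threading through $c_1,\dots,c_n$ with over/under data dictated by $K$. Between consecutive essential crossings that snake must lie alternately above and below the horizontal snake --- mirroring the way the meander's weaving arc alternates between the two sides of the axis --- so roughly half of the redundant crossings are forced to be over-crossings of the vertical strand, and the rerouting isotopies for consecutive subarcs occupy overlapping regions of the plane and would collide if all pushed to the same side. The paper's resolution is an explicit alternating, nested height assignment: the subarcs $V_i$ and $W_i$ between $c_i$ and $c_{i+1}$ share a constant height, positive and decreasing for $i$ even, negative and decreasing for $i$ odd, so that each isotopy $W_i\to V_i$ is confined to a half-plane ($x\ge i$ for $i$ even, $x\le i+1$ for $i$ odd), clears all vertical arcs there, and the traces of distinct isotopies are disjoint because they live at distinct heights. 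Some such explicit scheme is required; without it the redundancy of the extra crossings is asserted rather than proved. Two smaller omissions: the number of meander crossings must be made odd (by a Reidemeister~I move) so that the change points land at opposite corners of the grid, and the resulting $n\times(2n-1)$ rectangle must be padded to a square potholder by redundant vertical strands overcrossing all horizontals.
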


\begin{proof}
The proof has two steps. We first take a standard meander diagram $D$ for the knot $K$, using Proposition~\ref{StandardMeander}. We next show how to construct a potholder diagram that is equivalent to $D$.

After an isotopy of $D$ on the 2-sphere, we can assume that the two change points are both adjacent to the unbounded region of the complement of $D$ in the plane. We also isotope $D$ so that its two simple arcs $A$ and $B$ are positioned as follows. Arc~$A$ is the vertical segment from $s = (0,0)$ to $t = (0,n+1)$ on the $y$-axis. Arc~$B$ is a simple arc that also runs from $s$ to $t$, crosses $A$ at the points $\{(0,1), (0,2), \dots (0,n)\}$ and is otherwise disjoint from the $y$-axis. See Figure~\ref{figMeanderPot}. We can assume that the number of crossings $n$ in $D$ is odd, by adding an extra loop using a Reidemeister type~I move at $t$. We also require that the intersection of $B$  with a small neighborhood of $s$ lies in the right half-plane. The latter condition can always be achieved by interchanging the two colors, if needed.

\begin{figure}[b]
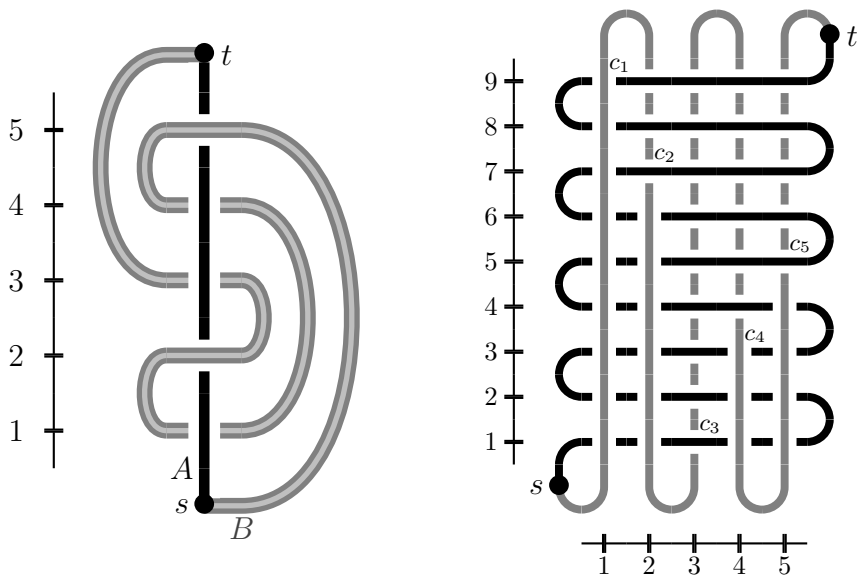

\centering
\meanderEDABC \;\;\;\;\;\;\;\; \potholderEDABC
\caption{A standard meander diagram and a corresponding potholder diagram of the figure-eight knot. The permutation is given by $\sigma:(1,2,3,4,5) \to (5,4,1,2,3)$.}
\label{figMeanderPot}
\end{figure}

Starting at $s$ and traversing $B$, we cross $A$ at the $y$-axis at $n$ points, in the order $(0,\sigma(1)), (0, \sigma(2)), \dots  , (0, \sigma(n))$ where $\sigma$ is a permutation of $\{1,\dots,n\}$. With our convention, $\sigma \in S_n $ uniquely determines the meander, although not all permutations give rise to a meander.

We next describe a  corresponding potholder diagram and later show that it defines a knot isotopic to $D$. We take $2n-1$ horizontal segments  on the lines $y = \{1, 2, \dots, 2n-1\}$ between $x=0$ and $x=n+1$, and $n$ vertical segments on the lines $x = \{1, 2, \dots, n\}$ between $y=0$ and $y=2n$. We connect these as in Figure~\ref{figMeanderPot} to form an $n \times (2n-1)$ rectangular potholder diagram $P$ with change points at $s=(0,0)$ and $t=(n+1,2n)$. One arc $H$ connects the change points and includes the horizontal segments, while $V$ also connects them and includes the vertical segments. We orient $H$ and $V$ so that they run from $s$ to $t$.

We now describe the choice of crossings for $P$. At each of the $n$ crossing points $c_i=(i, 2\sigma(i)-1)$ in~$P$, we let $H$ cross over~$V$ if $A$ crosses over~$B$ at $(0,\sigma(i))$ in~$D$, and cross under otherwise. At all other crossings $(i,j)$ in~$P$, choose $H$ to go under~$V$ if $j<2\sigma(i)-1$ and to go over~$V$ if $j>2\sigma(i)-1$. This causes the arc~$V$ to alternate between lying above~$H$ and lying below~$H$ as it passes through the crossings $c_1, c_2, \dots c_n$. See Figure~\ref{figMeanderPot}.

\begin{figure}[p]
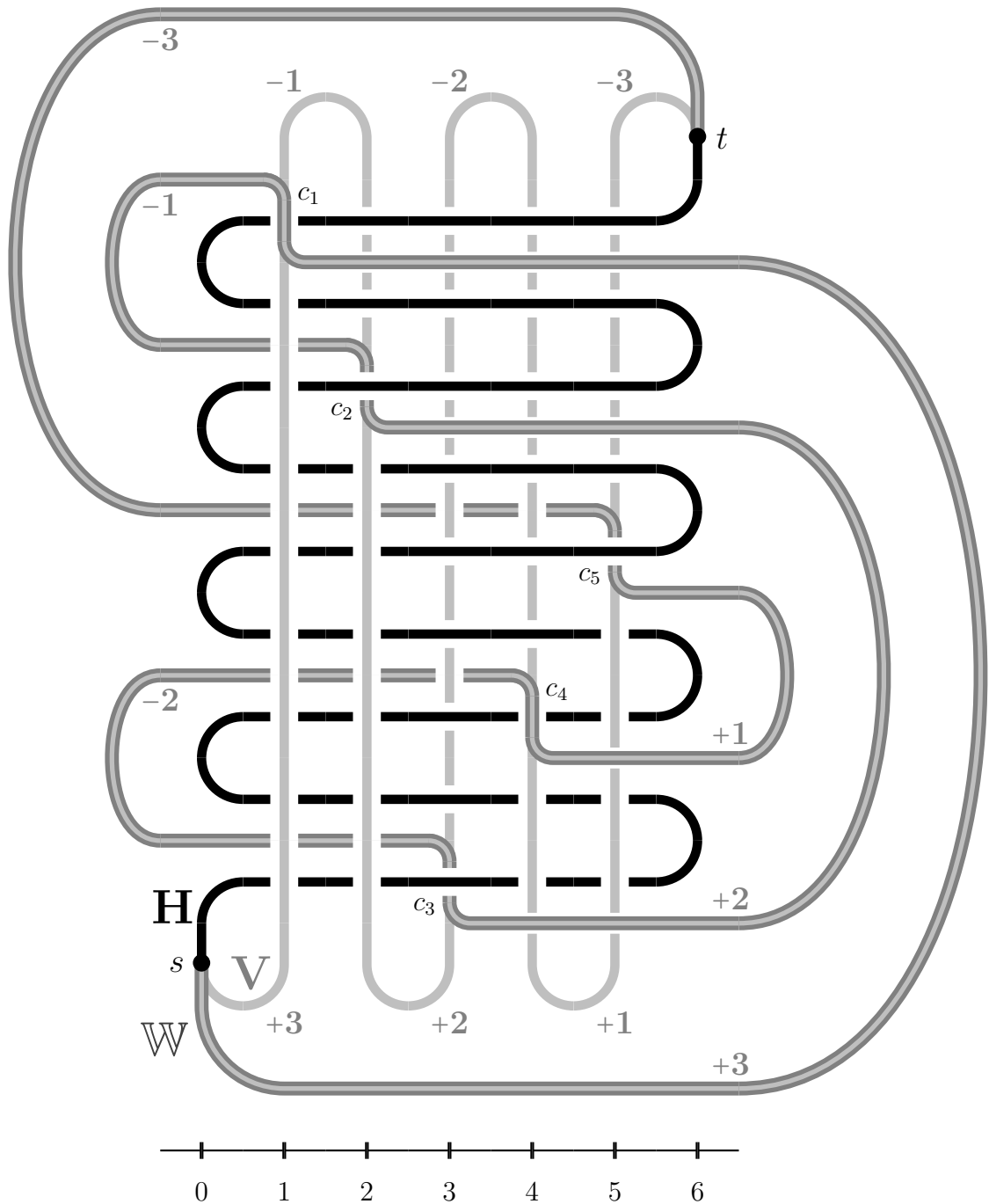

\centering
\intermediateEDABC
\caption{The meander diagram $M=H \cup W$ drawn over the potholder diagram $P= H \cup V$.
Numbers assigned to subarcs of $V$ and $W$ indicate their constant $z$-coordinates.}
\label{figMeanderToPotholder}
\end{figure}

In order to show that this potholder diagram realizes the correct knot, we introduce an intermediate knot diagram $M$, which is a meander knot diagram isotopic to $D$ and which also represents the same knot as $P$. One of the two meander arcs of $M$ coincides with $H$, the horizontal arc of $P$. The second meander arc $W$ runs between the two change points in $P$ as in Figure~\ref{figMeanderToPotholder}. 

The added arc $W$ starts at $s$, crosses~$H$ at the $n$ points $c_1,\dots,c_n$ in that order, and ends at $t$. Nearby to each crossing $c_i$ we continue the vertical arc of the crossing with two horizontal arcs, one with $y$-value just above the crossing, stretching left to $x=0$, and one with $y$-value just below the crossing, stretching right to $x=n+1$. We add an additional horizontal arc going to the left above $t$, and one going to the right below $s$. We then connect the $2n+2$ endpoints of these horizontal arcs with the same curved arcs as in the meander diagram $D$, after stretching these arcs so that their height increases by a factor of two. This completes the construction of~$W$.

Note that $D = A \cup B$ and $M = H \cup W$ are isotopic diagrams.  There is an isotopy of the plane that carries $H$ to the vertical arc $A$ and carries $W$ to $B$.  The construction of $M$ was chosen so that the crossings and the order in which they are reached agree in $D$ and $M$.

To finish the proof, we show that the diagram $M = H \cup W$ represents the same knot as $P = H \cup V$. We first denote by $W_i$ the subarc of $W$ between crossings $c_i$ and $c_{i+1}$, and by $V_i$ the subarc of $V$ between the same two crossings. This defines $W_0,\dots,W_n$ and $V_0,\dots,V_n$ where $c_0=s$ and $c_{n+1}=t$.

To construct an isotopy, we assign heights to each segment of $M$ and $P$. The arc $H$ will be fixed, and at height $z=0$ throughout the isotopy. We choose the $z$-value of $W$ to form a decreasing sequence of positive constant heights for the arcs $W_0, W_2, W_4,\dots,W_{n-1}$, and similarly a decreasing sequence of negative constant heights for $W_1, W_3, W_5, \dots, W_n$. Near each crossing $c_i$, these arcs connect with a vertical segment, chosen to be on the side of the crossing that gives the right choice of over- or under-crossing. Each subarc $V_i$ is assigned the same constant height as~$W_i$. See the heights indicated in Figure~\ref{figMeanderToPotholder}.
 
With this choice of heights, the arc $W_i$ is isotopic to the arc $V_i$ through an isotopy that keeps the height constant. The traces of these isotopies are at different heights and so are disjoint from one another. No isotopy crosses a vertical arc. Indeed, for $i$ even, the isotopy of the arcs $W_i$ and $V_i$ is contained in the half-space $x \ge i$ and lies above all crossings and all vertical arcs in this region. Similarly for $i$ odd, the isotopy of the arcs $W_i$ and $V_i$ is contained in $x \le i+1$ and lies below all crossings and all vertical arcs in that half-space.

In conclusion, the knot $K$ is represented by the $n \times (2n-1)$ potholder diagram~$P$. A square, $(2n-1) \times (2n-1)$ potholder diagram can be easily achieved, by adding redundant vertical arcs that over-cross all the horizontal ones.
\end{proof}

\section{Potholder Links}   
\label{sec:link}

In this section we show how the theory developed for knots extends to link diagrams. We show that all links, like knots, are carried by potholders.

\begin{theorem*}[\bf
\ref{alllinksmeanders}]
All links have meander diagrams.
\end{theorem*}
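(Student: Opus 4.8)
The plan is to mirror the two-step proof of Theorem~\ref{allmeanders}. First I would establish a link analogue of Lemma~\ref{Xmoves}: any collection of closed curves (a link projection) can be transformed by a sequence of $\mathfrak{X}$-moves into a \emph{link-meander curve}, that is, one whose arcs admit a black/red coloring in which each component is cut into at most two monochromatic arcs and every crossing is bichromatic. Second I would lift each $\mathfrak{X}$-move to an ambient isotopy in $\R^3$. The second step needs essentially no new work: the lift of an $\mathfrak{X}$-move in Figure~\ref{Xmovelifted} is supported in a neighborhood of a short arc and is blind to the rest of the diagram, so the four over/under cases that lift to isotopies in the knot case lift identically here and preserve the \emph{link} type. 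Hence the whole difficulty is concentrated in the combinatorial first step.

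For that step I would run the coloring/traversal algorithm of Lemma~\ref{Xmoves} one component at a time. Fix an ordering $C_1,\dots,C_k$ of the components. Processing $C_j$ means choosing a base point on it and growing a single red arc forward from that point, exactly as in the proof of Lemma~\ref{Xmoves}: the red arc is extended through a crossing only when that crossing is currently black-black (turning it red-black), and whenever the next crossing along the black part of $C_j$ is instead red-black we are ``blocked'' and perform the sequence of $\mathfrak{X}$-moves of Figure~\ref{xmoves} that eliminates the first black-black crossing lying ahead on $C_j$. Because the red arc is grown contiguously from the base point, at every stage each component carries at most one red arc and one complementary black arc, so the ``at most two arcs per component'' requirement is automatic.

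The key invariants to verify are that the process never creates a red-red crossing and that it terminates. The first holds because we extend red only through black-black crossings and never through a crossing whose transverse strand is already red; the $\mathfrak{X}$-move elimination is local, supported near the embedded black segment $c_1,\dots,c_k$, and, being insensitive to which component each strand belongs to, behaves exactly as in the knot case, reducing the number of black-black crossings by one while creating only red-black crossings. Taking the total number of black-black crossings (self-crossings and inter-component crossings alike) as a potential, each red extension and each elimination strictly decreases it, so after finitely many steps no black-black crossing remains; at that point every crossing is bichromatic and we have a link-meander curve.

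The main obstacle is precisely the inter-component crossings, which have no counterpart in the knot proof: a crossing between the black parts of two distinct components must be made bichromatic without splitting either component into a third arc and without ever producing a red-red crossing. The plan resolves this by observing that the elimination step of Lemma~\ref{Xmoves} only ever reddens a contiguous forward segment of the component currently being processed and reroutes it around the red strands it meets, so it treats an inter-component black-black crossing exactly as it treats a self black-black crossing. The remaining care is bookkeeping: once $C_j$ is finished its red arc is ``locked,'' and when a later component is processed, the crossings it makes with $C_j$'s red arc simply act as the ``blocked'' situations that trigger $\mathfrak{X}$-moves rather than red extensions. I expect verifying that this locking is consistent across all components, namely that the global black-black potential genuinely drops at every step and that no already-fixed crossing is disturbed, to be the step demanding the most attention.
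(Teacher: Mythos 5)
Your proposal is correct and follows essentially the same route as the paper, which proves Theorem~\ref{alllinksmeanders} by rerunning the algorithm of Lemma~\ref{Xmoves} with a red seed on each component and eliminating all black-black crossings (self- and inter-component alike) one by one via $\mathfrak{X}$-moves; the paper's proof is in fact far terser than yours and leaves the invariants you check (monotone decrease of the black-black count, no red-red crossings, locality of the lifted moves) implicit. The only cosmetic difference is that the paper seeds all components with red segments at the outset rather than processing them sequentially, which changes nothing in the argument.
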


\begin{proof}
The proof follows the same lines as the argument for knots in Lemma~\ref{Xmoves}. One starts with a completely black diagram except for small red segments in each component. Then all black-black crossings are eliminated one by one, using sequences of $\mathfrak{X}$-moves wherever it is necessary.
\end{proof}

The following notion of parallel arcs is required for defining which meander link diagrams are standard. Two immersed arcs $A$ and $B$ are {\em parallel} if they are the boundary of an immersed small $\varepsilon$-tubular neighborhood of a single immersed arc~$C$, up to isotopy. For example, the two closed curves on the right hand side of Figure~\ref{standardmeanderlink} are parallel, and so are the two open arcs of each color. 

\begin{figure}[b]
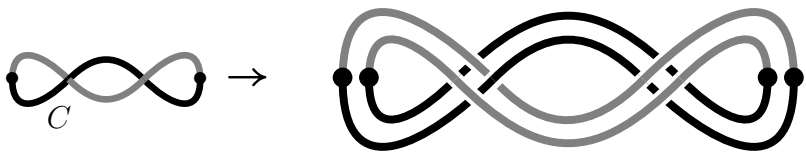
 
\centering
\meander{1.25}
\;\;\;
\whitehead{1}
\caption{A standard meander diagram of a 2-component link}
\label{standardmeanderlink}
\end{figure}

By taking $\varepsilon$ small enough, the two arcs are uniquely determined by~$C$, up to isotopy. We say in this setting that $A$ and $B$ are \emph{parallel copies of $C$}. Some simple but useful observations follow. Every intersection point between the original arc~$C$ and another arc~$D$, corresponds to exactly two intersection points between the two arcs and~$D$. A simple subarc of $C$ corresponds to a rectangular region, with subarcs of~$A$ and~$B$ as opposite sides. Every double point in~$C$ corresponds to four double points in $A \cup B$, at the corners of a square.

A collection of arcs $A_1, \dots, A_k$ is \emph{parallel} if $A_1$ is parallel to~$A_2$, $A_2$ is parallel to~$A_3$, and so on. Such arcs are also uniquely determined by a curve $C$. To be specific, take $A_1$ and~$B_1$ as two parallel copies of~$C$, then $A_2$ and~$B_2$ as two parallel copies of~$B_1$, and so on. An example of three parallel arcs appears in the potholder link diagram on the right hand side of Figure~\ref{linkmeander}.

We say that a $k$-component meander link diagram $D$ is {\em standard} if it is obtained from a standard meander curve 
$C$ by taking $k$ parallel copies of $C$ and making an arbitrary choice of overcrossing or undercrossing at each of the resulting crossings. 
One of the two simple arcs in $C$ is replaced by $k$ parallel arcs in $D$, which we can color red, 
and the other by $k$ parallel arcs which we color black. 
If $C$ has $n$ double points then $D$ has $nk^2$ crossings, each between different colors. See a standard meander diagram of the Whitehead link in Figure~\ref{standardmeanderlink}.

\begin{theorem}
\label{AllLinksStandardMeander}
All links have standard meander diagrams.
\end{theorem}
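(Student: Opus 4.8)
The plan is to realize $L$ as $k$ parallel copies of a single standard meander curve, reducing the link statement to a combination of the knot result in Proposition~\ref{StandardMeander} with the parallel-arc bookkeeping introduced just above. First I would apply Theorem~\ref{alllinksmeanders} to put $L$ into meander form, so that $L = R \cup B$ where the red part $R = R_1 \cup \dots \cup R_k$ and the black part $B = B_1 \cup \dots \cup B_k$ consist of one simple arc per component and every crossing is red--black. The goal is then to reorganize this diagram so that the $k$ red arcs become one parallel family and the $k$ black arcs become another parallel family; collapsing each family to its core would exhibit a single meander curve $C$ of which the whole diagram is $k$ parallel copies.

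The main construction is to build the core curve $C$ and comb the components onto it. I would first apply Proposition~\ref{StandardMeander} to each component individually and then concatenate the resulting standard meander curves into a single standard meander curve $C$ whose two change points share a face. Since a concatenation carries any knot carried by one of its blocks (making the remaining blocks trivial and pushing them under), $C$ carries each component $L_i$. Taking $k$ parallel copies of $C$ replaces each double point of $C$ by a $k \times k$ grid of crossings, exactly as in the parallel-arc observations above: a double point becomes a square of four crossings for two copies, and a $k \times k$ block for $k$ copies. The crossings on the diagonals of these grids are the self-crossings of the individual copies, and choosing them reproduces each $L_i$; the off-diagonal crossings are the inter-component crossings, and choosing them installs the clasping between components, using the red--black meander diagram of $L$ from the first step as the blueprint for which strands must over- or under-cross.

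The step I expect to be the main obstacle is the \emph{parallelization}: showing that the $k$ components can be simultaneously combed into parallel copies of the single shadow $C$ while preserving the link type, and that the newly created inter-component crossings can be chosen to recover exactly $L$ rather than merely the correct component knots and pairwise linking numbers. The delicate point is that the components begin as distinct meanders with distinct shadows, so making them genuine parallel copies of one curve forces a common core; I would handle this by routing every component along $C$ and absorbing its individual structure into the diagonal grid crossings, checking that the parallelizing isotopies, of the type producing the two parallel copies in Figure~\ref{standardmeanderlink}, can be performed disjointly with all heights under control. Finally, standardness of $C$ (both change points on a single face) follows as in the proof of Proposition~\ref{StandardMeander}: one reroutes the two families to a common adjacent face through disjoint parallel arcs $A'$ and $B'$, the parallel analogues of the single arcs used there, assigning heights so that the rerouting preserves the link type.
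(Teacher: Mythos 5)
Your proposal has a genuine gap at exactly the point you flag as the main obstacle, and the fix you sketch does not close it. You build the core curve $C$ by concatenating the standard meander curves of the \emph{individual components} and then take $k$ parallel copies, hoping to recover the link $L$ by choosing the off-diagonal crossings ``using the red--black meander diagram of $L$ \ldots as the blueprint.'' But that blueprint is a diagram with its own crossing pattern between components, and there is no argument that the inter-component crossings available in $k$ parallel copies of your concatenated $C$ --- which occur only in $k\times k$ blocks at the double points of $C$, in a rigidly prescribed combinatorial pattern --- can be assigned over/under data so as to reproduce the actual linking of $L$. Realizing each component's knot type on the diagonals and getting ``the correct pairwise linking numbers'' is far from realizing the link; the claim that some choice of off-diagonal crossings yields $L$ itself is essentially the whole content of the theorem, so your argument reduces the problem to itself. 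Discarding the global meander diagram of $L$ in favor of per-component standard meanders is where the linking information is lost.

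The paper's proof never decomposes the link. It keeps the full meander diagram of $L$ intact and performs explicit link-preserving modifications on it: first it reroutes the arcs near the change points (cutting each component near $s_i'$, $t_i'$ and extending the red and black arcs to prescribed positions along an auxiliary arc $A$, choosing every new crossing to be an over-crossing so that only an unknotted loop lying above the diagram is added); then it combs the black arcs pairwise into parallel position by pulling a ``finger'' of $B_1$ alongside $B_2$ inside the region $\Omega$ bounded by $B_1\cup B_2\cup S_{12}\cup T_{12}$, again with all new crossings chosen as over-crossings, and repeats for all black and then all red arcs. The core curve $C$ is the \emph{output} of this process, not an input chosen in advance, which is what guarantees the link type is preserved. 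To repair your argument you would need to replace the concatenation-plus-crossing-choice step with isotopies of this kind applied to the full diagram of $L$.
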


\begin{proof}
Let $D$ be a meander diagram for a link $L$ with $k$ components. Each component is composed of a black arc $B_i$ and a red arc $R_i$ for each $i \in \{1,\dots,k\}$. The two arcs forming each component meet at the change points $s_i$ and $t_i$. 
 
We start by showing that the change points can be arranged as in a standard meander diagram. This means that $s_1,\dots,s_k$ are successively adjacent to common faces, since they belong to parallel copies of a meander and correspond to its change point $s$. Similarly for $t_1,\dots,t_k$ which correspond to its other change point $t$. Since it should be a standard meander, with $s$ and $t$ adjacent to a common face, we show the same for $s_k$ and $t_1$. 

The following condition summarizes our requirement from the adjacency relations of the change points. There exists an oriented simple arc $A$ in the plane that meets the diagram transversely at the change points $s_1, s_2, \dots, s_k, t_1, t_2, \dots, t_k$ in that order, and is otherwise disjoint from $D$. Moreover, arcs of $D$ to the left of $A$ at $s_1, s_2, \dots, s_k$ are red, and arcs to the left at  $t_1, t_2, \dots, t_k$ are black. See Figure~\ref{arc}.

\begin{figure}[htbp]
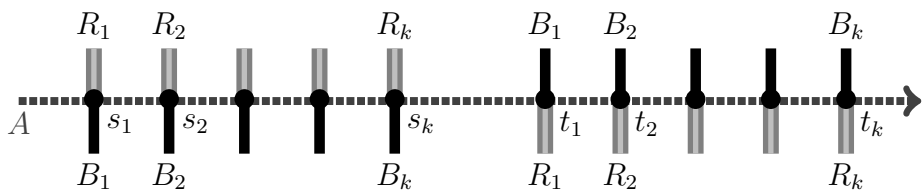
 
\centering
\arc{1}
\caption{The arc $A$ to be constructed}
\label{arc}
\end{figure}

We construct a meander diagram with such an arc $A$ for any link $L$. Let $D$ be any meander diagram of $L$ with change points $s_1',\dots,s_k'$ and $t_1',\dots,t_k'$. Start near~$s_1'$ and construct an arc~$A$ through~$s'_1$, with the red arc~$R_1$ to its left. Let $s_1 = s_1'$, and mark $2k-1$ successive points $s_2, \dots s_k, t_1, \dots t_k$ along that arc. We will change~$D$ to obtain a new meander link diagram with these change points, still representing~$L$.  

Cut the second component open near the given change point~$s_2'$, creating one endpoint on~$B_2$ and one on~$R_2$. Since the union of~$A$ and all the~$R_i$'s does not separate the plane, we can extend~$R_2$ to connect the endpoint of~$R_2$ to~$s_2$ from the left, without crossing a red arc or~$A$. For each new crossing point, where the red arc crosses a black arc, we choose the new red arc to go over the black arc. We then go back to the endpoint on~$B_2$ and notice that the black arcs together with~$A$ do not separate the plane, so we can extend~$B_2$ to meet~$s_2$ from the right, without crossing any black arcs. Here whenever a new crossing is created by the black arc meeting a red arc, we choose the black arc to go over the red.

The new diagram describes an equivalent link because it is obtained from the previous one by adding an unknotted loop starting and ending near $s_2'$ and lying completely above~$D$. This step is similar to the procedure described in the proof of Proposition~\ref{StandardMeander}, and illustrated in Figure~\ref{standard}.

Similarly, for each $i \in \{3,\dots,k\}$, the non-separating property of each color together with~$A$, allows us to successively cut the $i$th component near~$s_i'$ and extend the arcs~$R_i$ and~$B_i$ so that they meet at~$s_i$ with the red arcs to the left of~$A$ and the black arcs to the right. As before, we choose the new crossings to be over-crossings, so that we obtain an equivalent link diagram with no crossings between arcs of the same color.

We now deal with the remaining change points $ t_1', t_2', \dots, t_k'$. We again remove a small arc near~$t_1'$. The union of~$A$ and the red arcs still does not separate the plane. Thus the resulting endpoint of~$R_1$ near~$t_1'$ can be extended to meet~$t_1$ from the right, and without crossing any red arcs or~$A$. Similarly we can connect the black endpoint near~$t_1'$ to~$t_1$ from the left.

We have now created one loop in the union of~$A$ and the red arcs, because $A$ meets~$R_1$ in two points, and similarly in the union of~$A$ and the black arcs. So as we proceed to extend the subsequent arcs to $t_2,\dots,t_k$ we need to check that~$t_j'$ and both sides of~$t_j$ are in the same component of the plane after removing~$A$ and arcs of the same color.

Remove a small arc from~$R_2$ and~$B_2$ near $t_2'$. The loop~$O$ in $R_1 \cup A$ is formed by~$R_1$ and the part of~$A$ between~$s_1$ and~$t_1$. Orient the loop~$O$ to agree with the orientation of~$A$. The point~$t_2$ lies in the component to the left of~$O$, because of our choices for how~$R_1$ meets~$A$ at~$s_1$ and~$t_1$. The point~$t_2'$ also lies in the component to the left of~$O$, because all of~$R_2$ lies there. This component contains $R_2,R_3,\dots,R_k$ as disjoint arcs with one interior endpoint, and therefore these red arcs do not separate it. We can hence extend~$R_2$, this time from its end near~$t_2'$, to meet~$t_2$ from the right without introducing any crossings of red arcs. A similar argument applies to the arc~$B_2$ which can be extended to connect to~$t_2$ from the left.

We continue in this way, extending~$R_i$ and~$B_i$ to meet at~$t_i$ in the complement of the loops formed respectively by $A \cup R_{i-1}$ and by $A \cup B_{i-1}$. This gives a meander diagram with change points as in a standard diagram.

~

We now arrange for each collection of arcs $R_1,\dots,R_k$ and $B_1,\dots,B_k$ to be parallel as in a standard meander diagram. Put the midpoint of~$A$, a point lying between~$s_k$ and~$t_1$, at~$\infty$ and let $S_{12}$ and $T_{12}$ refer to the arcs of~$A$ between $s_1,s_2$ and $t_1,t_2$ respectively. Now consider the bounded region~$\Omega$, bounded by the arcs \mbox{$B_1 \cup B_2 \cup S_{12} \cup T_{12}$} which have disjoint interiors. All other black arcs $B_3 \dots B_k$ lie outside this region. Note that~$\Omega$ intersects $R_1, \dots, R_k$ in a potentially complicated pattern. We will isotope~$B_1$ and~$B_2$ in~$\Omega$ to give an equivalent link in which they will become parallel.

\begin{figure}[t]
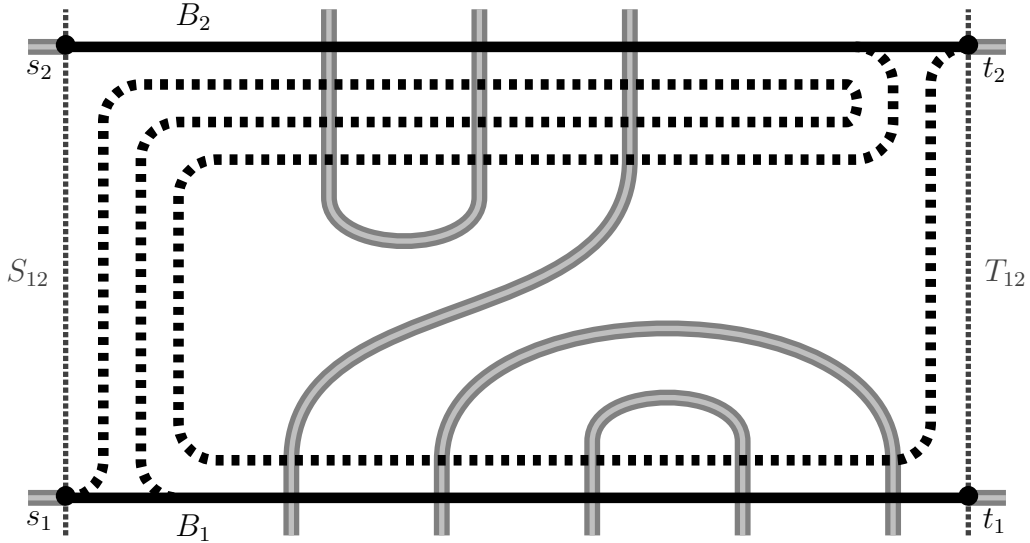

\centering
\parallelize{1}
\caption{making $B_1$ and $B_2$ parallel}
\label{parallelize}
\end{figure}

To do so, we proceed as in Figure~\ref{parallelize}. We cut~$B_1$ near~$s_1$. We extend~$B_1$ by going parallel to~$S_{12}$ until it almost reaches~$s_2$. It then runs parallel to~$B_2$ until it almost reaches~$t_2$, turns around and backtracks in parallel to rejoin~$B_1$ near~$s_1$. It crosses a red arc twice whenever~$B_2$ does, and we make each such crossing an overcrossing. Thus~$B_1$ is extended by pulling a small arc into a ``finger'' that lies above the rest of the link.

Now we cut~$B_2$ near~$t_2$. The extension of~$B_2$ starts by going parallel to~$T_{12}$ until just before it reaches~$t_1$. It then  turns around and follows~$B_1$ and its extension until it returns to the second point where~$B_2$ was cut. This extension of~$B_2$ goes around~$\Omega$ and crosses a red arc once whenever the original $B_1$ or~$B_2$ did. Again we make each such crossing an over-crossing. The resulting diagram gives an equivalent link, and the extended $B_1$ and~$B_2$ are now parallel.

We then isotope the arc~$B_3$ along with $B_1$ and~$B_2$ to get an equivalent meander link diagram in which all three are parallel. To do so we repeat the previous construction, but treating the parallel arcs $B_1, B_2$ as a single thickened arc. 

We repeat for all black arcs until we obtain an equivalent meander link diagram in which the~$k$ black arcs are parallel. We then repeat this construction for the red arcs $R_1,\dots,R_k$. In moving each red arc, we create new intersections with the black arcs but never destroy the property that the black arcs are parallel.

In conclusion, we have constructed an equivalent diagram of the given link~$L$ where the~$k$ components are parallel copies of a standard meander. This is a standard meander link diagram.
\end{proof}

The definition of a potholder diagrams of a link is similar to that of a standard meander. A~$k$-component \emph{potholder link diagram} is obtained by taking $k$ parallel copies of a potholder curve, and choosing the crossings arbitrarily. See the 3-component 3-by-3 potholder diagram in Figure~\ref{linkmeander}. We now show that such diagrams are universal for links.
 
\begin{theorem*}[\bf
\ref{alllinkspotholders}]
All links can be realized as potholder diagrams.
\end{theorem*}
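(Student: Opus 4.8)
The plan is to carry out the proof of Theorem~\ref{allpotholders} one level up, on the single curve underlying a parallel family. First I would apply Theorem~\ref{AllLinksStandardMeander} to replace $L$ by a standard meander link diagram: $k$ parallel copies of one standard meander curve $C$, with an arbitrary choice of crossings, all occurring between the $k$ parallel copies of one arc of $C$ and the $k$ parallel copies of the other arc. Coloring the first family red and the second black, these are the $nk^2$ red--black crossings that realize $L$. The goal is then to transform this configuration into $k$ parallel copies of a potholder curve without changing the link type.

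The key observation is that every move in the proof of Theorem~\ref{allpotholders} is either a planar isotopy or a height-constant isotopy supported near the curve, while parallelism of arcs is defined only up to isotopy through tubular neighborhoods. Hence any such move applied to $C$ automatically carries $k$ parallel copies of $C$ to $k$ parallel copies of the image. So I would first straighten $C$ into the position where one arc is the vertical segment and the other is the meandering arc (making the crossing number odd by adjoining a Reidemeister~I loop if needed), record the resulting permutation $\sigma$ of $\{1,\dots,n\}$, and then build the intermediate meander $M = H \cup W$ and the potholder $P = H \cup V$ exactly as in the knot case. The planar isotopy $C \to M$ carries the standard meander link diagram to $k$ parallel copies of $M$, still a standard meander diagram of $L$, in which the red copies run along $k$ parallel copies of $H$ and the black copies along $k$ parallel copies of $W$.

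The substantive step is to thicken the height argument showing that $M$ and $P$ carry the same knot. There each subarc $W_i$ is isotoped at a single constant height onto $V_i$, with even and odd indices placed at positive and negative heights so that the traces are disjoint and cross no vertical arc. For the link I would replace each constant height by a narrow band of $k$ distinct heights, one per parallel copy, chosen thin enough that the slabs used for different indices $i$ remain disjoint and still lie entirely above (for $i$ even) or below (for $i$ odd) every crossing and vertical arc in the relevant half-space. This simultaneously isotopes the $k$ copies of each $W_i$ onto the $k$ copies of $V_i$, so the $k$ parallel copies of $M$ and of $P$ are isotopic links. I would then declare the crossings of $P$ as in the knot case: at each essential crossing $c_i$ the $k^2$ sub-crossings are set to agree with the corresponding $k^2$ crossings of the standard meander link diagram, and at every auxiliary crossing all $k$ copies of $V$ are pushed to one side of all $k$ copies of $H$, according to whether $j < 2\sigma(i)-1$ or $j > 2\sigma(i)-1$, so that these crossings remain redundant.

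I expect the main obstacle to be confirming that the banded thickening genuinely yields $k$ \emph{parallel} copies of one potholder curve, rather than $2k$ unrelated arcs: one must check that no two $H$-copies cross, no two $V$-copies cross, and that every crossing lies between the two colors, so that the diagram meets the definition of a potholder link diagram. This reduces to verifying that the thickened isotopy respects the tubular-neighborhood description of parallelism, which is precisely where the narrowness of the height bands and the disjointness of the traces must be used. Granting this, $L$ is exhibited as $k$ parallel copies of the rectangular potholder $P$ realizing $L$; as in the knot case, adjoining redundant vertical arcs that over-cross all horizontal ones enlarges $P$ to a square potholder curve, so the $k$ components become parallel copies of a genuine potholder curve, completing the proof.
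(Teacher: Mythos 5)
Your overall strategy is the paper's: reduce to a standard meander link diagram via Theorem~\ref{AllLinksStandardMeander}, then run the construction of Theorem~\ref{allpotholders} on the core curve and argue that parallelism is preserved. But there is a genuine gap at the step you flag as ``the main obstacle,'' and it is not the one you identify. The problem is not a matter of choosing the height bands narrow enough or keeping the traces of the isotopies disjoint; it is a framing obstruction. The height-constant isotopy taking each subarc $W_i$ to $V_i$ is an isotopy of embedded arcs with fixed endpoints, but it does \emph{not} in general extend to an ambient isotopy of the plane fixing the tangent directions at $c_i$ and $c_{i+1}$: when $\sigma(i)>\sigma(i+1)$ the two arcs have different turning numbers, and the isotopy must create or destroy a kink, i.e.\ perform a Reidemeister~I move. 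For a single strand this is invisible in $\R^3$, which is why the knot proof never mentions it. For a band of $k$ parallel strands, each such kink becomes a full twist of the band, producing $k(k-1)$ crossings \emph{among strands of the same color}. These crossings violate the definition of a potholder link diagram, and no thinness assumption removes them, because a full twist of a band is a topological invariant of the framed arc, not an artifact of how the copies are drawn.

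The paper spends the second half of its proof on exactly this point, and that content is absent from your proposal. Two further ingredients are needed: first, a local move (Figure~\ref{twists}) that transports a full twist in a vertical band across a horizontal strand, so that twists can be slid along the underlying potholder curve and cancelled in pairs; second, a counting argument showing that the positive and negative full twists balance. Concretely, the isotopy $W_i\to V_i$ introduces a twist precisely when $i$ is a descent of $\sigma$, the sign of the twist depends on the parity of $i$ (since $W$ alternates sides of $H$), and every standard meander curve has equally many descents of each parity --- verified in the paper by induction on $J$-moves. Without these two steps your argument produces $k$ closed curves that wind around one another inside the vertical bands rather than $k$ parallel copies of a potholder curve, so the proof as proposed does not go through.
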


\begin{proof}
Let $L$ be an arbitrary link. By Theorem~\ref{AllLinksStandardMeander} $L$~has a standard meander diagram. This diagram $D$ can be obtained by taking $k$ parallel copies of a standard meander curve~$C$, with~$n$ crossings, and making an appropriate choice for the~$k^2n$ resulting over and under crossings. The construction of the change points in the proof of Theorem~\ref{AllLinksStandardMeander} shows that we can take $n$ to be odd.

As in the proof of Theorem~\ref{allpotholders}, we transform~$D$ into a potholder diagram, treating the~$k$ parallel components of~$D$ as a single curve everywhere except at the~$k^2n$ crossings that correspond to $c_1,\dots,c_n$ in that proof. The potholder construction and the isotopies carry over to a band of~$k$ parallel components, but this band acquires some full twists due to Reidemeister~I moves. We next show how to eliminate those twists from the resulting link diagram, so that the $k$ strands remain parallel.

\begin{figure}[t]
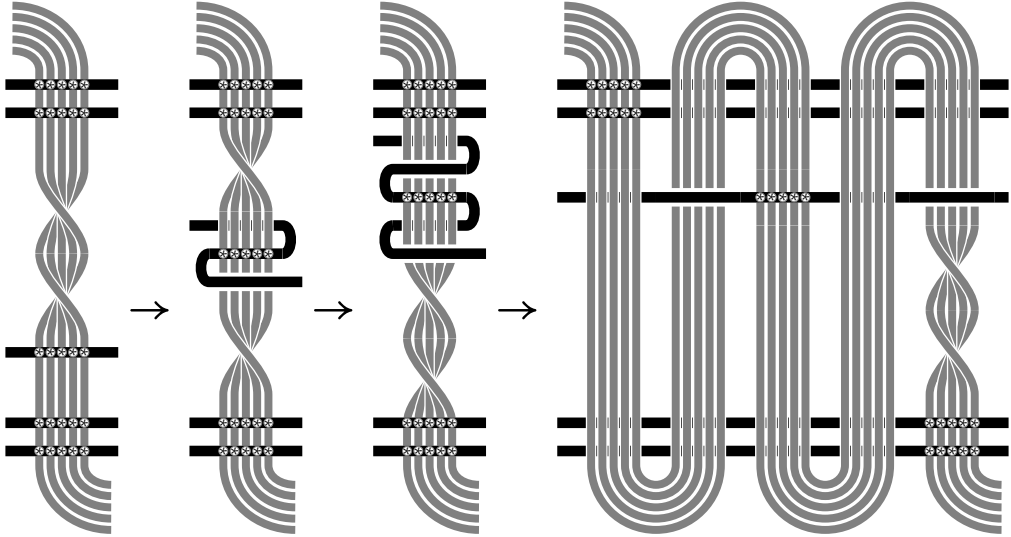

\centering
\twistA{0.75}
\raisebox{80pt}{\Huge $\to$}
\twistB{0.75}
\raisebox{80pt}{\Huge $\to$}
\twistC{0.75}
\raisebox{80pt}{\Huge $\to$}
\twistD{0.75}
\\ ~
\\ ~
\caption{Moving a twist in a vertical band of parallel strands across one horizontal strand while constructing a potholder link diagram. This operation can be applied with the initial crossings, those marked by small~$\circledast$'s, given arbitrarily.}
\label{twists}
\end{figure}

A closer look at the isotopy in the proof of Theorem~\ref{allpotholders} shows that Reidemeister~I moves are needed in two cases. Either they remove a negative loop,~\maxloop\; at some maximum point between two vertical segments of the potholder, or a positive loop,~\minloop\; at a minimum. As examples, see the pair of isotopic arcs labeled by $-1$ or the pair labeled by $+2$ in Figure~\ref{figMeanderToPotholder}. When applied to a band of $k$ strands, these moves create a negative twist at a maximum point,~\maxtwist\; and a positive twist at a minimum point,~\mintwist. Note that all the twists occur along the vertical bands. 

We now show how to transfer these twists along the underlying potholder curve, so that they cancel in pairs. Figure~\ref{twists} shows how one full twist in a vertical band can be moved across one horizontal strand. Using this move and its mirror images along the axes, one can transfer any number of twists across any horizontal strand along the way.

Finally, we note that there are equally many positive and negative full twists to cancel. With the notation of Figures~\ref{figMeanderPot}-\ref{figMeanderToPotholder}, observe that taking $W_i$ to $V_i$ introduces a twist if and only if $c_i$ occurs later than $c_{i+1}$ on $H$, or equivalently if $i$ is a \emph{descent} of $\sigma$ so that $\sigma(i) > \sigma(i+1)$. Since $W$ alternates between the two sides of~$H$, the twist is negative if the descent is odd, and positive if even. But every standard meander curve has an equal number of descents on each side, as can be verified by induction on $J$-moves,~$\parallel\,\,\leftrightarrow \mathlarger{\mathlarger{\between}}$.

\nopagebreak
We thus obtain a parallel $k$-component potholder link diagram representing the given link~$L$.
\end{proof}

\medskip
\section{Consequences} 
\label{sec:properties}

The fact that all knots and links have such simple representations as potholder diagrams is likely to find various applications in research. This is demonstrated in the current section.

\bigskip \noindent 
{\bf Closed 1-Pure Braids}

\nopagebreak \medskip \noindent
It was shown by Alexander~\cite{alexander1923lemma} that every knot can be realized by a closed braid. The universality of potholder knot diagrams implies a stronger version of this result, stating that all knots can be realized by a special class of braids.

Following Artin~\cite{artin1947theory}, we say that a braid on $k$ strands is {\em $1$-pure}, if $k-1$ strands of the braid are straight vertical arcs and the $k$th strand winds between them. See Figure~\ref{straightbraid}. Such braids form a normal subgroup of the pure braid group~$P_k$. The $1$-pure subgroup is isomorphic to a free group on $k-1$ generators, and the quotient isomorphic to~$P_{k-1}$. This decomposition plays an important role in computational problems on braids. 

Here we prove the following strong version of Alexander's Theorem. This uses the standard method of converting a pure braid into a knot. We connect the strands rotated by one, so that they form a single component.

\begin{corollary*}
[\bf \ref{StrongAlexander}]
All knots are closures of $1$-pure braids.
\end{corollary*}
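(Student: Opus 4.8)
The plan is to read a potholder diagram of $K$ directly as the closure of a $1$-pure braid, so that no further braiding work is needed. By Theorem~\ref{allpotholders}, $K$ has a potholder diagram $P = H \cup V$ as in Figure~\ref{figMeanderPot}, where $V$ consists of the $n$ straight vertical segments on the lines $x = 1, \dots, n$ (joined in a snake by short turnarounds along the top and bottom edges) and $H$ is the single arc made of the horizontal segments, which crosses every vertical segment at each of the $2n-1$ levels $y = 1, \dots, 2n-1$ and makes its own turnarounds at the left and right edges. The key structural fact is that every crossing of $P$ sits at a grid point and is a crossing of $H$ with one vertical segment: the vertical segments never meet one another.

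First I would take the braid height direction to be vertical. The $n$ vertical segments are already monotone in $y$, so they serve as the $n = k-1$ straight strands. A small isotopy tilts each horizontal pass of $H$ slightly upward and routes its level-to-level turnarounds monotonically up the left and right edges, outside the band $1 \le x \le n$; this makes $H$ a single strand that is monotone in $y$ while leaving the vertical segments straight and preserving all crossing data. Thus $P$ is exhibited as a braid on $n+1$ strands in which only the strand $H$ ever passes over or under another strand, which is exactly the $1$-pure condition depicted in Figure~\ref{straightbraid}.

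It remains to identify the closure. As the knot is traversed, the vertical segments alternate in direction, so the top and bottom turnarounds of the potholder give, a priori, a plat-type closure rather than a braid closure. I would repair this by reversing the orientation of the downward (even-indexed) vertical strands, which is a free operation for the unoriented knot. Under this reversal, each turnaround that joined the two tops (or two bottoms) of consecutive segments becomes an arc joining the top of one strand to the bottom of the next, so the entire closure becomes the \emph{rotated-by-one} closure that connects top-$i$ to bottom-$(i+1)$. Since the braid is $1$-pure and hence pure, composing its trivial permutation with the full cyclic rotation produces a single component, recovering $K$; a final planar isotopy sweeps the compact turnarounds around a braid axis placed off to one side, displaying the diagram honestly as a closed braid.

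The main obstacle is precisely this passage from the potholder's own orientation to a coherent braid orientation about a single axis. One must verify that reversing the alternate straight strands really converts the snake turnarounds into the rotated-by-one pattern, and that the resulting compact diagram is genuinely a closed braid, meaning that every strand and every closure arc winds monotonically around the chosen axis. Once the reorientation and the monotone routing of $H$ are in place, the $1$-pure property is immediate, since by construction the only strand that crosses any other is $H$.
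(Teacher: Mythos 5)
Your reading of the braid part is fine: in a potholder diagram $P=H\cup V$ the only crossings are between $H$ and the vertical segments, and $H$ can be tilted so that it runs monotonically upward, so the tangle inside the square is indeed a $1$-pure braid on the $n$ vertical strands together with $H$. The gap is in the closure. Reversing the orientation of the even-indexed vertical strands is not a geometric move: it does not relocate the turnaround arcs, which still physically join the top of one vertical segment to the top of the next (and bottom to bottom). A closed braid requires the whole knot to wind monotonically around an axis, and the potholder's snake of vertical strands goes alternately up and down no matter where the axis is placed; as the remark following the paper's proof of Corollary~\ref{StrongAlexander} points out, the potholder is naturally a \emph{plat} closure of a $1$-pure braid, not a braid closure, and these are genuinely different objects. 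Converting the plat into a braid closure means physically dragging every other vertical strand around the outside of the diagram so that it becomes a closure arc, and that sweep carries the strand across the entire rest of the diagram. Such a sweep preserves the knot type only if the dragged strand is \emph{consistent} --- entirely over-crossing or entirely under-crossing --- and the vertical strands produced by Theorem~\ref{allpotholders} are not: each has exactly one transition from over to under. Your ``final planar isotopy'' cannot supply this step, since a planar isotopy cannot carry an arc across other strands at all.

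This is precisely the step the paper's proof is organized around. It exploits the single over-to-under transition in each vertical strand to apply the $I\to N$ move of Figure~\ref{fig:ItoNmove}, enlarging the potholder so that every other vertical strand becomes consistent, and only then slides those consistent strands (which lie entirely above or entirely below everything they cross) around the outer face to reach the closed potholder braid curves of Figure~\ref{toBraid}, finishing with $k-1$ Reidemeister~II moves to standardize the bottom arc. Without an argument of this kind --- some mechanism that actually re-routes the turnarounds through three-space while controlling the crossings --- your proof does not go through.
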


\nopagebreak
\begin{proof}
Consider the sequence of {\em closed potholder braid} curves in Figure~\ref{toBraid}. Any knot that is carried by one of these curves is equivalent to the closure of a $1$-pure braid, and vice versa. Note that the bottom horizontal arc is introduced in passing from a braid to a closed knot. To put the bottom arc into a standard form, we perform $k-1$ Reidemeister~II moves, adding two horizontal arcs lying entirely over the vertical arcs they cross. In conclusion, it is sufficient to show that this sequence of curves is universal. 

\begin{figure}[bth]
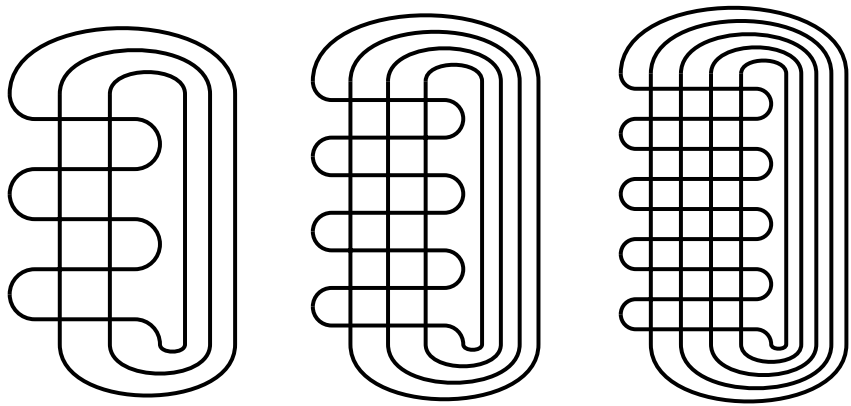

\centering
\trace{2}{0.666} \; \trace{3}{0.5} \;\;\; \trace{4}{0.4}
\caption{Potholder Braids}
\label{toBraid}
\end{figure}

Let~$K$ be an arbitrary knot. By Theorem~\ref{allpotholders}, $K$ can be represented by a potholder diagram. We say that an arc is {\em consistent} if all its crossings are over-crossings or all its crossings are under-crossings. We first construct a different, possibly larger, potholder diagram for $K$, in which the 1st, 3rd, 5th, and subsequent odd vertical segments are consistent.

To perform this transformation, we start with the construction of  Theorem~\ref{allpotholders}. Note that each  vertical strand traverses a sequence of over crossings followed by a sequence of under crossings. That is, there is a single change from over to under in each vertical strand. By adding a pair of additional vertical strands where needed, as in Figure~\ref{fig:ItoNmove}, we can transform the diagram to represent the same knot so that every other vertical strand is consistent.

\begin{figure}[tb]
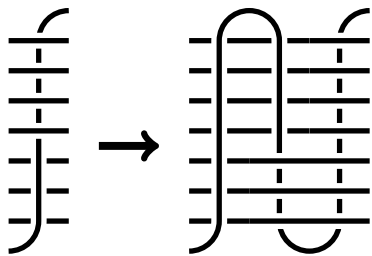
 
\centering
\iton{0.4}
\caption{An $I \to N$ move in a potholder diagram}
\label{fig:ItoNmove}
\end{figure}

All these vertical strands start and end next to the outer face of the diagram. Therefore, we can move all odd strands to the right and obtain a closed potholder braid diagram representing $K$, as in Figure~\ref{toBraid}.
\end{proof}

\noindent {\bf Remark.}
Regular potholders, as in Figure~\ref{potholders}, are already closed braids with another closure convention, sometimes referred to as the \emph{plat closure}. Thus it follows directly from Theorem~\ref{allpotholders} that also plat closures of 1-pure braids are universal for knots.

\bigskip \noindent
{\bf Even-Faces Diagrams}

\nopagebreak \medskip \noindent 
Adams, Shinjo and Tanaka study the combinatorial patterns arising from counting the number of sides of the complementary regions of a link diagram \cite{adams2011complementary}. They ask if links with $n$ components must have at least $n$ complementary regions with an odd number of sides. 

The realization of all links by potholder diagrams, stated in Theorem~\ref{alllinkspotholders}, gives a negative answer. 

\begin{corollary*}[\bf \ref{faces}]
Any $n$-component link has a diagram with exactly two odd-sided faces. Moreover, each of the two odd-sided faces can be specified to be a 1-gon  or a 3-gon.
\end{corollary*}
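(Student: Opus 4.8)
The plan is to start from the potholder realization and then read off the faces directly. By Theorem~\ref{alllinkspotholders} the given $n$-component link $L$ has a potholder link diagram $D$, built from $k=n$ parallel copies of a potholder curve. I would prove the corollary by showing that every face of $D$ is even-sided except for two, one at each change point, and then that each of these two can be independently adjusted by a local move.

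First I would inventory the faces away from the two change points. The bulk of $D$ is a grid of horizontal and vertical bands, each band consisting of $k$ parallel strands, meeting in $k\times k$ crossing blocks. Inside each block the faces are the $(k-1)^2$ small squares; between two consecutive blocks along a band the faces are quadrilaterals bounded by two parallel strands and one crossing from each adjacent block; and at each U-turn of a band the nested strands bound bigons together with the transverse band. All of these faces are $4$-gons or $2$-gons, hence even-sided. The outer face is even as well, which is forced once everything else is understood, since in any diagram the number of odd-sided faces is even, because $\sum_f \deg f = 2E$.

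The heart of the argument is the local analysis at a change point, where the two arcs of the meander join and the grid pattern breaks. Here the $k$ parallel copies close up around the corner crossing block. I would show, by tracing the strands exactly as in the proof of Theorem~\ref{allpotholders} (see Figure~\ref{figMeanderToPotholder}, and the $k=1$ model in Figure~\ref{figureeight}), that only the innermost copy closes into a tight loop meeting the rest of the diagram in a single crossing, producing one monogon; every other face created at the corner, namely the central square of the block and the annular regions between successive nested copies, is again a $4$-gon or a bigon. Thus each change point contributes exactly one odd face, a $1$-gon, so $D$ has exactly two odd-sided faces, in agreement with the parity constraint.

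Finally, to obtain the stated freedom between $1$-gons and $3$-gons, I would observe that a monogon can be converted into a triangle by a single Reidemeister~II move: push the smooth tip of the loop across a neighboring strand so that the loop crosses it twice. The two new crossings turn that $1$-gon into a $3$-gon, while the only new face created is a bigon, so all other parities, and in particular the second change point, are left untouched. Applying this to either, both, or neither corner realizes every prescribed combination of a $1$-gon or a $3$-gon at the two odd faces.

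I expect the main obstacle to be exactly the corner bookkeeping for general $k$: verifying that the $k\times k$ crossing block together with the $k$ nested turning arcs yields precisely one monogon and no other odd face, uniformly in $k$. The $k=1$ case is the clean picture, and a careful parallel push-off argument should show that increasing $k$ only inserts even-sided faces around the single surviving $1$-gon.
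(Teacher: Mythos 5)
Your overall strategy is the paper's: take the potholder link diagram from Theorem~\ref{alllinkspotholders}, observe that it has exactly two odd-sided faces, both $1$-gons located at the change points, and then modify each $1$-gon locally and independently. Your face inventory, including the corner bookkeeping for general $k$, is a more detailed version of what the paper simply asserts in one sentence, and that part is fine.

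The gap is in the final step. A Reidemeister~II move does not create ``only a bigon'': it adds two vertices and four edges, hence by Euler's formula exactly \emph{two} new faces. Besides the bigon, the finger you push from the tip of the loop across the neighbouring strand $b$ must pass through the face $F$ lying between the loop and $b$, and it cuts $F$ into two faces $F_1,F_2$. A degree count (total face degree increases by $8$; the $1$-gon$\,\to\,3$-gon accounts for $2$, the new bigon for $2$, and the face beyond $b$ for $2$) gives $\deg F_1+\deg F_2=\deg F+2$, so the two pieces have equal parity but nothing forces them to be even: if $F$ were, say, a $4$-gon with the loop tip and $b$ on opposite sides, both pieces would be $3$-gons and the diagram would end up with four odd faces. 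In the potholder the relevant $F$ is the outer face, so you would need to check how that particular split comes out, which you have not done. The paper sidesteps all of this by using a Reidemeister~I move instead: undoing the kink deletes the $1$-gon (its region merges into the adjacent outer face, whose degree drops by $2$, preserving parity) and merges two boundary edges of the single face opposite the kink at that crossing --- a $4$-gon of the grid --- turning it into a $3$-gon; no face is split, so no new odd faces can appear. Replacing your R-II step by this R-I step repairs the argument; as written, the claim that ``all other parities are left untouched'' is unjustified.
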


\begin{proof}
Every link has a potholder link diagram, and such diagrams have two odd-sided faces which are 1-gons. By a Reidemeister~I move, we can eliminate the 1-gon and turn a 4-gon into a 3-gon. All other faces maintain their parity.
\end{proof}

We note that Adams, Shinjo and Tanaka show in that paper how to embed an arbitrary link diagram inside a larger diagram whose faces are only 3-gons, 4-gons and 5-gons.  By following their construction starting with a potholder diagram, we obtain an explicit universal sequence with the same property.

\bigskip
\section{Zigzag Curves} 
\label{zigzag}

The {\em fence} curve is formed from an array of vertical strands, that go up and down while zigzagging right and left as in Figure~\ref{fence}. The open curves in this figure and the subsequent ones are assumed to be closed through the outer face. 

\begin{figure}[b]
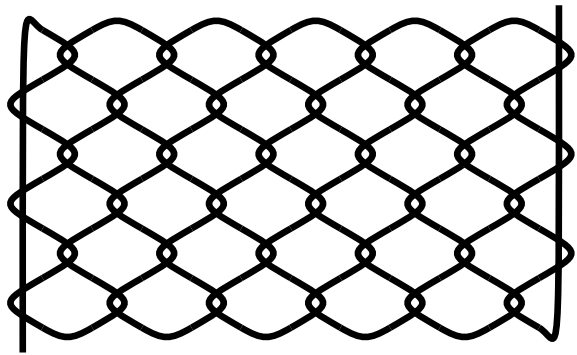

\centering
\fence{0.66}{10}{6}
\caption{The fence curve}
\label{fence}
\end{figure}

Consider a sequence of such curves, where both the width and the height go to infinity, such as the sequence of $n \times n$ fences. We find the following question very intriguing.

\begin{question*} 
Are fence curves universal?
\end{question*}

At first glance, fences appear similar to billiard and potholder curves, being composed of a simple periodic pattern that repeats in all cardinal directions. A~closer look suggests that fences might be more restrictive. Indeed, all crossings occur between two points at a short distance along the curve.

A more concrete manifestation of this restriction will be given in terms of the following definitions. The {\em Gauss word} of a planar curve is obtained by listing the crossings in their order of occurrence along the curve. For example, the word $abcdbadc$ corresponds to the figure-eight diagram in Figure~\ref{figureeight} on the left. Of course, this word is defined up to a cyclic ordering or renaming the points. A {\em subword} is obtained by omitting some of the crossings.

It is easy to see that a knot diagram whose Gauss word doesn't contain the subword $abab$ must represent the unknot. A diagram that avoids the subword $abcacb$ only yields connected sums of $(2,p)$ torus knots. One can verify that the fence curve never contains $abcabc$. This property may similarly reflect an obstruction to its universality.

Here is another form of our question. Can all knots be generated as plat closures of braids generated by the squares of the standard generators? This subgroup of braids was considered in \cite{pride1986tits,collins1994relations}. Their plat closures coincide with knots carried by a fence curve.

\medskip
We sampled some random knots by picking the over- and under-crossings in the fence curve independently at random. Using the software {\em SnapPy}~\cite{culler999snappy}, we identified the resulting knot types, see Appendix~\ref{experimental}. We were able to find quite easily fence representations for every knot with up to 8 crossings. However, eleven 9-crossing knots did not appear as fence diagrams. In Rolfsen's notation, these knots are $9_{29}$, $9_{34}$, $9_{35}$, $9_{37}$, $9_{38}$, $9_{39}$, $9_{41}$, $9_{46}$, $9_{47}$, $9_{48}$, and $9_{49}$. All other 9-crossing knots did occur, each appearing at least 500 times. This was based on over 150 million samples, with 50 to 200 crossings. The source code and results are available at~\cite{even2018diagram}.

As yet we do not have a convincing explanation for this phenomenon. Either, for some reason, these knots occur much less frequently than their 9-crossing counterparts, or they are not realized by fences. In either case it would be interesting to find a knot invariant, or a measure of complexity, that explains the observed difference.

\begin{figure}[tb]
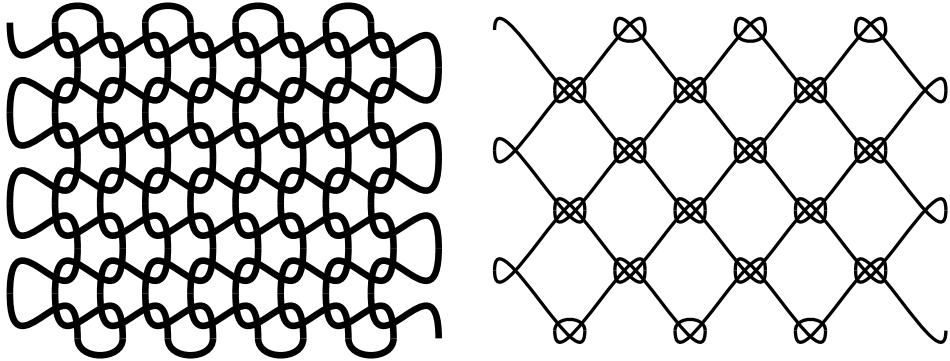

\centering
~ \\
~ \\
\knit{0.6}{8}{7} \;\;\;\;
\net{0.8}{6}{5}
\\ ~
\caption{The knit and the net}
\label{knitnet}
\end{figure}

\medskip
Patterns that are similar to the fence are found in knitted fabrics, fishing nets, and more. See Figure~\ref{knitnet}. 
One can verify that the knitted curve on the left is universal if and only if the fence is. The net curve on the right also avoids some subwords, such as $(abcdefg)^2$.

\bigskip 
\section{Questions} 
\label{questions}

This study of universal knot diagrams raises many additional questions aside from those mentioned above. Here we focus on two main directions for future research: the attempts to characterize universality, and its trade-off with efficiency.

\bigskip \noindent
{\bf Efficiency} 

\nopagebreak \medskip \noindent
As we saw in the introduction, the sequence $S_n$ of $(2n+1)$-star diagrams is universal, increasing, and $\sqrt{x/2}$-efficient. We wonder whether a universal increasing sequence of curves can have efficiency asymptotically bigger than $\sqrt{x}$. Concretely we ask:
\begin{question}
\label{f}
For how large $f:\N\to\R$ does there exist a universal, increasing, and $f(x)$-efficient sequence of curves?
\end{question}

Notions of efficiency make sense also for general families of curves. For example, the sequence~$D_n$ of all knot projections is trivially $x$-efficient, but is neither universal nor increasing. A crucial difference between these two families is that whereas $D_n$ has exponentially many curves of each size, $S_n$~has only one. Let us say that a set of curves has a {\em growth} function $g:\N\to\R$, if it has at most $g(x)$ curves with up to~$x$ crossings. The above discussion suggests the following question.

\begin{question}
\label{fg}
For what functions $f,g:\N \to \R$ does there exist an $f(x)$-efficient set of curves with growth $g(x)$, that represents all knots? Specifically,
\begin{itemize}
\item How small can the growth be, for an $\Omega(x)$-efficient set?
\nopagebreak
\item How high can the efficiency be, if the growth is $O(\text{poly}(x))$?
\end{itemize}
\end{question}

We stress a major difference between these two problems. Question~\ref{fg} applies to general sets of curves, not necessarily universal or increasing. In Question~\ref{f} growth is not a consideration, and we require instead that the universal set is increasing, or possibly, even more strongly, that it converges to one infinite curve. The following construction explains why.

Consider all the curves that are obtained as the connected sums of the $(2n+1)$-star $S_n$ and a knot projection with up to $2n^2$ crossings. This family is universal thanks to the large stars, but it is not increasing because of the other summand. It is $x/2$-efficient, because it contains every knot projection together with a star of comparable size. Clearly, universality and efficiency are combined here in an artificial way, and indeed this family has an exponential growth rate.

\medskip
The efficiency of universal sequences of curves, and the various relations between them give rise to a notion of complexity classes of curves.

\nopagebreak
Recall that we write $C \leq C'$ if every knot carried by~$C$ is also carried by~$C'$. Similarly for two sets of curves, let $\mathcal{U} \leq \mathcal{U}'$ if every knot carried by some~$C \in \mathcal{U}$ is also carried by some~$C' \in \mathcal{U}'$. Many of our arguments in this paper can be described in terms of this relation.

We say that $\mathcal{U}$ {\em reduces} to $\mathcal{U}'$ if $\mathcal{U} \leq \mathcal{U}'$ and this relation holds with polynomial efficiency. Namely, there is a polynomial $p(\cdot)$, so that every knot that is carried by some~$C \in \mathcal{U}$ with $n$ crossings is carried by some~$C' \in \mathcal{U}'$ with at most $p(n)$ crossings. 

Thus we can talk about complexity classes. For example, stars and billiard curves are in the same complexity class as the set of all curves. The cumulative connected sum has a higher complexity, since its efficiency is $\Theta(\log x)$. Meanders and potholders are in the same complexity class, but which class this is remains presently unknown.

\bigskip \noindent
{\bf Characterization}

\nopagebreak \medskip \noindent
Which sequences of planar curves are universal? We hope to find criteria that are based on simple properties of the curves.

In order to find necessary conditions for universality, one can consider some knot invariants. Let the {\em width} of a knot be the minimum, over its 3-space conformations, of its largest number of intersection points with a horizontal plane. Let a {\em height function} for a planar curve be any smooth real function on the plane with no critical point on a crossing. Let the {\em width} of a planar curve be the minimum, over height functions, of its largest number of intersection points with a level set. 

Observe that the width of a knot is at most the width of a curve that carries it. Indeed, given a height function, the curve can be appropriately lifted to realize any choice of crossings. As we show in Appendix~\ref{largewidth}, there are knots of arbitrarily large width. Therefore,
\nopagebreak
\begin{proposition*}
Every universal sequence of curves has widths that diverge to $\infty$.
\end{proposition*}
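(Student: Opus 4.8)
The plan is to argue by contradiction, using only the two ingredients just assembled: the monotonicity remark that the width of a knot is at most the width of any curve carrying it, and the fact (established in Appendix~\ref{largewidth}) that there are knots of arbitrarily large width. Let $\mathcal{U} = \{C_1, C_2, \dots\}$ be a universal sequence, and suppose its widths do \emph{not} diverge to $\infty$. Unwinding the definition, this means there is a constant $M$ and an infinite subsequence $C_{i_1}, C_{i_2}, \dots$ with $\mathrm{width}(C_{i_j}) \le M$ for every $j$.

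Next I would produce a single knot that is too wide for any of these curves. By the result of Appendix~\ref{largewidth}, choose a knot $K$ with $\mathrm{width}(K) > M$. Since $\mathcal{U}$ is universal, $K$ is carried by all but finitely many curves of $\mathcal{U}$; as the subsequence $C_{i_1}, C_{i_2}, \dots$ is infinite, all but finitely many of its members carry $K$, so at least one of them --- call it $C$ --- carries $K$ while still satisfying $\mathrm{width}(C) \le M$. Finally I would invoke the monotonicity remark: because $C$ carries $K$, we have $\mathrm{width}(K) \le \mathrm{width}(C) \le M$, contradicting the choice $\mathrm{width}(K) > M$. Hence no such bound $M$ exists, every constant is eventually exceeded by the widths of the curves in $\mathcal{U}$, and the widths diverge to $\infty$.

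The argument is a one-line pigeonhole pairing ``infinitely many curves of bounded width'' against ``only finitely many curves can miss a given knot,'' so essentially all of the content sits in the two imported facts. The main obstacle --- and the only step requiring genuine effort --- is therefore the construction in Appendix~\ref{largewidth} exhibiting knots of unbounded width; for this, any unbounded knot invariant that is dominated by width (for instance twice the bridge number, which is additive under connected sum and hence unbounded) would suffice. The monotonicity remark, which is the other load-bearing fact, is already justified in the text by lifting the curve according to a height function to realize any choice of crossings, so I would simply cite it.
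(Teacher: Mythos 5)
Your argument is correct and is essentially the paper's own proof: the paper derives the proposition in exactly this way from the monotonicity observation (width of a knot is at most the width of any carrying curve) together with the existence of knots of arbitrarily large width from Appendix~\ref{largewidth}, with universality ruling out an infinite bounded-width subsequence. Your write-up merely makes the contradiction explicit, and your closing remark correctly identifies the appendix construction as the only substantive ingredient.
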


 This notion of curves of large width seems related to planar graphs without very small separators in the sense of~\cite{lipton1979separator}. We presently don't know if any such condition yields universality. The $n \times n$ fence curve would be a useful test case. 

\medskip
There are various natural properties of planar curves which suggest themselves as sufficient conditions for universality. In particular one wonders what it is that underlies the universality of concrete sequences such as the potholders. For example, an $n \times n$ {\em weave} is the union of $n$ horizontal and $n$ vertical strands, such as ${\equiv}\!\!{|}\!\!{\equiv}\!\!{|}\!\!{\equiv}\!\!{|}\!\!{\equiv}$. In a key step at the proof of Theorem~\ref{allpotholders} we embed a meander knot in a large weave. It is conceivable that this is a special case of a more general phenomenon. Specifically we ask,

\begin{question}
Let $\mathcal{U}$ be a set of curves. Suppose that for every $n \in \N$, all but finitely many curves of $\mathcal{U}$ contain an $n \times n$ weave by some restriction to a disk. Is~$\mathcal{U}$ necessarily universal?
\end{question}
 
A positive answer would imply at once the universality of potholders, stars, billiard curves, and more. Still, it is not hard to see that containing a weave is not a necessary condition for universality.

Here is another potential extension of Theorem~\ref{allpotholders}. Recall the definition of Gauss words from Section~\ref{zigzag}. Call a word {\em bipartite} if every letter appears once in its first half and once in its second half. Note that a curve is a meander if and only if it has a bipartite Gauss word. Hence every knot has a diagram that avoids non-bipartite subwords.

One way to think of our proof is as embedding bipartite words as subwords in large enough potholders. This view suggests the following question.

\begin{question}
Let $\mathcal{U}$ be a set of curves. Suppose that every bipartite word is a subword of all but finitely many of the curves. Is $\mathcal{U}$ necessarily universal?
\end{question}

As far as we know, this condition may also be {\em necessary} for universality. To show this direction one could construct knots, all of whose diagrams must contain a given bipartite word. The question whether that is impossible seems very interesting on its own. Is there a bipartite subword that can be avoided by a suitable diagram of every knot?

\bigskip

\medskip
\appendix
\section{Large Width}
\label{largewidth}

\begin{proposition*}
The width of the $(p,q)$ torus knot is $2\min(p,q)$.
\end{proposition*}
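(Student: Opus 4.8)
The plan is to prove the two matching bounds $\mathrm{width}(T(p,q)) \le 2\min(p,q)$ and $\mathrm{width}(T(p,q)) \ge 2\min(p,q)$, and it is cleanest to recast everything through the bridge number. Assume without loss of generality that $p \le q$, so that $\min(p,q)=p$ and $T(p,q)=T(q,p)$. I would first recall the standard fact that the width defined here equals twice the bridge number $b(K)$: a bridge presentation with $b$ bridges places all $b$ maxima above all $b$ minima, so a generic horizontal plane between them meets the knot in exactly $2b$ points and no plane meets it in more; conversely, the minimal possible maximal strand count over all conformations is exactly $2b(K)$, the standard fact (proved by sliding maxima up and minima down and cancelling adjacent extremal pairs) relating width to the bridge number. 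Thus $\mathrm{width}(K)=2\,b(K)$, and it suffices to show $b(T(p,q))=p$.

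For the upper bound I would exhibit an explicit conformation on the standard torus of revolution
\[
(\theta,\phi)\mapsto\big((R+r\cos\phi)\cos\theta,\ (R+r\cos\phi)\sin\theta,\ r\sin\phi\big),
\]
placing $T(p,q)=T(q,p)$ as the curve $t\mapsto(\theta,\phi)=(qt,pt)$, $t\in[0,2\pi)$, which winds $p$ times around the meridian direction $\phi$. Its height is $z=r\sin(pt)$, so a generic horizontal plane $z=c$ with $|c|<r$ meets the knot in exactly the $2p$ solutions of $\sin(pt)=c/r$, while planes with $|c|\ge r$ meet it in fewer points. Hence the maximal strand count of this conformation is $2p$, giving $\mathrm{width}(T(p,q))\le 2p$ and equivalently $b(T(p,q))\le p$. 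The same bound follows from the fact that $T(p,q)$ is the closure of the $p$-strand braid $(\sigma_1\cdots\sigma_{p-1})^q$, which can be put in bridge position with $p$ bridges.

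For the lower bound I would invoke Schubert's classical theorem that the bridge number of the $(p,q)$ torus knot equals $\min(p,q)$; combined with $\mathrm{width}=2b$ this yields $\mathrm{width}(T(p,q))\ge 2p$, completing the proof. This lower bound is the main obstacle: the inequality $b(T(p,q))\ge p$ is exactly the deep half of Schubert's computation, whereas the easy direction only produces $b\le\min(p,q)$ from the braid or torus presentation used above. A self-contained argument would have to analyze how a level sphere intersects the standardly embedded torus carrying the knot and rule out any conformation with fewer than $p$ maxima, an essentiality argument of the kind underlying Schubert's original work; for the present purpose, where only the divergence of the width is needed, I would cite this result directly rather than reproduce it.
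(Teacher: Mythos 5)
The reduction to bridge number at the start of your argument is where the proof breaks. The width defined in the paper is $\min_{c}\max_{t}\,|c\cap\{z=t\}|$ over all conformations $c$ of the knot, i.e.\ Ozawa's \emph{trunk}, and only the inequality $\mathrm{width}(K)\le 2b(K)$ is true: a $b$-bridge position has maximal level-intersection $2b$. The reverse inequality, which is exactly what your lower bound needs, fails in general. For a connected sum such as the granny knot $T(2,3)\mathbin{\#}T(2,3)$ one can stack the two summands vertically, joined by two monotone strands, so that every horizontal plane meets the curve in at most $4$ points, whence $\mathrm{width}\le 4$; but $b=3$ by Schubert's additivity, so $2b=6>\mathrm{width}$. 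The ``slide maxima up, minima down'' argument shows that a conformation with $m$ maxima can be converted to an $m$-bridge position, so it bounds $b(K)$ from above by the number of maxima of any conformation; it does not bound the maximal strand count of a conformation from below by $2b(K)$, because a conformation can have many maxima staggered in height so that no single level sees more than a few strands. Consequently, citing Schubert's theorem $b(T(p,q))=\min(p,q)$ yields only the upper bound $\mathrm{width}\le 2\min(p,q)$ --- which you also obtain, correctly and exactly as the paper does, from the explicit parametrization on the torus of revolution --- while the lower bound, which is the entire content of the proposition, is not established.

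What is needed is an argument directly about level planes rather than about maxima. The paper's proof takes an arbitrary conformation of $T(p,q)$ lying on an embedded torus $T$, makes the height function on $T$ Morse, and repeatedly compresses innermost curves of $T\cap H$ (for a family $H$ of level planes separating the critical values) that bound disks in $T$; since $T$ has genus one, this process must terminate with a level plane containing a curve that is essential in $T$, hence isotopic in $T$ to a meridian or longitude at constant height. The knot meets that curve at least $p$ or $q$ times, all with the same sign, and an equal number of oppositely signed intersections with the same plane must occur elsewhere, giving at least $2\min(p,q)$ points in that level. If you prefer to cite rather than reprove, the correct reference for the lower bound is Ozawa's computation of the trunk of torus knots, not Schubert's computation of their bridge number.
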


\begin{proof}
The standard realization of the $(p,q)$ torus knot on a torus of revolution has $p$ minima and $p$ maxima in the direction of the axis of revolution. Hence it intersects any plane perpendicular to the axis in at most $2p$ points. Similarly the width is at most~$2q$. We show that this is tight.

Consider any smooth curve in $\R^3$ that realizes the $(p,q)$ torus knot. The curve lies on an embedded torus~$T$. After a slight perturbation, the height function on~$T$ can be taken to be a Morse function. Intersect $T$ with a fixed collection $H$ of horizontal planes, one between each pair of critical levels. These planes split $T$ into finitely many pieces that are glued along simple closed curves. Each piece is homeomorphic to a sphere with 1, 2, or 3 disks removed. 

Suppose that an innermost curve on one of these planes bounds a disk $E$ in~$T$. Then we eliminate this curve by an isotopy of $T$ in $S^3$, which fixes $T$ away from a small neighborhood of~$E$, and isotops~$E$ to lie on one side of the plane. This is possible since the two disks form a 2-sphere which bounds a ball on each side, one of which is disjoint from the rest of~$T$. This move eliminates at least one piece of~$T$ in the complement of the planes~$H$, and fills in a disk in the boundary of another piece. It preserves the property that each piece is a sphere with up to three disks removed.

These isotopies of $T$ in $S^3$ cannot make it disjoint from all planes in~$H$, as the pieces of $T$ in the complement of $H$ always have genus zero, while $T$ is a torus. Hence at some point we have a closed curve in $T \cap H$ that does bound a disk in the complement of~$T$, but doesn't bound a disk in~$T$. Such a closed curve is necessarily homotopic to either the meridian or the longitude of the torus. 

In conclusion, the given knotted curve lies on an embedded torus, that contains either a meridian or a longitude at a constant height. The knot intersects this meridian or longitude transversely at least $p$ or $q$ times, with the same orientation. Somewhere on that plane, we must have as many intersection points with the opposite orientation. Together there are at least $2\min(p,q)$ intersections at that height.
\end{proof}

\noindent {\bf Remark.}
An unbounded width for torus knots also follows from the work of Kuiper~\cite{kuiper1987new}. He studied a similar knot invariant $d(K)$ where one considers the largest number of intersections with \emph{any} transversal plane, rather than only horizontal ones. 
Ozawa~\cite{ozawa2010waist} studied these invariants and named them ``trunk'' and ``super-trunk''. His results together with the bridge numbers of torus knots also imply the above proposition.

\section{Fences}
\label{experimental}

The following table gives the count of prime knots occurrences in random fence diagrams, as identified by SnapPy~\cite{culler999snappy, even2018diagram}. See Section~\ref{zigzag}. The knots are denoted here by their Dowker--Thistlethwaite names.

\medskip
{ \small
\begin{multicols}{5}
\noindent
\texttt{K3a1	117898 \\
K4a1	330729 \\
K5a1	237325 \\
K5a2	47640 \\
K6a1	71746 \\
K6a2	144965 \\
K6a3	237764 \\
K7a1	72112 \\
K7a2	143917 \\
K7a3	44581 \\
K7a4	72321 \\
K7a5	37727 \\
K7a6	30722 \\
K7a7	7550 \\
K8a1	45124 \\
K8a2	8062 \\
K8a3	6645 \\
K8a4	44901 \\
K8a5	72382 \\
K8a6	20701 \\
K8a7	37816 \\
K8a8	20806 \\
K8a9	37376 \\
K8a10	44997 \\
K8a11	72592 \\
K8a12	576 \\
K8a13	3347 \\
K8a14	2417 \\
K8a15	2363 \\
K8a16	10525 \\
K8a17	37611 \\
K8a18	31239 \\
K8n1	16180 \\
K8n2	8113 \\
K8n3	3428 \\
K9a1	6824 \\
K9a2	6718 \\
K9a3	44811 \\
K9a4	16257 \\
K9a5	3298 \\
K9a6	4821 \\
K9a7	6762 \\
K9a8	44641 \\
K9a9	6584 \\
K9a10	44501 \\
K9a11	4822 \\
K9a12	20880 \\
K9a13	10400 \\
K9a14	10266 \\
K9a15	20670 \\
K9a16	5934 \\
K9a17	37954 \\
K9a18   0 \\
K9a19	20580 \\
K9a20	20740 \\
K9a21	37868 \\
K9a22	37649 \\
K9a23	5958 \\
K9a24	12398 \\
K9a25	916 \\
K9a26	11639 \\
K9a27	21922 \\
K9a28	0 \\
K9a29	0 \\
K9a30	0 \\
K9a31	0 \\
K9a32	0 \\
K9a33	4837 \\
K9a34	8680 \\
K9a35	9099 \\
K9a36	14551 \\
K9a37	622 \\
K9a38	4374 \\
K9a39	3839 \\
K9a40	0 \\
K9a41	830 \\
K9n1	16114 \\
K9n2	16241 \\
K9n3	6675 \\
K9n4	16017 \\
K9n5	0 \\
K9n6	0 \\
K9n7	0 \\
K9n8	0 \\
K10a*	542580 \\
K10n*	79705 \\
K11a*	546338 \\
K11n*	138160 \\
K12a*	555864 \\
K12n*	204192 \\
K13a*	540127 \\
K13n*	280692 \\
K14a*	523955 \\
K14n*	386622 \\
others	~155606675
}
\end{multicols}
}

\medskip
\section*{Acknowledgements}

\medskip\noindent\nopagebreak
This work was supported by BSF Grant 2012188. The numerical experiments were carried out with the facilities of the  School of Computer Science and Engineering at HUJI, supported by
ERC 339096. Work of J. Hass was also supported by NSF grant DMS-1758107.

\medskip
\bibliographystyle{alpha}
\bibliography{universal}

\end{document}